\definecolor{lightgray}{rgb}{0.8, 0.8, 0.8}
\definecolor{darkgray}{rgb}{0.7, 0.7, 0.7}
\definecolor{darkblue}{rgb}{0, 0, .4}
\newcounter{todocounter}
\theoremstyle{plain}
\newtheorem{theorem}{Theorem}[section]
\newtheorem{proposition}[theorem]{Proposition}
\newtheorem{corollary}[theorem]{Corollary}
\theoremstyle{definition}
\newtheorem{conjecture}[theorem]{Conjecture}
\newtheorem{question}[theorem]{Question}
\newfont{\footsc}{cmcsc10 at 8truept}
\newfont{\footbf}{cmbx10 at 8truept}
\newfont{\footrm}{cmr10 at 10truept}
\renewenvironment{abstract}%
                {
                  \begin{list}{}%
                     {\setlength{\rightmargin}{1in}%
                      \setlength{\leftmargin}{1in}}%
                   \item[]\ignorespaces\begin{small}}%
                 {\end{small}\unskip\end{list}}
\newcommand{\Av}{\operatorname{Av}}
\newcommand{\C}{\mathcal{C}}
\newcommand{\D}{\mathcal{D}}
\newcommand{\M}{\mathcal{M}}
\renewcommand{\P}{\mathcal{P}}
\newcommand{\R}{\mathcal{R}}
\renewcommand{\S}{\mathcal{S}}
\newcommand{\T}{\mathcal{T}}
\newcommand{\U}{\mathcal{U}}
\newcommand{\V}{\mathcal{V}}
\renewcommand{\O}{\mathcal{O}}
\newcommand{\Q}{\mathcal{Q}}
\newcommand{\X}{\mathcal{X}}
\newcommand{\Y}{\mathcal{Y}}
\newcommand{\Z}{\mathcal{Z}}
\newcommand{\fF}{\mathfrak{F}}
\newcommand{\gr}{\mathrm{gr}}
\newcommand{\lgr}{\underline{\gr}}
\newcommand{\ugr}{\overline{\gr}}
\newcommand{\zpm}{0/\mathord{\pm} 1}
\newcommand{\cell}[2]{a_{#1#2}}
\newcommand{\interval}[2]{[#1, #2]}
\newcommand{\Grid}{\operatorname{Grid}}
\newcommand{\Geom}{\operatorname{Geom}}
\newcommand{\st}{\::\:}
\newcommand{\bij}{\varphi}
\newcommand{\initcomp}{{\rm \#1}}
\newcommand{\emptyword}{\varepsilon}
\newcommand{\oplusprop}{D_{\mathord{\oplus}}}
\newcommand{\ominusprop}{D_{\mathord{\ominus}}}
\newcommand{\suplessthan}{\mbox{\begin{tiny}\ensuremath{<}\end{tiny}}}
\newcommand{\fnmatrix}[2]{\mbox{\begin{footnotesize}$\left(\begin{array}{#1}#2\end{array}\right)$\end{footnotesize}}}
\keywords{algebraic generating function, context-free language, permutation class, regular language, restricted permutation, simple permutation, substitution decomposition}
\title{\sc Inflations of Geometric Grid Classes of Permutations}
\author{%
Michael H. Albert\footnote{All three authors were partially supported by EPSRC via the grant EP/J006440/1.}\\[-0.25ex]
\small Department of Computer Science\\[-0.5ex]
\small University of Otago\\[-0.5ex]
\small Dunedin, New Zealand\\[1.5ex]
Nik Ru\v{s}kuc\footnotemark[\value{footnote}]\\[-0.25ex]
\small School of Mathematics and Statistics\\[-0.5ex]
\small University of St Andrews\\[-0.5ex]
\small St Andrews, Scotland\\[1.5ex]
Vincent Vatter\footnotemark[\value{footnote}]\footnote{The third author was also partially supported by the NSA Young Investigator Grant 12-1-0207.}\\[-0.25ex]
\small Department of Mathematics\\[-0.5ex]
\small University of Florida\\[-0.5ex]
\small Gainesville, Florida USA\\[-1.5ex]
}
\date{}
\begin{document}
\maketitle

\pagestyle{main}

\begin{abstract}
Geometric grid classes and the substitution decomposition have both been shown to be fundamental in the understanding of the structure of permutation classes.  In particular, these are the two main tools in the recent classification of permutation classes of growth rate less than $\kappa\approx2.20557$ (a specific algebraic integer at which infinite antichains begin to appear).  Using language- and order-theoretic methods, we prove that the substitution closures of geometric grid classes are partially well-ordered, finitely based, and that all their subclasses
have algebraic generating functions.  
We go on to show that the inflation of a geometric grid class by a strongly rational class is partially well-ordered, and that
all its subclasses have rational generating functions.  This latter fact allows us to conclude that every permutation class with growth rate less than $\kappa$ has a rational generating function.  This bound is tight as there are permutation classes with growth rate $\kappa$ which have nonrational generating functions.
\end{abstract}

\maketitle

\section{Introduction}\label{infinite-simples-intro}

The celebrated proof of the Stanley--Wilf Conjecture by Marcus and Tardos~\cite{marcus:excluded-permut:} establishes that all nontrivial permutation classes have at most exponential growth.  A prominent line of subsequent research  has focused on determining the possible growth rates of these classes.  In particular, Vatter~\cite{vatter:small-permutati:} characterised all growth rates up to
\[
\kappa=\mbox{the unique real root of $x^3-2x^2-1$}\approx 2.20557.
\]
The number $\kappa$ is the threshold of a sharp phase transition: there are only countably many permutation classes of growth rate less than $\kappa$, but uncountably many of growth rate $\kappa$. 
Furthermore, it is the first growth rate at which permutation classes may contain infinite antichains,
which in turn is the cause of much more complicated structure.  
For this reason we single out classes of growth rate less than $\kappa$ as \emph{small}. 
In this work we elucidate the enumerative structure of small permutation classes, essentially completing this research programme by proving that \emph{all small permutation classes have rational generating functions}.


Our work combines and extends two of the most useful techniques for analysing the structure of permutation classes: geometric grid classes and the substitution decomposition.  The conclusion about small permutation classes is obtained from a general enumerative result showing that the inflation of a geometric grid class by a strongly rational class is itself strongly rational.  This also places Theorem 3.5 of Albert, Atkinson, and Vatter~\cite{albert:subclasses-of-t:} in a wider theoretical context.  We introduce our results gradually, and along the way prove another structural result of independent interest: the substitution closure of a geometric grid class is partially well-ordered, finitely based, and all its subclasses have algebraic generating functions.  Thereby, we generalise one of the main theorems of Albert and Atkinson~\cite{albert:simple-permutat:} to a more natural and applicable setting.

For the rest of the introduction, we give just enough notation to motivate and precisely state our main results.  Sections~\ref{sec-inflate} and \ref{sec-geomgrid} offer a more thorough review of the substitution decomposition and geometric grid classes.  Our new results are proved in Sections~\ref{sec-infinite-finite-bases-pwo}--\ref{sec-spc-rational}, and Section~\ref{sec-conclusion} concludes by outlining several directions for further investigation.

Given permutations $\pi$ and $\sigma$, we say that $\pi$ \emph{contains} $\sigma$, and write $\sigma\le\pi$, if $\pi$ has a subsequence $\pi(i_1)\cdots\pi(i_k)$ of length $k$ which is order isomorphic to $\sigma$; otherwise, we say that $\pi$ \emph{avoids} $\sigma$.  For example, $\pi=391867452$ (written in list, or one-line notation) contains $\sigma=51342$, as can be seen by considering the subsequence $\pi(2)\pi(3)\pi(5)\pi(6)\pi(9)=91672$.  A \emph{permutation class} is a downset, say $\C$, of permutations under this order; i.e., if $\pi\in\C$ and $\sigma\le\pi$, then $\sigma\in\C$.  

For any permutation class $\C$ there is a unique, possibly infinite, antichain $B$ such that
\[
\C=\Av(B)=\{\pi: \pi \not \geq\beta\mbox{ for all } \beta \in B\}.
\]
This antichain $B$, which consists of all the minimal permutations \emph{not} in $\C$, is called the \emph{basis} of $\C$. 
If $B$ happens to be finite, we say that $\C$ is \emph{finitely based}.
 For $n\in\mathbb{N}$, we denote by $\C_n$ the set of permutations in $\C$ of length $n$, and we refer to
\[
\sum_{n=0}^\infty |\C_n|x^n=\sum_{\pi\in\C} x^{|\pi|}
\]
as the \emph{generating function} of $\C$ (here $|\pi|$ denotes the length of the permutation $\pi$).  Since proper permutation classes are of exponentially bounded size, they have associated parameters of interest related to their asymptotic growth. Specifically, every class $\C$ has \emph{upper} and \emph{lower growth rates} given, respectively, by
\[
\ugr(\C)=\limsup_{n\rightarrow\infty}\sqrt[n]{|\C_n|}
\quad\mbox{and}\quad
\lgr(\C)=\liminf_{n\rightarrow\infty}\sqrt[n]{|\C_n|}.
\]
It is conjectured that the actual limit of $\sqrt[n]{|\C_n|}$ exists for every permutation class; whenever this limit is known to exist we call it the \emph{(proper) growth rate} of $\C$ and denote it by $\gr(\C)$.  (All growth rates mentioned in the first paragraph are proper growth rates.)

A partially ordered set (poset for short) is said to be \emph{partially well-ordered (pwo)} if it contains neither an infinite strictly descending sequence nor an infinite antichain.  In permutation classes, pwo is synonymous with the absence of infinite antichains, since they cannot contain infinite strictly decreasing sequences .

Geometric grid classes are the first of our major tools, and may be defined as follows.  Suppose that $M$ is a $\zpm$ matrix.  The \emph{standard figure} of $M$ is the point set in $\mathbb{R}^2$ consisting of:
\begin{itemize}
\item the line segment from $(k-1,\ell-1)$ to $(k,\ell)$ if $M_{k,\ell}=1$ or
\item the line segment from $(k-1,\ell)$ to $(k,\ell-1)$ if $M_{k,\ell}=-1$.
\end{itemize}
The \emph{geometric grid class} of $M$, denoted by $\Geom(M)$, is then the set of all permutations that can be drawn on this figure in the following manner.  Choose $n$ points in the figure, no two on a common horizontal or vertical line.  Then label the points from $1$ to $n$ from bottom to top and record these labels reading left to right.  An example is shown in Figure~\ref{fig-example-ggc}.  Note that in order for the cells of the matrix $M$ to be compatible with plots of permutations, we use Cartesian coordinates for our matrices, indexing them first by column, from left to right starting with $1$, and then by row, from bottom to top.

A permutation class is said to be \emph{geometrically griddable} if it is contained in a geometric grid class.  These classes are known to be well behaved:

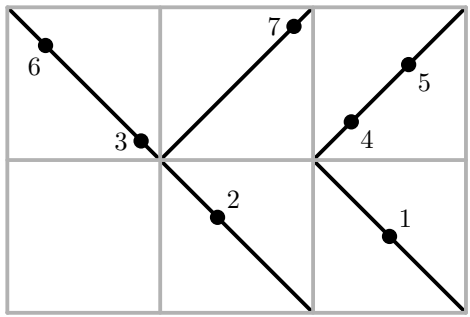
\begin{figure}
\begin{center}
\psset{xunit=0.02in, yunit=0.02in}
\psset{linewidth=0.005in}
\begin{pspicture}(-3,-3)(120,80)
\psline[linecolor=black,linestyle=solid,linewidth=0.02in](0,80)(40,40)
\psline[linecolor=black,linestyle=solid,linewidth=0.02in](40,40)(80,0)
\psline[linecolor=black,linestyle=solid,linewidth=0.02in](40,40)(80,80)
\psline[linecolor=black,linestyle=solid,linewidth=0.02in](80,40)(120,0)
\psline[linecolor=black,linestyle=solid,linewidth=0.02in](80,40)(120,80)
\psline[linecolor=darkgray,linestyle=solid,linewidth=0.02in]{c-c}(0,0)(0,80)
\psline[linecolor=darkgray,linestyle=solid,linewidth=0.02in]{c-c}(40,0)(40,80)
\psline[linecolor=darkgray,linestyle=solid,linewidth=0.02in]{c-c}(80,0)(80,80)
\psline[linecolor=darkgray,linestyle=solid,linewidth=0.02in]{c-c}(120,0)(120,80)
\psline[linecolor=darkgray,linestyle=solid,linewidth=0.02in]{c-c}(0,0)(120,0)
\psline[linecolor=darkgray,linestyle=solid,linewidth=0.02in]{c-c}(0,40)(120,40)
\psline[linecolor=darkgray,linestyle=solid,linewidth=0.02in]{c-c}(0,80)(120,80)
\pscircle*(10,70){0.04in} 
\uput[240](10,70){$6$}

\pscircle*(35,45){0.04in} 
\uput[180](35,45){$3$}

\pscircle*(55,25){0.04in} 
\uput[45](55,25){$2$}

\pscircle*(75,75){0.04in} 
\uput[180](75,75){$7$}

\pscircle*(90,50){0.04in} 
\uput[-45](90,50){$4$}

\pscircle*(100,20){0.04in} 
\uput[45](100,20){$1$}

\pscircle*(105,65){0.04in} 
\uput[-45](105,65){$5$}

\end{pspicture}
\end{center}
\caption[An example of a geometric grid class.]{The permutation $6327415$ lies in the geometric grid class of the matrix
$$
\fnmatrix{rrr}{-1&1&1\\0&-1&-1}.
$$}
\label{fig-example-ggc}
\end{figure}

\begin{theorem}[Albert, Atkinson, Bouvel, Ru\v{s}kuc and Vatter~\cite{albert:geometric-grid-:}]
\label{thm-geom-griddable-all-summary}
Every geometrically griddable class $\C$ is finitely based, pwo, and has a rational generating function.
\end{theorem}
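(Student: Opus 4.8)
The plan is to reduce everything to geometric grid classes themselves. A geometrically griddable class is, by definition, a subclass of some $\Geom(M)$, and partial well-order is inherited by subclasses automatically (a sub-quasi-order of a well-quasi-order is again one), so it suffices to prove that for every $\zpm$ matrix $M$ the class $\Geom(M)$ is pwo and that every subclass of $\Geom(M)$ is finitely based and has a rational generating function. The engine is a \emph{word encoding}. After a preliminary refinement lemma — every $\zpm$ matrix $M$ may be replaced by a larger one $M'$ with $\Geom(M)=\Geom(M')$ whose ``cell graph'' (the bipartite graph on rows and columns, one edge per nonzero entry) is a forest and whose signs admit a consistent orientation — one builds a finite alphabet $\Sigma$, with a bounded number of letters per cell, together with a surjection $\varphi\colon\Sigma^\ast\twoheadrightarrow\Geom(M)$ under which a word of length $n$ encodes a permutation of length $n$ by placing one point, for each letter, in the cell that the letter names, the points being read off in an order dictated by the forest structure; the forest-plus-orientation hypothesis is exactly what makes such a consistent reading order exist. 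The key property of $\varphi$ is monotonicity for the subword order: if $v$ is a subword of $w$ then $\varphi(v)\le\varphi(w)$ as permutations. Granting this, an infinite antichain of $\Geom(M)$ would, after choosing one preimage word per member, give an infinite set of words no one of which is a subword of another, contradicting Higman's lemma (the subword order on $\Sigma^\ast$ is a well-quasi-order since $\Sigma$ is finite). Hence $\Geom(M)$, and every subclass of it, is pwo.

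For the finite basis, observe that the basis of any subclass $\C\subseteq\Geom(M)$ consists of permutations every proper subpermutation of which lies in $\C\subseteq\Geom(M)$; deleting any one point from such a permutation therefore lands in $\Geom(M)$, so the basis is an antichain inside the class $\Geom(M)^{+}$ obtained by taking all one-point extensions of members of $\Geom(M)$ and closing downward. But $\Geom(M)^{+}$ is pwo, since it is contained in the union, over the finitely many combinatorial ways of inserting one new row and one new column into $M$, of the corresponding geometric grid classes, and a finite union of pwo classes is pwo. An antichain in a pwo set is finite, so $\C$ is finitely based; in particular, every geometrically griddable class is finitely based.

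For the generating function, I would show that for each fixed permutation $\beta$ the set $\{w\in\Sigma^\ast : \varphi(w)\text{ contains }\beta\}$ is a regular language: a nondeterministic automaton can scan $w$ from left to right, guessing the letters — hence cells — and the relative order of the points of a would-be occurrence of $\beta$ and verifying consistency as it goes, using only finitely many states because $\beta$ is fixed and $M$ has finitely many cells. Consequently, for any finite set $B$ of permutations, $\{w : \varphi(w)\in\Av(B)\}$ is regular, being a finite intersection of regular languages; taking $B$ to be the (now known to be finite) basis of a subclass $\C\subseteq\Geom(M)$ shows that the language of words encoding $\C$ is regular. Passing to canonical representatives — for geometric grid classes the fibres of $\varphi$ are governed by a finite rewriting system, so the lexicographically least word in each fibre forms a regular language — yields a regular language in length-preserving bijection with $\C$, and a regular language has a rational census generating function (invert $I-xA$ for the transition matrix $A$). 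Applying this with $\C$ any geometrically griddable class, viewed inside a $\Geom(M)$ that contains it, completes the proof.

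The main obstacle is the construction of the encoding $\varphi$ and the verification of its two good properties — subword-monotonicity (for pwo) and regularity of pattern containment (for the generating function) — since both genuinely use the geometric hypothesis and fail for general monotone grid classes; the refinement lemma yielding a forest cell graph with a consistent orientation is the other substantial ingredient. The remainder is assembly: Higman's lemma, the closure properties of regular languages and of pwo classes, and the passage to canonical words.
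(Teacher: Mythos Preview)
This theorem is quoted from~\cite{albert:geometric-grid-:} rather than proved in the present paper; Section~\ref{sec-geomgrid} only summarises the construction (Propositions~\ref{prop-geom-pmm}--\ref{prop-properties-of-bij}, Theorem~\ref{thm-geom-griddable-all}), and your sketch broadly follows that approach. There is, however, one substantive error. Your ``preliminary refinement lemma'' is not the right one: the refinement is \emph{not} to a matrix whose row-column graph is a forest, but to a \emph{partial multiplication matrix} (Proposition~\ref{prop-geom-pmm}), obtained by doubling $M$ to $M^{\times 2}$ so that every nonzero entry equals $c_k r_\ell$ for some column and row signs $c_k,r_\ell\in\{\pm 1\}$. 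It is these signs --- not any forest structure --- that determine the base point of each cell and hence the consistent reading order for the encoding $\varphi$. The forest condition on the row-column graph is a separate and much stronger hypothesis (it characterises when $\Geom(M)=\Grid(M)$; see Section~\ref{sec-spc-rational}), and it is neither needed nor in general obtainable for an arbitrary geometric grid class, so as stated your construction of $\varphi$ does not get off the ground.

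Your regularity argument is correct but heavier than necessary. Since $\varphi$ is order-preserving and any permutation class $\C$ is downward closed, the preimage $\varphi^{-1}(\C)\subseteq\Sigma^\ast$ is subword-closed and hence regular immediately --- there is no need to build an automaton for each basis element, and in particular no need to invoke the finite basis at this stage. The remaining steps (Higman's lemma for pwo; $\Geom(M)^{+1}$ geometrically griddable, hence pwo, for the finite basis, as in Theorem~\ref{thm-geom-griddable-one-point-extension}; canonical representatives for the length-preserving bijection) match the approach of the cited paper.
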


Theorem~\ref{thm-geom-griddable-all-summary} implies that every geometrically griddable class $\C$ is \emph{strongly rational}, in the sense that $\C$ and all of its subclasses have rational generating functions.  Strong rationality has numerous consequences for permutation classes, such as the following, which can be established by a simple counting argument.

\begin{proposition}[Albert, Atkinson, and Vatter~\cite{albert:subclasses-of-t:}]
\label{prop-strong-rat-pwo}
Every strongly rational class is pwo.
\end{proposition}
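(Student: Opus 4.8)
The plan is to prove the contrapositive: assuming a class $\C$ is not pwo, I will manufacture uncountably many subclasses of $\C$ with pairwise distinct generating functions, which contradicts the fact that there are only countably many rational functions in the first place. Since, as noted above, a permutation class can never contain an infinite strictly descending chain, failure of pwo means that $\C$ contains an infinite antichain; fix an enumeration $\alpha_1,\alpha_2,\dots$ of it. For each $S\subseteq\mathbb{N}$ let $\D_S$ denote the downward closure of $\{\alpha_i:i\in S\}$ (equivalently, the set of all permutations involved in some $\alpha_i$ with $i\in S$). Each $\D_S$ is a permutation class contained in $\C$, hence has a rational generating function by strong rationality, and the antichain condition gives the crucial property that $\alpha_j\in\D_S$ if and only if $j\in S$ (the forward direction because $\alpha_j\le\alpha_i$ forces $i=j$). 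In particular $S\subsetneq T$ implies $\D_S\subsetneq\D_T$.

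Next I would promote ``uncountably many distinct subclasses'' to ``uncountably many distinct generating functions'', since a priori different classes can share a generating function. The trick is to work along a chain of subsets rather than an arbitrary uncountable family: fixing an enumeration $q_1,q_2,\dots$ of $\mathbb{Q}$ and setting $S_r=\{i:q_i<r\}$ for each real $r$, one has $S_r\subsetneq S_{r'}$ whenever $r<r'$, hence $\D_{S_r}\subsetneq\D_{S_{r'}}$, and therefore $|(\D_{S_r})_n|<|(\D_{S_{r'}})_n|$ at the length $n$ of any permutation witnessing that strict containment. Thus the generating functions of the classes $\D_{S_r}$, $r\in\mathbb{R}$, are pairwise distinct, giving an uncountable family of rational generating functions.

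Finally I would invoke the elementary fact that a power series with rational coefficients that represents a rational function can be written as $P(x)/Q(x)$ with $P,Q\in\mathbb{Q}[x]$, of which there are only countably many pairs; so there are only countably many rational generating functions, contradicting the uncountable family constructed above. The single point that genuinely needs care — and the step I would expect a hasty argument to get wrong — is exactly the passage from distinct subclasses to distinct generating functions: the bare count ``$2^{\aleph_0}$ subclasses versus $\aleph_0$ rational functions'' does not by itself force a contradiction, which is why the linearly ordered family $\{S_r\}$, rather than an arbitrary uncountable set of subsets of $\mathbb{N}$, is the right object to use.
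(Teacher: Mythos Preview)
Your argument is correct and is precisely the kind of counting argument the paper alludes to; note that the paper does not actually prove this proposition, it merely cites the source and remarks that it ``can be established by a simple counting argument.'' Your passage from an arbitrary uncountable family of subclasses to an uncountable \emph{chain} of subclasses (via the Dedekind-cut style sets $S_r$) is exactly the right refinement needed to guarantee pairwise distinct generating functions, and your final paragraph correctly identifies why the naive $2^{\aleph_0}$-versus-$\aleph_0$ count is insufficient on its own.
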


Our second major tool is the \emph{substitution decomposition} of permutations into intervals.  An \emph{interval} in the permutation $\pi$ is a set of contiguous indices $I=\interval{a}{b}=\{a,a+1,\dots,b\}$ such that the set of values $\pi(I)=\{\pi(i) : i\in I\}$ is also contiguous.  Given a permutation $\sigma$ of length $m$ and nonempty permutations $\alpha_1,\dots,\alpha_m$, the \emph{inflation} of $\sigma$ by $\alpha_1,\dots,\alpha_m$ is the permutation $\pi=\sigma[\alpha_1,\dots,\alpha_m]$ obtained by replacing each entry $\sigma(i)$ by an interval that is order isomorphic to $\alpha_i$.  For example,
\[
2413[1,132,321,12]=4\ 798\ 321\ 56. 
\]
Given two classes $\C$ and $\U$, the \emph{inflation} of $\C$ by $\U$ is defined as
\[
\C[\U]=\{\sigma[\alpha_1,\dots,\alpha_m]\st\mbox{$\sigma\in\C_m$ and $\alpha_1,\dots,\alpha_m\in\U$}\}.
\]
The class $\C$ is said to be \emph{substitution closed} if $\C[\C]\subseteq\C$. The \emph{substitution closure} $\langle\C\rangle$ of a class $\C$ is defined as the smallest substitution closed class containing $\C$.  A standard argument shows that $\langle\C\rangle$ exists, and a detailed construction of $\langle\C\rangle$ is provided by Proposition~\ref{prop-subst-completion-const}.

The main results of this paper can now be stated.
\begin{itemize}
\item If the class $\C$ is geometrically griddable, then every subclass of $\langle\C\rangle$ is finitely based and pwo (Theorem~\ref{thm-geom-simples-pwo-basis}).
\item If the class  $\C$ is geometrically griddable, then every subclass of $\langle\C\rangle$ has an algebraic generating function (Theorem~\ref{thm-context-free}).
\item If the class  $\C$ is geometrically griddable and $\U$ is strongly rational, then $\C[\U]$ is also strongly rational (Theorem~\ref{thm-geom-inflate-enum}).
\item Every small permutation class has a rational generating function (Theorem~\ref{thm-small-rational}).
\end{itemize}
As mentioned earlier, there are uncountably many classes with growth rate $\kappa$ which give rise to uncountably many different generating functions.
Hence there are permutation classes of growth rate $\kappa$ with nonrational, nonalgebraic, and even nonholonomic generating functions, so that
the final result above is best possible.

\section{Substitution Closures and Simple Permutations}
\label{sec-inflate}

Every permutation of length $n\ge 1$ has \emph{trivial} intervals of lengths $0$, $1$, and $n$; all other intervals are termed \emph{proper}.  A nontrivial permutation is said to be \emph{simple} if it has no proper intervals.  The shortest simple permutations are thus $12$ and $21$, there are no simple permutations of length three, and the simple permutations of length four are $2413$ and $3142$.  Several examples of simple permutations are plotted throughout the paper, for instance in Figures~\ref{fig-par-alts} and \ref{fig-osc}.

Inflations of $12$ and $21$ generally require special treatment and have their own names.  The \emph{(direct) sum} of the permutations $\alpha_1$ and $\alpha_2$ is $\alpha_1\oplus\alpha_2=12[\alpha_1,\alpha_2]$.  The permutation $\pi$ is said to be \emph{sum decomposable} if it can be expressed as a sum of two nonempty permutations, and \emph{sum indecomposable} otherwise.  
For every permutation $\pi$ there are unique sum indecomposable permutations $\alpha_1,\dots,\alpha_k$ (called the 
\emph{sum components} of $\pi$)  such that $\pi=\alpha_1\oplus\dots\oplus\alpha_k$.
A \emph{sum-prefix} and a \emph{sum-suffix} of $\pi$ are any of the permutations $\alpha_1\oplus\dots\oplus\alpha_i$ and $\alpha_i\oplus\dots\oplus \alpha_k$ for $i=1,\dots,k$.
The \emph{skew sum} operation is defined by $\alpha_1\ominus\alpha_2=21[\alpha_1,\alpha_2]$ and the notions of \emph{skew decomposable}, \emph{skew indecomposable}, \emph{skew components}, \emph{skew-prefix}, and \emph{skew-suffix} are defined analogously.

Simple permutations and inflations are linked by the following result.

\begin{proposition}[Albert and Atkinson~\cite{albert:simple-permutat:}]
\label{simple-decomp-unique}
Every nontrivial permutation $\pi$ is an inflation of a unique simple permutation $\sigma$.  Moreover, if $\pi=\sigma[\alpha_1,\dots,\alpha_m]$ for a simple permutation $\sigma$ of length $m\ge 4$, then each $\alpha_i$ is unique.  If $\pi$ is an inflation of $12$ (i.e., is sum decomposable), then there is a unique sum indecomposable $\alpha_1$  such that $\pi=12[\alpha_1,\alpha_2]$.  The same holds, mutatis mutandis, with $12$ replaced by $21$ and sum replaced by skew.
\end{proposition}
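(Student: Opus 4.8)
The plan is to analyse the inclusion poset of intervals of $\pi$, relying on two elementary facts about simple permutations: there is no simple permutation of length $3$, and a simple permutation $\sigma$ of length at least $3$ has $\sigma(1),\sigma(|\sigma|)\in\{2,\dots,|\sigma|-1\}$ (for instance $\sigma(1)=1$ would make $\{2,\dots,|\sigma|\}$ a proper interval). I also use the easy fact that if two intervals of a permutation overlap --- meet, with neither containing the other --- then their union is again an interval. For \emph{existence}: if $\pi$ is simple (in particular when $|\pi|=2$) take $\sigma=\pi$ with every $\alpha_i=1$; if $\pi$ is sum decomposable, writing $\pi=\beta_1\oplus\dots\oplus\beta_k$ with each $\beta_i$ sum indecomposable, put $\alpha_1=\beta_1$ and $\alpha_2=\beta_2\oplus\dots\oplus\beta_k$ so that $\pi=12[\alpha_1,\alpha_2]$, and symmetrically in the skew case. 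In the remaining case $\pi$ is non-simple but both sum and skew indecomposable, and the main thing to establish is that its maximal proper intervals $I_1,\dots,I_m$ (left to right) are pairwise disjoint: were two of them to overlap, their union would be an interval properly containing each, hence $=\{1,\dots,|\pi|\}$ by maximality, say with $I_i=\{1,\dots,b\}$ and $I_j=\{c,\dots,|\pi|\}$, $c\le b$; but then one of $\pi(\{1,\dots,b\})$, $\pi(\{c,\dots,|\pi|\})$ contains the value $1$ and the other contains $|\pi|$, which forces $\pi(\{1,\dots,b\})$ to be $\{1,\dots,b\}$ or $\{|\pi|-b+1,\dots,|\pi|\}$ and hence $\pi$ to be sum or skew decomposable --- contradiction. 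Since singletons are proper intervals the $I_j$ cover $\{1,\dots,|\pi|\}$, so they partition it with $m\ge2$; contracting each $I_j$ yields a permutation $\sigma$ of length $m$ which is simple (a proper interval of $\sigma$ of length $\ge2$ would lift to a proper interval of $\pi$ strictly containing some $I_j$), and then $m\notin\{2,3\}$ forces $m\ge4$ with $\pi=\sigma[\pi|_{I_1},\dots,\pi|_{I_m}]$.

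The heart of \emph{uniqueness} is a lemma: if $\sigma$ is simple of length $m\ge4$ and $\pi=\sigma[\alpha_1,\dots,\alpha_m]$ with blocks $I_1,\dots,I_m$, then every proper interval of $\pi$ is contained in a single $I_j$. To prove it, let $J$ be a proper interval of $\pi$; as a contiguous set of positions it meets a consecutive run $I_a,\dots,I_b$ of blocks, containing $I_{a+1},\dots,I_{b-1}$ entirely and possibly cutting into $I_a$ and/or $I_b$. If some value block had rank strictly between the extreme values of $\sigma$ on $\{a,\dots,b\}$ yet were not met by $J$, the contiguity of $\pi(J)$ would be destroyed; hence $\{\sigma(a),\dots,\sigma(b)\}$ is a set of consecutive integers, that is, $\{a,\dots,b\}$ is an interval of $\sigma$. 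As $\sigma$ is simple this gives either $a=b$, whence $J\subseteq I_a$, or $\{a,\dots,b\}=\{1,\dots,m\}$; in the latter case, since $\sigma(1),\sigma(m)\notin\{1,m\}$ the lowest and the highest value blocks of $\pi$ lie among $I_2,\dots,I_{m-1}$ and so are contained in $J$, forcing $\pi(J)$ to be all of $\{1,\dots,|\pi|\}$ and $J$ to be improper, a contradiction. The lemma has two consequences. First, the maximal proper intervals of $\pi$ are exactly $I_1,\dots,I_m$: each $I_j$ is proper (as $m\ge2$) and is maximal because any proper interval containing it lies in a single block, which can only be $I_j$; and conversely any maximal proper interval, lying in a block, must equal it. Second, $\pi$ is both sum and skew indecomposable, since a decomposition would produce a proper interval inside $I_1$ whose value block would have to be the lowest or highest of $\pi$, i.e.\ $\sigma(1)\in\{1,m\}$, which is false.

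With the lemma, uniqueness is quick. For a sum decomposable $\pi$: the sum components are unique (recalled above), and any sum-indecomposable $\alpha_1$ with $\pi=12[\alpha_1,\alpha_2]$ is a sum-indecomposable sum-prefix of $\pi$ that can be neither a proper sum-prefix of the first sum component nor properly contain it, so it equals the first sum component --- this pins down $\alpha_1$ and then $\alpha_2$; the skew case is the mirror image. For $\pi=\sigma[\alpha_1,\dots,\alpha_m]$ with $\sigma$ simple of length $m\ge4$: the lemma identifies $\{I_1,\dots,I_m\}$ with the set of maximal proper intervals of $\pi$, an invariant of $\pi$, so the quotient $\sigma$ and each $\alpha_i=\pi|_{I_i}$ are determined. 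Finally, uniqueness of $\sigma$ across all cases follows since a sum decomposable permutation is never skew decomposable and (by the lemma's second consequence) never an inflation of a simple permutation of length $\ge4$, so its $\sigma$ must be $12$; symmetrically for skew; and a simple $\pi$ forces all blocks to be singletons and $\sigma=\pi$.

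I anticipate the main obstacle to be the cutting step inside the lemma --- ruling out a proper interval that slices through an end block of a maximal run of blocks --- which is precisely where the structural facts about simple permutations (no proper intervals; no extreme value in an extreme position) come into play; the rest is bookkeeping with the overlapping-intervals observation. The cleanest way to organise that step is probably to prove the slightly stronger assertion that every interval of $\pi=\sigma[\alpha_1,\dots,\alpha_m]$ (with $\sigma$ simple of length $\ge4$) is either contained in one block or is a union of consecutive blocks that forms an interval of $\sigma$; the lemma, and with it the whole proposition, then follows immediately.
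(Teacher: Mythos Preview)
The paper does not prove this proposition; it is quoted from Albert and Atkinson \cite{albert:simple-permutat:} as a known result and no argument is supplied. Your proof is correct and self-contained: the existence argument via maximal proper intervals is the standard one, and your key lemma (for simple $\sigma$ of length $\ge 4$, every proper interval of $\sigma[\alpha_1,\dots,\alpha_m]$ lies in a single block) is proved cleanly, with the crucial use of $\sigma(1),\sigma(m)\notin\{1,m\}$ to rule out the case $\{a,\dots,b\}=\{1,\dots,m\}$.

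One remark on context: the paper does prove a closely related structural fact, Proposition~\ref{intervals-in-inflation}, which describes the intervals of an arbitrary inflation $\sigma[\alpha_1,\dots,\alpha_m]$ (with no simplicity assumption on $\sigma$). That result is weaker in conclusion but more general in hypothesis than your lemma, and specialising it to simple $\sigma$ of length $\ge 4$ recovers your lemma immediately, since a simple $\sigma$ has no proper intervals $\tau$ to which the ``spillover'' clauses could apply. So your approach and the paper's surrounding machinery are entirely compatible; you have simply filled in a proof the paper chose to cite.
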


We need several technical details about inflations, and we begin by investigating the intervals of an inflation $\pi = \sigma[\alpha_1, \dots, \alpha_m]$. Trivially, any subinterval of any $\alpha_i$ will be an interval of $\pi$, and if $\tau$ is an interval of $\sigma$ corresponding to indices $i$ through $j$, then the entries corresponding to $\tau[\alpha_i,\dots,\alpha_j]$ will also form an interval of $\pi$. Any other interval contains an interval of this second type, possibly together with some entries of $\alpha_{i-1}$ and $\alpha_{j+1}$.

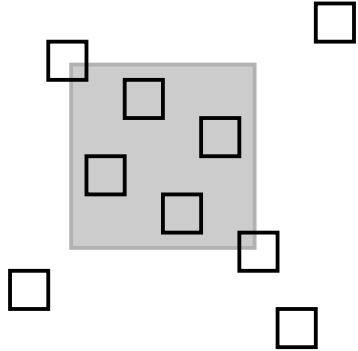
\begin{figure}
\begin{center}
\psset{xunit=0.02in, yunit=0.02in}
\psset{linewidth=0.005in}
\begin{pspicture}(0,0)(100,100)

\pspolygon[linecolor=darkgray,linestyle=solid,linewidth=0.02in,fillstyle=solid,fillcolor=lightgray](26,74)(74,74)(74,26)(26,26)
\pspolygon[linecolor=black,linestyle=solid,linewidth=0.02in](10,10)(20,10)(20,20)(10,20)
\pspolygon[linecolor=black,linestyle=solid,linewidth=0.02in](20,70)(30,70)(30,80)(20,80)
\pspolygon[linecolor=black,linestyle=solid,linewidth=0.02in](30,40)(40,40)(40,50)(30,50)
\pspolygon[linecolor=black,linestyle=solid,linewidth=0.02in](40,60)(50,60)(50,70)(40,70)
\pspolygon[linecolor=black,linestyle=solid,linewidth=0.02in](50,30)(60,30)(60,40)(50,40)
\pspolygon[linecolor=black,linestyle=solid,linewidth=0.02in](60,50)(70,50)(70,60)(60,60)
\pspolygon[linecolor=black,linestyle=solid,linewidth=0.02in](70,20)(80,20)(80,30)(70,30)
\pspolygon[linecolor=black,linestyle=solid,linewidth=0.02in](80,0)(90,0)(90,10)(80,10)
\pspolygon[linecolor=black,linestyle=solid,linewidth=0.02in](90,80)(100,80)(100,90)(90,90)
\end{pspicture}
\end{center}
\caption{An illustration of an interval in an inflation of $285746319$. The small boxes represent the inflations of each point. The large shaded box captures the complete inflations of the interval $5746$ along with (possibly) some skew-suffix of the inflation of the entry $8$, and some skew-prefix of the inflation of the entry $3$.}
\label{fig-example-interval-in-inflation}
\end{figure}

\begin{proposition}
\label{intervals-in-inflation}
Suppose that $\pi = \sigma[\alpha_1,\dots,\alpha_m]$ and that $\theta$ is an interval of $\pi$ not contained in a single $\alpha_i$. Then there exist a (possibly empty) interval $\interval{i}{j}$ of indices, and intervals $\gamma_{i-1}$ and $\gamma_{j+1}$ of $\alpha_{i-1}$ and $\alpha_{j+1}$ respectively, such that $\tau =   \sigma(\interval{i}{j})$ is an interval of $\sigma$, and the entries of $\theta$ correspond to
\[
\gamma_{i-1} \oplus \tau[\alpha_i,\dots,\alpha_j] \oplus \gamma_{j+1}
\quad \mbox{or} \quad
\gamma_{i-1} \ominus \tau[\alpha_i,\dots,\alpha_j] \ominus \gamma_{j+1}.
\]
In the first case $\gamma_{i-1}$ may be nonempty only if $\sigma(\interval{i-1}{j})$ is also an interval of $\sigma$ of the form $1 \oplus \tau$ and $\gamma_{i-1}$ is a sum-suffix of $\alpha_{i-1}$, while $\gamma_{j+1}$ may be nonempty only if $\sigma(\interval{i}{j+1})$ is also an interval of $\sigma$ of the form $\tau \oplus 1$ and $\gamma_{j+1}$ is a sum-prefix of $\alpha_{j+1}$. Analogous conditions apply for the second alternative.
\end{proposition}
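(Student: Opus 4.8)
The plan is to read off $[i,j]$, $\gamma_{i-1}$, $\gamma_{j+1}$ directly from how the positions of $\theta$ sit inside the block structure of $\pi=\sigma[\alpha_1,\dots,\alpha_m]$, and then use the contiguity of the \emph{values} of $\theta$ to pin down $\tau$ and the shapes of the two end pieces. Partition the positions of $\pi$ into $m$ consecutive \emph{blocks}, the $k$th being the positions of the copy of $\alpha_k$, and partition the values of $\pi$ into $m$ consecutive \emph{bands}, the band $V_k$ of block $k$ being the $\sigma(k)$th from the bottom. Since the positions of $\theta$ form an interval and the blocks tile the positions of $\pi$, the set of blocks met by $\theta$ is a consecutive run $[a,b]$; as $\theta$ is not contained in a single $\alpha_i$ we have $a<b$, and every block strictly between $a$ and $b$ is wholly inside $\theta$, so the set of blocks \emph{saturated} by $\theta$ (contained in it entirely) is a consecutive run $[i,j]$ with $[a+1,b-1]\subseteq[i,j]\subseteq[a,b]$, possibly empty in the degenerate situation $b=a+1$ with neither of the two blocks saturated. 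I take this $[i,j]$ to be the interval of the statement, and take $\gamma_{i-1}$ (resp. $\gamma_{j+1}$) to be the sub-permutation of $\alpha_{i-1}$ (resp. $\alpha_{j+1}$) determined by those of its entries lying in $\theta$; this is empty unless $a=i-1$ (resp. $b=j+1$) is an end block met but not saturated by $\theta$.

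The first main step is to show $\tau=\sigma([i,j])$ is an interval of $\sigma$. The values of the saturated part of $\theta$ are exactly $\bigcup_{k\in[i,j]}V_k$, the union of the bands indexed by $S=\{\sigma(k):k\in[i,j]\}$. If $S$ were not an interval of $[m]$, some nonempty band $V_t$ with $t\notin S$ would lie between values that $\pi(\theta)$ does contain, forcing $V_t\subseteq\pi(\theta)$; but $\theta$ meets only blocks in $[a,b]$, and the only blocks of $[a,b]$ outside $[i,j]$ are the end blocks, of which $\theta$ contains a proper part, so $\pi(\theta)$ cannot contain a whole band lying outside the saturated run --- a contradiction. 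Hence $\sigma([i,j])$ is an interval of $\sigma$, so its value set is a contiguous block of bands. The same ``gap'' argument, now applied to the band just above and the band just below that block, shows that whenever $\gamma_{i-1}$ (resp. $\gamma_{j+1}$) is nonempty, $\sigma(i-1)$ (resp. $\sigma(j+1)$) must equal $\min\tau-1$ or $\max\tau+1$, so that $\sigma([i-1,j])$ (resp. $\sigma([i,j+1])$) is itself an interval of $\sigma$, of the form $1\oplus\tau$ or $1\ominus\tau$ (resp. $\tau\oplus1$ or $\tau\ominus1$) according to whether the band of that end block lies below or above the saturated bands.

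The second main step identifies the shapes of the end pieces, and rests on an elementary fact about a single permutation $\alpha$: a set of entries of $\alpha$ that is simultaneously a prefix in position and either a down-set or an up-set of values is, respectively, a sum-prefix or a skew-prefix of $\alpha$; symmetrically for suffixes. Such a set is an interval of $\alpha$ beginning at position $1$, so its complement is a complementary interval, which splits $\alpha$ as a sum or a skew sum; inducting, the only case to exclude is that the dividing line falls strictly inside a sum- (resp. skew-) component, which is impossible because a proper prefix of a sum- (resp. skew-) indecomposable permutation whose values form a down-set (resp. up-set) would itself be a proper interval, contradicting indecomposability. Now contiguity of $\pi(\theta)$ at an end where, say, block $i-1$'s band lies below the saturated bands forces the entries of $\theta$ in block $i-1$ to be a position-suffix of $\alpha_{i-1}$ whose values form an up-set of $\alpha_{i-1}$, hence a sum-suffix; when that band lies above one gets a skew-suffix; and dually for block $j+1$ with prefixes. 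Reading off how the pieces sit relative to the saturated middle then yields $\theta=\gamma_{i-1}\oplus\tau[\alpha_i,\dots,\alpha_j]\oplus\gamma_{j+1}$ in the ``below/above'' configuration and $\gamma_{i-1}\ominus\tau[\alpha_i,\dots,\alpha_j]\ominus\gamma_{j+1}$ in the opposite one; injectivity of $\sigma$ guarantees consistency, since the two end bands cannot both be the one just below, nor both the one just above, the saturated run, so a single choice of $\oplus$ or $\ominus$ always works. The main obstacle is purely organisational: keeping the several configurations straight --- which end blocks are saturated, each end band above versus below, and the degenerate empty-$[i,j]$ case --- and recording the prefix/suffix fact in exactly the four forms the statement requires.
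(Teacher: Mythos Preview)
Your proof is correct and follows essentially the same approach as the paper: both define $[i,j]$ as the set of indices whose $\alpha_k$ is wholly contained in $\theta$, and then analyse the overlap of $\theta$ with the two end blocks. The paper compresses your second step into the single observation that when two intervals of a permutation overlap with neither contained in the other, their intersection is a sum- or skew-suffix of one and a sum- or skew-prefix of the other; your position-prefix/value-downset lemma is just an unpacking of this same fact, and your explicit verification that $\sigma([i,j])$ is an interval of $\sigma$ fills in a detail the paper leaves implicit.
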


\begin{proof}
The set of indices $k$ such that $\alpha_k$ is wholly contained in $\theta$ forms an interval $I = \interval{i}{j}$. Suppose first that $I$ is nonempty.

Observe that if two intervals of a permutation overlap, but neither is contained in the other, then their intersection is a sum- or skew-suffix of one and a sum- or skew-prefix of the other.  So, in the case where $\alpha_{i-1}$ and $\theta$ have entries in common, these entries $\beta_{i-1}$ must lie with respect to $\tau[\alpha_i,\dots,\alpha_j]$ as specified in the statement of the proposition, and similar conclusions apply to $\alpha_{j+1}$. 

In case that $I$ is empty, $\theta$ must have nontrivial intersection with exactly two consecutive intervals $\alpha_{i-1}$ and $\alpha_{j+1}$. Then a minor modification of the argument above (relating $\beta_{i-1}$ directly to $\beta_{j+1}$) completes the proof.
\end{proof}

Inflations $\C[\U]$ of one class by another feature prominently in what follows.  In this area we follow in the footsteps of Brignall~\cite{brignall:wreath-products:}.  Let us define  a \emph{$\U$-inflation} of $\sigma$ to be any permutation of the form
\[
\pi=\sigma[\alpha_1,\dots,\alpha_m]\mbox{ with $\alpha_1,\dots,\alpha_m\in\U$;}
\]
we also refer to the above expression as a \emph{$\U$-decomposition} of $\pi$.

\begin{proposition}[Brignall~\cite{brignall:wreath-products:}]
\label{prop-U-profile}
For every nonempty permutation class $\U$ and every permutation $\pi$, the set
\[
\{\sigma\st\mbox{\textup{$\pi$ can be expressed as a $\U$-inflation of $\sigma$}}\}
\]
has a unique minimal element (with respect to the permutation containment order).
\end{proposition}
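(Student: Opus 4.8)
Write $\T(\tau)=\{\sigma\st \tau\text{ is a }\U\text{-inflation of }\sigma\}$; we must show $\T(\pi)$ has a unique minimal element. The plan is to prove this by strong induction on $|\pi|$, in fact by pinning down $\T(\pi)$ exactly. First note that $\T(\pi)$ is nonempty, since $\pi=\pi[1,\dots,1]$ and $1\in\U$, and that $|\sigma|\le|\pi|$ for every $\sigma\in\T(\pi)$, so $\T(\pi)$ is finite and has minimal elements; the content of the proposition is uniqueness, i.e.\ that $\T(\pi)$ has a \emph{least} element. If $\pi\in\U$ then $1\in\T(\pi)$ and $1$ lies below every nonempty permutation, so $\min\T(\pi)=1$. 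Hence we may assume $\pi$ is nontrivial with $\pi\notin\U$, so that every member of $\T(\pi)$ has length at least $2$.

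The engine of the argument is the uniqueness of the simple permutation in the substitution decomposition (Proposition~\ref{simple-decomp-unique}). Let $\sigma$ be the unique simple permutation of which $\pi$ is an inflation. If $\rho\in\T(\pi)$, then $\pi=\rho[\alpha_1',\dots]$ with all $\alpha_i'\in\U$; expressing $\rho$ itself as an inflation of a simple permutation $\zeta$ exhibits $\pi$ as an inflation of $\zeta$, whence $\zeta=\sigma$. This constrains the shape of $\rho$, and we split into three cases.
\begin{itemize}
\item \emph{$|\sigma|\ge 4$.} Write $\pi=\sigma[\alpha_1,\dots,\alpha_m]$ with the $\alpha_i$ uniquely determined (Proposition~\ref{simple-decomp-unique}). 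Then $\rho=\sigma[\delta_1,\dots,\delta_m]$ for suitable $\delta_i$, and comparing the resulting two inflations of $\sigma$ (again by uniqueness) yields $\delta_i\in\T(\alpha_i)$; conversely each such $\rho$ lies in $\T(\pi)$. As $|\alpha_i|<|\pi|$, induction gives $\mu_i=\min\T(\alpha_i)$, and since inflation is monotone in each coordinate, $\min\T(\pi)=\sigma[\mu_1,\dots,\mu_m]$.
\item \emph{$\sigma=12$.} Here $\rho$ is an inflation of $12$, hence sum decomposable. Writing $\pi=\beta_1\oplus\dots\oplus\beta_k$ with the $\beta_i$ sum indecomposable, and using that an inflation of a sum-indecomposable permutation of length $\ge 2$ is again sum indecomposable (a routine consequence of Proposition~\ref{intervals-in-inflation}), one finds that $\rho$ corresponds to a partition of $\{1,\dots,k\}$ into consecutive blocks: a block of size $\ge2$ forces the direct sum of the corresponding $\beta_i$ to lie in $\U$ and contributes a single point to $\rho$, whereas a singleton block $\{i\}$ contributes an element of $\T(\beta_i)$.
\item \emph{$\sigma=21$.} The skew mirror of the previous case.
\end{itemize}

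The main obstacle is the sum-decomposable case (and its skew twin): making $\rho$ smaller means coarsening the block partition, but coarsening is constrained by membership in $\U$, and since $\U$ is merely a downset the family of ``mergeable'' intervals is closed under passing to sub-intervals but not under unions, so two $\U$-decompositions need not have a common coarsening --- there is no naive meet. The way around this is to observe that the constraint decouples: as $\U$ is a downset, a sum component $\beta_i\notin\U$ cannot lie in a block of size $\ge 2$, so it forms its own singleton block in \emph{every} $\U$-decomposition of $\pi$, and $\{1,\dots,k\}$ splits at each such $i$ into independent runs of $\U$-valued sum components. Within a run the contribution to $\rho$ is an increasing sequence of length at least $t$, where $t$ is the minimum number of parts in a partition of that run into mergeable blocks --- a quantity independent of the partition chosen --- so taking these minima over all runs and replacing each $\beta_i\notin\U$ by $\min\T(\beta_i)$ produces a permutation that lies in $\T(\pi)$ and, comparing direct summand by direct summand (using that $A_i\le B_i$ for all $i$ implies $\bigoplus_i A_i\le\bigoplus_i B_i$), is contained in every element of $\T(\pi)$. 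This is the desired least element, which completes the induction.
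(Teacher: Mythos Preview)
The paper does not supply its own proof of this proposition; it is quoted from Brignall without argument, so there is nothing to compare against directly and your proof must stand on its own.

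Your argument is essentially correct. The inductive scheme via the simple skeleton is sound, and the key observation in the sum-decomposable case---that a sum component $\beta_i\notin\U$ must occupy its own singleton block in \emph{every} $\U$-decomposition, thereby decoupling the problem into independent runs---is exactly the right idea.

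Two small points of imprecision. First, the claim that inflating a sum-indecomposable permutation of length at least $2$ yields a sum-indecomposable permutation is most cleanly argued via Proposition~\ref{simple-decomp-unique} rather than Proposition~\ref{intervals-in-inflation}: the simple skeleton of $\tau[\gamma_1,\dots,\gamma_m]$ equals that of $\tau$, and the latter is not $12$. Second, in the run analysis you assert that ``the contribution to $\rho$ is an increasing sequence of length at least $t$''. This is not literally true: a singleton block $\{i\}$ with $\beta_i\in\U$ may contribute any element of $\T(\beta_i)$ to $\rho$, not necessarily a single point. What \emph{is} true, and all you need, is that the contribution from a run is a direct sum of at least $t$ nonempty permutations and therefore contains the increasing permutation of length $t$; since your candidate minimum contributes exactly that increasing permutation on the run, the required comparison goes through summand by summand. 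With this correction the proof is complete.
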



This unique minimal $\sigma$ is called the \emph{$\U$-profile} of $\pi$; note that the $\U$-profile of $\pi$ is $1$ precisely if $\pi\in\U$.  To verify that $\pi\in\C[\U]$, it suffices to check whether the $\U$-profile of $\pi$ lies in $\C$.  However, for our enumerative intents, Proposition~\ref{prop-U-profile} is not sufficient, because it does not 
guarantee uniqueness of substitution decomposition, even after the $\U$-profile has been singled out.
For a very simple example, consider the permutation $12345$.  The $\Av(123)$-profile of this permutation is $123$, but it has three  decompositions with respect to this profile:
\[
12345=123[12,12,1]=123[12,1,12]=123[1,12,12].
\]
To address this problem we introduce the \emph{left-greedy} $\U$-decomposition of the permutation $\pi$ as $\sigma[\alpha_1,\dots,\alpha_m]$, where $\sigma$ is the $\U$-profile of $\pi$ and 
the $\alpha_i\in\U$ are chosen from left to right to be as long as possible.
We may also refer to such $\sigma[\alpha_1,\dots,\alpha_m]$ as a \emph{left-greedy} $\U$-inflation of $\sigma$.
By definition the left-greedy $\U$-decomposition is unique, and the question arises as to how to distinguish it from other $\U$-decompositions of $\pi$.
Our next proposition shows that if a $\U$-decomposition is not left-greedy then either several of the $\alpha_i$ can be merged, or a sum- or skew-prefix of one $\alpha_i$ can be appended to $\alpha_{i-1}$.

\begin{proposition}
\label{prop-left-greedy-U-inflate}
Let $\U$ be a nonempty permutation class. A $\U$-decomposition
\[
\pi=\theta[\gamma_1, \dots, \gamma_k]\mbox{ where $\gamma_1,\dots,\gamma_k\in\U$}
\]
is \emph{not} the left-greedy $\U$-decomposition of $\pi$ if and only if there is an interval $\interval{i}{j}$ of length at least $2$ giving rise to an interval of $\theta$ which is order isomorphic to a permutation $\tau$ such that
\begin{enumerate}
\item[(G1)] $\tau[\gamma_i,\gamma_{i+1},\dots,\gamma_j]\in\U$ (implying that $\theta$ is in fact not the $\U$-profile of $\pi$); or
\item[(G2)] $\tau=12$ and the sum of $\gamma_i$ with the first sum component  of $\gamma_{i+1}$ lies in $\U$; or, similarly,
\item[(G3)] $\tau=21$ and the skew sum of $\gamma_i$ with the first skew component of $\gamma_{i+1}$ lies in $\U$.
\end{enumerate}
\end{proposition}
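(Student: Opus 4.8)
The plan is to prove the two implications of the equivalence separately; throughout, ``left-greedy'' refers to the decomposition built on the $\U$-profile with left-to-right maximal parts. For the ``if'' direction I would show that each of (G1)--(G3) exhibits a competing $\U$-decomposition. If (G1) holds, contracting the interval $\interval{i}{j}$ produces a $\U$-decomposition of $\pi$ whose base $\theta'$ satisfies $\theta'<\theta$ (since $j\geq i+1$), so $\theta$ is not the $\U$-profile and hence the given decomposition is not left-greedy. If (G2) holds, write $\gamma_{i+1}=c\oplus c'$ with $c$ its first sum component: when $c'$ is empty, $\gamma_i\oplus\gamma_{i+1}\in\U$, which is case (G1); when $c'$ is nonempty, $\pi=\theta[\dots,\gamma_i\oplus c,\ c',\dots]$ is a $\U$-decomposition with base $\theta$ agreeing with the given one on positions $1,\dots,i-1$ but with a strictly longer $i$th part, so the given decomposition is not left-greedy. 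Case (G3) is the skew analogue.

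For the ``only if'' direction, suppose $\theta[\gamma_1,\dots,\gamma_k]$ is not left-greedy. If $\theta$ is the $\U$-profile, then the $\gamma_i$ are not all left-to-right maximal; let $i$ be least with the greedy part $\alpha_i$ strictly longer than $\gamma_i$ (both are valid choices at step $i$, start in the same block of $\pi$, and differ, so $|\alpha_i|>|\gamma_i|$). The surplus entries of $\alpha_i$ form a positional prefix of $\gamma_{i+1}$'s block; because $\alpha_i$ is an interval, these surplus entries also form a sum- or skew-prefix of $\gamma_{i+1}$, which forces $\theta(i)$ and $\theta(i+1)$ to be consecutive in value. Thus $\tau=\theta(\interval{i}{i+1})\in\{12,21\}$, and $\alpha_i$ contains $\gamma_i\oplus c$ (or $\gamma_i\ominus c$) where $c$ is the first sum (resp.\ skew) component of $\gamma_{i+1}$; since $\alpha_i\in\U$ this yields (G2), (G3), or --- if all of $\gamma_{i+1}$ is absorbed --- (G1). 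It then remains to treat the case that $\theta$ is not the $\U$-profile, for which I would establish the lemma that \emph{every $\U$-decomposition whose base is not the $\U$-profile of $\pi$ admits an interval satisfying} (G1).

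I would prove this lemma by strong induction on $|\pi|$, using the substitution decomposition (Proposition~\ref{simple-decomp-unique}). If $\pi=\zeta[\alpha_1,\dots,\alpha_p]$ with $\zeta$ simple and $p\geq4$, then Proposition~\ref{intervals-in-inflation} together with the simplicity of $\zeta$ shows that every proper interval of $\pi$ --- in particular every block of any $\U$-decomposition --- lies inside a single $\alpha_i$; hence every $\U$-base of $\pi$ has the form $\zeta[\delta_1,\dots,\delta_p]$ with $\delta_i$ a $\U$-base of $\alpha_i$, and the $\U$-profile of $\pi$ is $\zeta$ inflated by the $\U$-profiles of the $\alpha_i$. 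As $\theta$ is not the profile, some $\delta_i$ is not the $\U$-profile of $\alpha_i$, so the inductive hypothesis supplies a (G1)-interval inside $\alpha_i$, which is also an interval of $\theta$. If instead $\pi$ is sum- or skew-decomposable, then so are $\theta$ and the $\U$-profile; I would compare their sum-component (resp.\ skew-component) structures, and when one refines the other, or they merely share an interior boundary, restrict to a proper sum-subpermutation and apply the inductive hypothesis, using Proposition~\ref{intervals-in-inflation} to control the partial overlap at the shared boundary.

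The main obstacle will be the sum-/skew-decomposable case in which the component structures of $\theta$ and of the $\U$-profile interleave with no common interior boundary, so that no proper sum-subpermutation is available for the induction. I anticipate resolving this by a careful bookkeeping argument --- recording which sum components of $\pi$ lie in each block on the two sides, and exploiting minimality of the profile to forbid contractible overlaps on its side --- which should show that this configuration forces $\theta$ to coincide with the $\U$-profile, making the earlier cases exhaustive and completing the lemma, and hence the proposition.
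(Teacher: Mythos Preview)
Your ``if'' direction is fine (the paper dismisses it as trivial). The difficulty is in your ``only if'' direction, where you split into the cases $\theta=\sigma$ (the $\U$-profile) and $\theta\neq\sigma$, and then attempt to handle the second case via an inductive lemma asserting that every non-profile $\U$-decomposition admits a (G1)-interval. This case split is the wrong move, and the lemma---while plausibly true---is exactly the part you cannot finish: your sketch for the sum/skew-decomposable subcase relies on a ``careful bookkeeping argument'' that you do not actually carry out, and the configuration where the component structures of $\theta$ and of the profile interleave with no common interior boundary is a genuine obstacle to the induction as you have set it up.

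The paper avoids all of this. Its ``only if'' argument is four sentences: compare the given decomposition $\theta[\gamma_1,\dots,\gamma_k]$ positionally with the left-greedy $\sigma[\alpha_1,\dots,\alpha_m]$, take the least $i$ with $|\gamma_i|<|\alpha_i|$, and apply Proposition~\ref{intervals-in-inflation} to the interval $\alpha_i$ of $\pi$ \emph{inside the $\theta$-decomposition}. That proposition then says $\alpha_i$ has the form $\tau[\gamma_i,\dots,\gamma_j]\oplus\beta_{j+1}$ (or the $\ominus$ version) for an interval $\tau$ of $\theta$ and a sum/skew-prefix $\beta_{j+1}$ of $\gamma_{j+1}$; since $\alpha_i\in\U$, if $j>i$ one reads off (G1), and if $j=i$ one reads off (G2) or (G3). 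The point you missed is that this argument is completely indifferent to whether $\theta$ equals the profile: the greedy decomposition is lexicographically maximal among \emph{all} $\U$-decompositions, so the first index where the block lengths disagree always has $|\gamma_i|<|\alpha_i|$, and Proposition~\ref{intervals-in-inflation} does the rest. Your Case~1 is already essentially this argument; you should simply drop the hypothesis $\theta=\sigma$ there, invoke Proposition~\ref{intervals-in-inflation} rather than arguing by hand about ``surplus entries'' (your informal version breaks down when $\alpha_i$ swallows more than one $\gamma$-block), and delete Case~2 and its lemma entirely.
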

\begin{proof}
One direction is trivial.  To prove the other direction, let the left-greedy $\U$-decomposition of $\pi$ be
\[
\pi = \sigma[\alpha_1, \dots, \alpha_m].
\]
Choose the minimum index $i$ such that $|\gamma_i| < |\alpha_i|$.  
By Proposition~\ref{intervals-in-inflation} applied to $\theta[\gamma_1,\dots,\gamma_k]$, the interval $\alpha_i$ of $\pi$ must be of the form
\[
\tau[\gamma_i, \dots, \gamma_j] 
\oplus
\beta_{j+1}
\quad \mbox{or} \quad
\tau[\gamma_i, \dots, \gamma_j] 
\ominus
\beta_{j+1}
\]
for some sum- or skew-prefix $\beta_{j+1}$ of $\gamma_{j+1}$.
If $j > i$ then we already see that the condition (G1) is met.

Otherwise, the values corresponding to $\gamma_i$ (which form an interval) together with the subpermutation of $\beta_{j+1}$ corresponding to the first sum or skew component of $\gamma_{j+1}$ show that either (G2) or (G3) is met.
\end{proof}

We now turn our attention to the substitution closure $\langle \C \rangle$ of a class $\C$. Since the intersection of any family of substitution closed classes is substitution closed, $\langle \C \rangle$ can be defined nonconstructively as the intersection of all substitution closed classes containing $\C$.  For our purposes, the following constructive description is more useful.

\begin{proposition}
\label{prop-subst-completion-const}
The substitution closure of a nonempty class $\C$ is given by
\[
\langle\C\rangle
=
\bigcup_{i=0}^{\infty} \C^{[i]},
\]
where $\C^{[0]}=\{1\}$ and $\C^{[i+1]} = \C[\C^{[i]}]$ for $i\ge0$.
\end{proposition}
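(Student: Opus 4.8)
Write $\D=\bigcup_{i\ge0}\C^{[i]}$. The plan is to establish the two inclusions $\D\subseteq\langle\C\rangle$ and $\langle\C\rangle\subseteq\D$ separately. Since $\langle\C\rangle$ is the smallest substitution closed class containing $\C$, the second inclusion will follow as soon as $\D$ is shown to be a substitution closed class containing $\C$; the first inclusion is then a routine induction proving $\C^{[i]}\subseteq\langle\C\rangle$ for every $i$, using only that $1\in\langle\C\rangle$ and $\langle\C\rangle[\langle\C\rangle]\subseteq\langle\C\rangle$.

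First I would record three elementary properties of the inflation operator. (i) \emph{Monotonicity}: if $\C\subseteq\C'$ and $\U\subseteq\U'$ then $\C[\U]\subseteq\C'[\U']$, directly from the definition. (ii) \emph{Closure under subpermutations}: if $\C$ and $\U$ are downsets then so is $\C[\U]$; indeed, given $\rho\le\sigma[\alpha_1,\dots,\alpha_m]$ with $\sigma\in\C$ and $\alpha_i\in\U$, restricting $\rho$ to each inflated block exhibits $\rho$ as an inflation of the pattern of $\sigma$ on those indices whose block is hit (a subpermutation of $\sigma$, hence in $\C$) by the corresponding sub-blocks (subpermutations of the $\alpha_i$, hence in $\U$). (iii) \emph{Associativity}: $\bigl(\sigma[\beta_1,\dots,\beta_m]\bigr)[\alpha_1,\dots,\alpha_n]=\sigma\bigl[\beta_1[\vec{\alpha}^{(1)}],\dots,\beta_m[\vec{\alpha}^{(m)}]\bigr]$, where $\vec{\alpha}^{(k)}$ is the consecutive sub-tuple of $\alpha_1,\dots,\alpha_n$ indexing the entries of $\sigma[\beta_1,\dots,\beta_m]$ that come from $\beta_k$. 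From (i) together with $\C^{[1]}=\C[\{1\}]=\C$ one gets $\C^{[0]}\subseteq\C^{[1]}\subseteq\C^{[2]}\subseteq\cdots$ by induction, so $\D$ is an increasing union; with (ii) this makes $\D$ a downset, and $\D\supseteq\C^{[1]}=\C$.

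The crux is that $\D$ is substitution closed, and for this I would prove the composition estimate
\[
\C^{[s]}[\C^{[t]}]\subseteq\C^{[s+t]}\qquad\text{for all }s,t\ge0
\]
by induction on $s$. The case $s=0$ holds because $\{1\}[\U]=\U$, and $s=1$ is the definition of $\C^{[t+1]}$. For the step, any member of $\C^{[s+1]}=\C[\C^{[s]}]$ is $\sigma[\beta_1,\dots,\beta_m]$ with $\sigma\in\C$ and each $\beta_i\in\C^{[s]}$; inflating by members of $\C^{[t]}$ and applying associativity (iii) rewrites the outcome as $\sigma[\beta_1[\vec{\alpha}^{(1)}],\dots,\beta_m[\vec{\alpha}^{(m)}]]$, where each inner $\beta_k[\vec{\alpha}^{(k)}]$ lies in $\C^{[s]}[\C^{[t]}]\subseteq\C^{[s+t]}$ by the inductive hypothesis, so the whole permutation lies in $\C[\C^{[s+t]}]=\C^{[s+t+1]}$. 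Granting the estimate: if $\sigma\in\D$ and $\alpha_1,\dots,\alpha_m\in\D$, then, since the chain $\C^{[i]}$ is increasing, $\sigma\in\C^{[s]}$ and all $\alpha_j\in\C^{[t]}$ for suitable $s$ and a common $t$, whence $\sigma[\alpha_1,\dots,\alpha_m]\in\C^{[s+t]}\subseteq\D$. This completes the verification that $\D$ is a substitution closed class containing $\C$, and hence both inclusions.

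I expect the only genuine obstacle to be formulating and using the associativity identity (iii) carefully: partitioning the bottom-level inflations $\alpha_1,\dots,\alpha_n$ into the consecutive sub-tuples $\vec{\alpha}^{(1)},\dots,\vec{\alpha}^{(m)}$ dictated by the middle layer $\beta_1,\dots,\beta_m$ is conceptually trivial but notationally fiddly, and it is also the point at which one should pin down the empty-permutation conventions (checking, for instance, that $\C^{[0]}=\{1\}$ is consistent with $1\in\U$ forcing the relevant inflations to recover $\U$). The monotonicity, downset, and minimality arguments are all routine.
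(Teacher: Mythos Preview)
Your proposal is correct and follows essentially the same route as the paper: both arguments use associativity of inflation to obtain $\C^{[s]}[\C^{[t]}]\subseteq\C^{[s+t]}$ (the paper states this as the equality $\C^{[i]}[\C^{[j]}]=\C^{[i+j]}$ via the class-level law $\X[\Y[\Z]]=(\X[\Y])[\Z]$), combine this with the increasing chain $\C^{[0]}\subseteq\C^{[1]}\subseteq\cdots$ to conclude that $\bigcup\C^{[i]}$ is substitution closed, and then appeal to minimality of $\langle\C\rangle$. Your version is somewhat more careful in also verifying that the union is a downset, which the paper leaves implicit.
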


\begin{proof}
Since $\langle\C\rangle$ is substitution closed, it contains all $\C^{[i]}$. The other inclusion is proved in a standard way by establishing that
$\bigcup \C^{[i]}$ is substitution closed, and appealing to minimality of $\langle\C\rangle$.
Inflations obey the associative law, i.e. 
$\X[\Y[\Z]]=(\X[\Y])[\Z]$ for any classes $\X,\Y,\Z$.
From this it readily follows that $\C^{[i]}[\C^{[j]}]=\C^{[i+j]}$.
Now consider an inflation $\sigma[\alpha_1,\dots,\alpha_m]$ where $\sigma$ and each $\alpha_i$ are contained in $\bigcup\C^{[i]}$.  Because $\C^{[0]}\subseteq \C^{[1]}\subseteq \C^{[2]}\subseteq\dots$, there is some $k$ such that $\sigma,\alpha_1,\dots,\alpha_m\in\C^{[k]}$.  It then follows that $\sigma[\alpha_1,\dots,\alpha_2]\in\C^{[k]}[\C^{[k]}]=\C^{[2k]}\subseteq\bigcup\C^{[i]}$, as desired.
\end{proof}


Another basic result about substitution closures is the following.

\begin{proposition}
\label{simples-in-substitution-completion}
The substitution closure $\langle \C \rangle$ of a class $\C$ contains exactly the same set of simple permutations as $\C$. 
A permutation class $\D$ is contained in $\langle\C\rangle$ if and only if all simple permutations of $\D$ belong to $\C$.
\end{proposition}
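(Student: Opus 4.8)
The plan is to treat the two assertions separately, using the constructive description $\langle\C\rangle=\bigcup_{i\ge 0}\C^{[i]}$ from Proposition~\ref{prop-subst-completion-const} for the first, and the uniqueness of the simple decomposition from Proposition~\ref{simple-decomp-unique} for the second. Throughout, the single fact doing the real work is that a simple permutation admits only trivial inflation decompositions: if $\sigma$ is simple and $\sigma=\tau[\alpha_1,\dots,\alpha_m]$, then either $m=1$ (so $\sigma=\alpha_1$) or every $\alpha_k$ is the one-point permutation (so $\sigma=\tau$), because if $m\ge 2$ and some $\alpha_k$ had length at least $2$ then the entries of $\sigma$ forming that block would be a proper interval, contradicting simplicity.

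For the first assertion, one inclusion is immediate from $\C\subseteq\langle\C\rangle$. For the reverse, I would prove by induction on $i$ that every simple permutation in $\C^{[i]}$ lies in $\C$. The cases $i=0,1$ are clear, since $\C^{[0]}=\{1\}$ contains no simple permutation and $\C^{[1]}=\C[\{1\}]=\C$. For the inductive step, let $\sigma$ be simple with $\sigma=\tau[\alpha_1,\dots,\alpha_m]$, $\tau\in\C$, and $\alpha_1,\dots,\alpha_m\in\C^{[i]}$. By the observation above, either $m=1$, whence $\sigma=\alpha_1\in\C^{[i]}$ and the inductive hypothesis gives $\sigma\in\C$, or every $\alpha_k$ is a singleton, whence $\sigma=\tau\in\C$. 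This shows $\langle\C\rangle$ and $\C$ contain exactly the same simple permutations.

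For the second assertion, the forward direction is immediate: if $\D\subseteq\langle\C\rangle$, then every simple permutation of $\D$ is a simple permutation of $\langle\C\rangle$, hence of $\C$ by the first part. For the converse, assume every simple permutation of $\D$ lies in $\C$, and show $\pi\in\langle\C\rangle$ for each $\pi\in\D$ by induction on $|\pi|$. Permutations of length at most $1$ lie in $\langle\C\rangle$ trivially. For $|\pi|\ge 2$, write $\pi=\sigma[\alpha_1,\dots,\alpha_m]$ with $\sigma$ the simple permutation provided by Proposition~\ref{simple-decomp-unique}; note $m=|\sigma|\ge 2$. Selecting one entry from each inflated block exhibits $\sigma$ as a subpermutation of $\pi$, and each $\alpha_k$ is visibly a subpermutation of $\pi$, so, as $\D$ is a downset, $\sigma,\alpha_1,\dots,\alpha_m\in\D$. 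Then $\sigma\in\C\subseteq\langle\C\rangle$ because $\sigma$ is simple, while $m\ge 2$ forces $|\alpha_k|<|\pi|$, so the inductive hypothesis gives $\alpha_k\in\langle\C\rangle$ for each $k$. Since $\langle\C\rangle$ is substitution closed, $\pi=\sigma[\alpha_1,\dots,\alpha_m]\in\langle\C\rangle$, completing the induction.

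I do not expect a serious obstacle here: the proof is bookkeeping built on the one structural fact isolated in the first paragraph. The points requiring care are precisely the degenerate cases — separating the alternative $m=1$ from the alternative "all inflating factors are singletons", correctly handling the trivial permutations at the base of each induction, and remembering that Proposition~\ref{simple-decomp-unique} also covers $\sigma\in\{12,21\}$ (these are simple, so they are treated uniformly with the rest).
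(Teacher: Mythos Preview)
Your proposal is correct and follows essentially the same approach as the paper: both arguments hinge on the observation that a simple permutation cannot arise from a nontrivial inflation, use Proposition~\ref{prop-subst-completion-const} for the first assertion, and prove the converse of the second assertion by inducting on the length of $\pi$ via its simple decomposition from Proposition~\ref{simple-decomp-unique}. Your write-up is simply more explicit about the edge cases (notably the $m=1$ alternative and the verification that $\sigma,\alpha_1,\dots,\alpha_m\in\D$).
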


\begin{proof}
The first 
assertion and the forward implication of the second assertion
follow immediately from 
Proposition \ref{prop-subst-completion-const} and
the observation that a simple permutation can never be obtained by a nontrivial inflation.  
For the converse implication of the second assertion
suppose that all simple permutations of a class $\D$ belong to $\C$.
Let $\pi\in\D$ be a nontrivial permutation, and decompose it as $\pi=\sigma[\alpha_1,\dots,\alpha_m]$
where $\sigma$ is simple.
By assumption $\sigma\in\C$, and if we inductively suppose $\alpha_1,\dots,\alpha_m\in\langle\C\rangle$,
we obtain $\pi\in\langle\C\rangle$ by Proposition \ref{prop-subst-completion-const}, as required.
\end{proof}

For the remainder of this section we concern ourselves with the basis of the substitution closure of a class.

\begin{proposition}[Albert and Atkinson~\cite{albert:simple-permutat:}]\label{substitution-completion-basis}
The basis of the substitution closure of a class $\C$ consists of all minimal simple permutations not contained in $\C$.
\end{proposition}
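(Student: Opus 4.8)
The plan is to reduce everything to the membership criterion furnished by Proposition~\ref{simples-in-substitution-completion}. Applying that proposition to the principal class $\{\rho\st\rho\le\pi\}$ generated by a single permutation $\pi$, and using that $\langle\C\rangle$ is a downset, one obtains the convenient reformulation: $\pi\in\langle\C\rangle$ if and only if every simple permutation contained in $\pi$ belongs to $\C$. Equivalently, $\pi\notin\langle\C\rangle$ precisely when $\pi$ contains some simple permutation lying outside $\C$. Write $S$ for the set of simple permutations not in $\C$; the assertion to prove is that the basis of $\langle\C\rangle$ equals the set of minimal elements of $S$ under the containment order.

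First I would show that every minimal element $\sigma$ of $S$ belongs to the basis of $\langle\C\rangle$. Since $\sigma$ is simple and $\sigma\notin\C$, the criterion above gives $\sigma\notin\langle\C\rangle$ (witnessed by the simple permutation $\sigma\le\sigma$ itself). It remains to check that every permutation $\rho$ properly contained in $\sigma$ lies in $\langle\C\rangle$; by the criterion this amounts to showing that every simple permutation $\tau\le\rho$ lies in $\C$. Such a $\tau$ satisfies $|\tau|\le|\rho|<|\sigma|$, so $\tau$ is a simple permutation properly contained in $\sigma$; by minimality of $\sigma$ in $S$ we must have $\tau\in\C$. Hence $\rho\in\langle\C\rangle$, and therefore $\sigma$ is a minimal permutation not in $\langle\C\rangle$, i.e. a basis element.

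Conversely, let $\beta$ be a basis element of $\langle\C\rangle$, so $\beta\notin\langle\C\rangle$ while every permutation properly contained in $\beta$ lies in $\langle\C\rangle$. By the criterion, $\beta$ contains a simple permutation $\sigma\notin\C$. If $\sigma$ were properly contained in $\beta$ it would lie in $\langle\C\rangle$; but the criterion applied to $\sigma$ fails, since $\sigma$ contains the simple permutation $\sigma\notin\C$. Hence $\sigma=\beta$, so $\beta$ is itself simple and $\beta\notin\C$, i.e. $\beta\in S$. Finally, $\beta$ is minimal in $S$: any simple $\tau$ properly contained in $\beta$ lies in $\langle\C\rangle$, so by the criterion $\tau\in\C$, i.e. $\tau\notin S$. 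This establishes the two inclusions, hence the equality of sets. (The basis is automatically an antichain, so nothing further is required.)

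The argument is essentially a bookkeeping exercise once Proposition~\ref{simples-in-substitution-completion} is in hand, so I do not anticipate a genuine obstacle; the only step demanding a little care is the reformulation of membership in $\langle\C\rangle$ in terms of principal classes, together with the length inequality $|\tau|\le|\rho|<|\sigma|$, which guarantees that a simple subpermutation of a proper subpermutation of $\sigma$ is again a proper subpermutation of $\sigma$.
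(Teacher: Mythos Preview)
Your proof is correct and follows essentially the same route as the paper's: both hinge on Proposition~\ref{simples-in-substitution-completion} to identify basis elements of $\langle\C\rangle$ as minimal simple permutations outside $\C$. The only cosmetic difference is that the paper shows a basis element $\beta$ must be simple by invoking the decomposition $\beta=\sigma[\alpha_1,\dots,\alpha_m]$ with $\sigma$ simple (Proposition~\ref{simple-decomp-unique}) together with substitution closure of $\langle\C\rangle$, whereas you extract the pointwise membership criterion ``$\pi\in\langle\C\rangle$ iff every simple $\tau\le\pi$ lies in $\C$'' by specialising the second part of Proposition~\ref{simples-in-substitution-completion} to the principal class below $\pi$, and then argue both inclusions directly from that.
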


\begin{proof}
Suppose that $\beta \not \in \langle \C \rangle$ is not simple. Then $\beta = \sigma[\alpha_1,\dots,\alpha_m]$ for some simple permutation $\sigma$ and permutations $\alpha_1, \dots, \alpha_m$ all strictly contained in $\beta$. Were $\sigma$ and $\alpha_1, \dots, \alpha_m$ all in $\langle \C \rangle$ we would have $\beta\in\langle\C\rangle$. Hence $\beta$ cannot be a basis element of $\langle \C \rangle$ since all the proper subpermutations of a basis element of a class must lie in the class. Therefore every basis element of $\langle \C \rangle$ is simple, and the proposition follows by the first part of Proposition \ref{simples-in-substitution-completion}. 
\end{proof}

A \emph{parallel alternation} is a permutation whose plot can be divided into two parts, by a single horizontal or vertical line, so that the points on either side of this line are both either increasing or decreasing and for every pair of points from the same part there is a point from the other part which {\it separates\/} them, i.e. lies either horizontally or vertically between them.  It is easy to see that a parallel alternation of length at least four is simple if and only if its length is even and it does not begin with its smallest entry.  Thus there are precisely four simple parallel alternations of each even length at least six, shown in Figure~\ref{fig-par-alts}, and no simple parallel alternations of odd length.

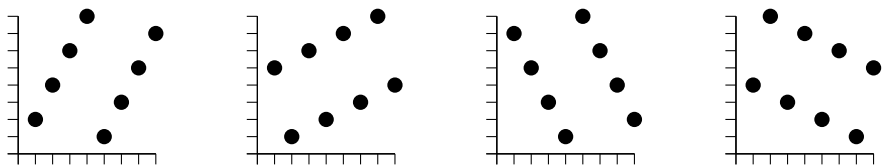
\begin{figure}
\begin{center}
\begin{tabular}{ccccccc}

\psset{xunit=0.009in, yunit=0.009in}
\psset{linewidth=0.005in}
\begin{pspicture}(0,0)(80,80)
\psaxes[dy=10,Dy=1,dx=10,Dx=1,tickstyle=bottom,showorigin=false,labels=none](0,0)(80,80)
\pscircle*(10,20){0.04in}
\pscircle*(20,40){0.04in}
\pscircle*(30,60){0.04in}
\pscircle*(40,80){0.04in}
\pscircle*(50,10){0.04in}
\pscircle*(60,30){0.04in}
\pscircle*(70,50){0.04in}
\pscircle*(80,70){0.04in}
\end{pspicture}

&\rule{0.2in}{0pt}&

\psset{xunit=0.009in, yunit=0.009in}
\psset{linewidth=0.005in}
\begin{pspicture}(0,0)(80,80)
\psaxes[dy=10,Dy=1,dx=10,Dx=1,tickstyle=bottom,showorigin=false,labels=none](0,0)(80,80)
\pscircle*(10,50){0.04in}
\pscircle*(20,10){0.04in}
\pscircle*(30,60){0.04in}
\pscircle*(40,20){0.04in}
\pscircle*(50,70){0.04in}
\pscircle*(60,30){0.04in}
\pscircle*(70,80){0.04in}
\pscircle*(80,40){0.04in}
\end{pspicture}

&\rule{0.2in}{0pt}&

\psset{xunit=0.009in, yunit=0.009in}
\psset{linewidth=0.005in}
\begin{pspicture}(0,0)(80,80)
\psaxes[dy=10,Dy=1,dx=10,Dx=1,tickstyle=bottom,showorigin=false,labels=none](0,0)(80,80)
\pscircle*(10,70){0.04in}
\pscircle*(20,50){0.04in}
\pscircle*(30,30){0.04in}
\pscircle*(40,10){0.04in}
\pscircle*(50,80){0.04in}
\pscircle*(60,60){0.04in}
\pscircle*(70,40){0.04in}
\pscircle*(80,20){0.04in}
\end{pspicture}

&\rule{0.2in}{0pt}&

\psset{xunit=0.009in, yunit=0.009in}
\psset{linewidth=0.005in}
\begin{pspicture}(0,0)(80,80)
\psaxes[dy=10,Dy=1,dx=10,Dx=1,tickstyle=bottom,showorigin=false,labels=none](0,0)(80,80)
\pscircle*(10,40){0.04in}
\pscircle*(20,80){0.04in}
\pscircle*(30,30){0.04in}
\pscircle*(40,70){0.04in}
\pscircle*(50,20){0.04in}
\pscircle*(60,60){0.04in}
\pscircle*(70,10){0.04in}
\pscircle*(80,50){0.04in}
\end{pspicture}

\end{tabular}
\end{center}
\caption{The four orientations of parallel alternations.}
\label{fig-par-alts}
\end{figure}

\begin{theorem}[Schmerl and Trotter~\cite{schmerl:critically-inde:}]\label{thm-schmerl-trotter}
Every simple permutation of length $n\ge 4$ which is not a parallel alternation contains a simple permutation of length $n-1$.  A simple parallel alternation of length $n\geq 4$ contains a simple permutation of length $n-2$.
\end{theorem}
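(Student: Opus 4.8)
The plan is to prove the two assertions separately, handling the easier one — parallel alternations — first. Since a parallel alternation of length at least four is simple only if its length is even (as recorded above), a simple parallel alternation of length $n\ge 4$ has either $n=4$ or $n=2k\ge 6$. If $n=4$ then $\pi$ is $2413$ or $3142$, and $\pi$ contains $12$, a simple permutation of length $n-2=2$. If $n=2k\ge 6$ then, up to the eight symmetries of the square, $\pi$ is the permutation whose plot interleaves two increasing runs of $k$ points separated by a single vertical line; deleting the topmost point of each run leaves the interleaving of two increasing runs of $k-1$ points, which is again a parallel alternation — simple of length $2k-2\ge 4$, its simplicity being part of the facts recorded above. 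Since ``contains a simple permutation of length $n-2$'' is preserved by the symmetries of the square, this settles the second assertion. (One can check in addition that simple parallel alternations of length at least five do not contain a simple permutation of length $n-1$, so the dichotomy is genuine; this is not needed for the statement.)

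For the first assertion I would argue by induction on $n$ via the contrapositive, so the base case $n=4$ is vacuous, the only simple permutations of that length being the parallel alternations $2413$ and $3142$. Assume $n\ge 5$, let $\pi$ be simple, and suppose that deleting (and renormalising) any single entry of $\pi$ yields a non-simple permutation; the goal is to show $\pi$ is a parallel alternation. The first step is a purely local observation: for an entry $p$ of $\pi$ write $\pi-p$ for the deletion and fix a \emph{shortest} proper interval $J_p$ of $\pi-p$. A shortest proper interval of any permutation has no proper interval of its own, so, as a pattern, $J_p$ is a simple permutation; since there are none of length three, $|J_p|=2$ or $|J_p|\ge 4$. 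Also $|J_p|\le n-2$ (a proper interval of $\pi-p$ cannot be all of it), so, viewed as a set of positions of $\pi$, $J_p$ omits $p$ and at least one further entry. Because $\pi$ itself is simple, $J_p$ is not an interval of $\pi$, and an elementary argument about how the positions and values of $J_p$ sit inside $\pi$ (of the kind used in the proof of Proposition~\ref{intervals-in-inflation}) shows that this can only be because the single missing entry $p$ interferes: the position of $p$ lies strictly between the extreme positions of $J_p$, or the value of $p$ lies strictly between the extreme values of $J_p$, and in either case $p$ is adjacent, in position or in value, to the pair of entries of $\pi$ whose coalescence $J_p$ witnesses.

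Next I would extract global structure, applying the local step especially to the four extreme entries of $\pi$ — its leftmost, rightmost, topmost, and bottommost entries — whose deletions are the most constrained. When $|J_p|=2$ the local analysis says precisely that the two entries of $\pi$ flanking $p$ in position have consecutive values with $p$ lying outside their value range, or dually that the entries immediately above and below $p$ in value lie at consecutive positions with $p$ lying outside their position range (with the expected modifications when $p$ is itself extreme). I would then show that the alternative $|J_p|\ge 4$, together with the ``mixed'' configurations, cannot hold for every entry once $n\ge 5$: whenever it does, the simple permutation $J_p$ has length strictly between $4$ and $n$, and feeding it into the inductive hypothesis — together with a short analysis, aided by the uniqueness of simple decompositions in Proposition~\ref{simple-decomp-unique}, of how $J_p$ embeds in $\pi$ — produces an entry of $\pi$ whose deletion is simple, contradicting the standing assumption. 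Once every entry is of the ``consecutive-neighbours'' type, a bookkeeping argument following $\pi$ from left to right (and, dually, from bottom to top) forces the two types to alternate and the entries to alternate between local maxima and local minima; the only simple permutations compatible with this are the parallel alternations, completing the induction.

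The main obstacle, where essentially all of the work sits, is this last step in two parts: ruling out the ``long deletion-interval'' case uniformly over all entries, and then turning the purely local statement ``every entry has consecutive neighbours on one side'' into the global conclusion that $\pi$ is a parallel alternation. Both parts require a careful but finite case analysis organised around the four extreme entries and the symmetries of the square, with the inductive hypothesis doing the essential work of closing off any configuration that exposes a smaller simple permutation together with a safely deletable point.
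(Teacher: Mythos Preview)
The paper does not prove Theorem~\ref{thm-schmerl-trotter}; it is quoted as a known result of Schmerl and Trotter and used as a black box (to establish Proposition~\ref{prop-vector-simples-basis}). So there is no ``paper's own proof'' to compare against, and what you have written is an attempt at an independent proof of a cited result.

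Your treatment of the second assertion (simple parallel alternations) is fine. The difficulty is the first assertion, and here your sketch has a genuine gap. You propose to rule out the case $|J_p|\ge 4$ by ``feeding $J_p$ into the inductive hypothesis'' to ``produce an entry of $\pi$ whose deletion is simple''. But the inductive hypothesis only tells you that the simple pattern $J_p$ (of length at most $n-2$) contains a simple pattern of length $|J_p|-1\le n-3$; it says nothing about deletions from $\pi$ itself, and there is no evident mechanism by which a simple subpattern of $J_p$ forces some $\pi-q$ to be simple. The invocation of Proposition~\ref{simple-decomp-unique} does not bridge this: that proposition concerns the uniqueness of the decomposition of a fixed permutation, not the effect of deleting a point from a larger simple permutation. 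As written, the inductive step does not close.

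The actual Schmerl--Trotter argument is not inductive in this sense. One shows directly that if $\pi$ is simple and \emph{every} one-point deletion is non-simple (``critically indecomposable''), then for each entry $p$ the interval created in $\pi-p$ can be taken of size exactly two; this yields, for every $p$, a specific pair of entries that become adjacent in both position and value once $p$ is removed. The resulting system of local constraints is then chased around $\pi$ (using, in effect, the four extreme entries and the symmetries you mention) to force the parallel-alternation shape. The step you are missing is precisely the one that eliminates $|J_p|\ge 4$ without appealing to induction: it requires comparing the intervals $J_p$ for different choices of $p$ and showing that a long $J_p$ would already exhibit a point $q$ with $\pi-q$ simple. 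If you want to repair your approach, that is the lemma to isolate and prove directly.
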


Theorem~\ref{thm-schmerl-trotter} leads rapidly to a sufficient condition for $\langle\C\rangle$ to be finitely based.  Given any permutation class $\C$, we let $\C^{+1}$ denote the class of \emph{one point extensions} of elements of $\C$, i.e., the class of all permutations $\pi$ which contain an entry whose removal yields a permutation in $\C$.

\begin{proposition}\label{prop-vector-simples-basis}
Let $\C$ be a class of permutations.  If $\C^{+1}$ is pwo, then the substitution closure $\langle\C\rangle$ is finitely based.
\end{proposition}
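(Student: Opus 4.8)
The plan is to argue by contradiction: assuming $\langle\C\rangle$ is not finitely based, I will construct an infinite antichain inside $\C^{+1}$. By Proposition~\ref{substitution-completion-basis} the basis $B$ of $\langle\C\rangle$ consists exactly of the minimal simple permutations not belonging to $\C$; in particular $B$ is an antichain, every element of $B$ is simple, and every proper subpermutation of an element of $B$ lies in $\langle\C\rangle$. So suppose $B$ is infinite.

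The subtle point --- and the main obstacle --- is that the Schmerl--Trotter Theorem~\ref{thm-schmerl-trotter} only shrinks a simple parallel alternation by two in length, whereas we need a decrease of exactly one in order to land in $\C^{+1}$ rather than in $\C^{+2}$. This is handled by observing that a basis, being an antichain, can contain only finitely many parallel alternations: for each of the four orientations the simple parallel alternations of that orientation form a chain under containment (deleting a suitable pair of extreme entries from such a permutation of length $n\ge 6$ yields the one of length $n-2$ of the same orientation, which is transparent from the explicit one-line forms, e.g.\ $2\,4\,\cdots\,(2k)\,1\,3\,\cdots\,(2k-1)$). Hence the set of all simple parallel alternations is a union of four chains together with finitely many permutations of length at most $4$, so it is pwo and only finitely many of its members lie in $B$. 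Removing those from $B$ leaves an infinite set $B'\subseteq B$ of simple permutations, none of which is a parallel alternation; each such permutation necessarily has length at least $5$.

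Finally, take any $\beta\in B'$, of length $n$. By Theorem~\ref{thm-schmerl-trotter} there is a simple permutation $\sigma\le\beta$ of length $n-1$. Since $\sigma$ is a proper subpermutation of the basis element $\beta$ of $\langle\C\rangle$, we have $\sigma\in\langle\C\rangle$, and since $\sigma$ is simple, Proposition~\ref{simples-in-substitution-completion} gives $\sigma\in\C$. As $\beta$ contains $\sigma$ and has exactly one more entry, deleting a single entry of $\beta$ yields $\sigma\in\C$, so $\beta\in\C^{+1}$. Thus $B'$ is an infinite antichain (being a subset of the antichain $B$) contained in $\C^{+1}$, contradicting the hypothesis that $\C^{+1}$ is pwo. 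Hence $B$ is finite, i.e.\ $\langle\C\rangle$ is finitely based.
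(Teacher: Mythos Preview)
Your proof is correct and follows essentially the same approach as the paper's own proof: both use Proposition~\ref{substitution-completion-basis} to identify the basis elements as minimal simple permutations outside $\C$, dispose of the parallel alternations as a finite exception (you justify this more carefully via the chain structure, whereas the paper simply asserts there are at most four), and then apply the Schmerl--Trotter Theorem to place every remaining basis element in $\C^{+1}$. The only cosmetic difference is that you frame the argument as a contradiction and route the conclusion $\sigma\in\C$ through Proposition~\ref{simples-in-substitution-completion}, while the paper argues directly from the minimality of $\beta$ among simple permutations not in $\C$.
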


\begin{proof}
The basis elements of $\langle\C\rangle$ are the minimal simple permutations not contained in $\C$ by Proposition~\ref{substitution-completion-basis}.  Clearly there are only finitely many (indeed, at most four) minimal parallel alternations not contained in $\C$.  By Theorem~\ref{thm-schmerl-trotter}, every 
other basis permutation $\beta$ of length $n$ contains a simple permutation $\sigma$ of length $n-1$ which by minimality belongs
to $\C$. Hence $\beta\in\C^{+1}$, and the proposition follows from the assumption that $\C^{+1}$ has no infinite antichains.
\end{proof}

\section{Geometric Grid Classes and Regular Languages}
\label{sec-geomgrid}

We say that a $\zpm$ matrix $M$ of size $t\times u$ is a \emph{partial multiplication matrix} if there exist \emph{column and row signs}
\[
c_1,\ldots,c_t,r_1,\ldots,r_u\in \{1,-1\}
\]
such that every entry $M_{k,\ell}$ is equal to either $c_kr_\ell$ or $0$.  Given a $\zpm$ matrix $M$, we form a new matrix $M^{\times 2}$ by replacing each $0$, $1$, and $-1$ by
\[
\fnmatrix{rr}{0&0\\0&0},
\fnmatrix{rr}{0&1\\1&0}
\mbox{, and }
\fnmatrix{rr}{-1&0\\0&-1},
\]
respectively.  It is easy to see that the standard figure of $M^{\times 2}$ is simply a scaled copy of the standard figure of $M$, and thus $\Geom(M^{\times 2})=\Geom(M)$ for all matrices $M$.  Moreover, the column and row signs $c_k=(-1)^k$, $r_\ell=(-1)^\ell$, show that $M^{\times 2}$ is a partial multiplication matrix, giving the following result.

\begin{proposition}[Albert, Atkinson, Bouvel, Ru\v{s}kuc and Vatter~\cite{albert:geometric-grid-:}]
\label{prop-geom-pmm}
Every geometric grid class is the geometric grid class of a partial multiplication matrix.
\end{proposition}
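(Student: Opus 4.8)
The proposition follows almost immediately from the matrix $M^{\times 2}$ introduced just above, so the plan is simply to spell out why this construction works. Given an arbitrary $\zpm$ matrix $M$, I would exhibit $M^{\times 2}$ as a partial multiplication matrix with $\Geom(M^{\times 2})=\Geom(M)$. The equality of geometric grid classes is the observation already recorded: replacing each nonzero cell of $M$ (carrying one diagonal segment across a unit box) by the appropriate $2\times 2$ block replaces that segment by two collinear segments spanning two adjacent boxes, and replacing each $0$ by a zero block contributes nothing, so the standard figure of $M^{\times 2}$ is precisely the standard figure of $M$ dilated by a factor of $2$ about the origin; dilation does not affect which permutations can be drawn, since the only requirement on the chosen points is that no two lie on a common horizontal or vertical line.

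For the partial-multiplication property I would produce the signs explicitly, setting $c_k=(-1)^k$ for the columns and $r_\ell=(-1)^\ell$ for the rows of $M^{\times 2}$, and then check, one replaced block at a time, that every entry equals $c_kr_\ell$ or $0$. Keeping to the paper's convention of indexing by column (left to right) then row (bottom to top), a cell in column $i$, row $j$ of $M$ equal to $1$ becomes the block $\fnmatrix{rr}{0&1\\1&0}$ occupying columns $2i-1,2i$ and rows $2j-1,2j$, whose two nonzero entries (both $1$) sit at positions $(2i-1,2j-1)$ and $(2i,2j)$; there $c_{2i-1}r_{2j-1}=(-1)^{2i-1}(-1)^{2j-1}=1$ and $c_{2i}r_{2j}=(-1)^{2i}(-1)^{2j}=1$, as required. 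A cell equal to $-1$ becomes $\fnmatrix{rr}{-1&0\\0&-1}$, whose two nonzero entries (both $-1$) sit at $(2i-1,2j)$ and $(2i,2j-1)$, and $c_{2i-1}r_{2j}=(-1)^{2i-1}(-1)^{2j}=-1$, $c_{2i}r_{2j-1}=(-1)^{2i}(-1)^{2j-1}=-1$. All remaining entries of $M^{\times 2}$ are $0$, so the defining condition holds everywhere and $M^{\times 2}$ is a partial multiplication matrix.

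There is essentially no obstacle: once $M^{\times 2}$ is written down the whole argument is a short parity check inside each $2\times 2$ block, and the only thing requiring care is to match the Cartesian column-then-row indexing so that the nonzero positions in each block are located correctly.
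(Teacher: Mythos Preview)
Your proposal is correct and follows exactly the approach the paper sketches in the paragraph preceding the proposition: exhibit $M^{\times 2}$, note that its standard figure is a scaled copy of that of $M$ so $\Geom(M^{\times 2})=\Geom(M)$, and verify with the signs $c_k=(-1)^k$, $r_\ell=(-1)^\ell$ that $M^{\times 2}$ is a partial multiplication matrix. Your explicit block-by-block parity check is just a spelled-out version of what the paper leaves as ``easy to see.''
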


One useful aspect of geometric grid classes is that they
provide a link between permutations and words.  
Before explaining this connection, we briefly review a few relevant facts about words.  Given a finite alphabet (merely a set of symbols) $\Sigma$, $\Sigma^\ast$ denotes the set of all words (i.e. finite sequences) over $\Sigma$.  The set $\Sigma^\ast$ is partially ordered by the \emph{subword} (or, \emph{subsequence}) order in which $v\le w$ if one can obtain $v$ from $w$ by deleting letters.

Subsets of $\Sigma^\ast$ are called \emph{languages}, and a particular type, \emph{regular languages}, play a central role in our work.  
The empty set, the singleton $\{\emptyword\}$ containing only the empty word, and the singletons $\{a\}$ for each $a\in\Sigma$ are all regular languages; moreover, given two regular languages $K,L\subseteq\Sigma^\ast$, their union $K\cup L$, their concatenation $KL=\{vw\st v\in K\mbox{ and }w\in L\}$, and the star $K^\ast=\{v^{(1)}\cdots v^{(m)}\st v^{(1)},\dots,v^{(m)}\in K\}$ are also regular.  
Every regular language can be obtained by a finite sequence of applications of these rules.
Alternatively, one may define regular languages as those accepted by  finite state automata,
but we will not require this description. 
A language $L$ is \emph{subword closed} if for every $w\in L$ and every subword $v\leq w$ we have $v\in L$.
The \emph{generating function} of the language $L$ is $\sum x^{|w|}$, where the sum is taken over all $w\in L$, and $|w|$ denotes the 
\emph{length} of $w$.  In addition to the above defining properties
of regular languages, we will only require few other basic facts:
\begin{itemize}
\item All finite languages are regular.
\item If $K$ and $L$ are regular languages then so are $K\cap L$ and $K\setminus L$.
\item Every subword closed language is regular.
\item The class of regular languages is closed under homomorphic images and inverse homomorphic images.
\item Every regular language has a rational generating function.
\end{itemize}
For a systematic introduction to regular languages we refer the reader to Hopcroft, Motwani, and Ullman~\cite{hopcroft:introduction-to:a}, or, for a more combinatorial slant, to Flajolet and Sedgewick~\cite[Section I.4 and Appendix A.7]{flajolet:analytic-combin:}.  The regularity of subword closed languages is folkloric, but is specifically proved in Haines~\cite{haines:on-free-monoids:}.

Returning to geometric grid classes, given a partial multiplication matrix $M$ with standard figure $\Lambda$ we define the \emph{cell alphabet} of $M$ as
\[
\Sigma=\{\cell{k}{\ell} \st M_{k,\ell}\neq 0\}.
\]
The permutations in $\Geom(M)$ will be represented, or encoded, by words over $\Sigma$.
Intuitively, the letter $\cell{k}{\ell}$ represents an instruction to place a point in an appropriate position on the line in the $(k,\ell)$ cell of $\Lambda$.  This appropriate position is determined as follows, and the whole process is depicted in Figure~\ref{fig-example-ggc-bij}.

\begin{figure}
\begin{center}
\psset{xunit=0.02in, yunit=0.02in}
\psset{linewidth=0.005in}
\begin{pspicture}(-3,-3)(120,80)
\psline[linecolor=black,linestyle=solid,linewidth=0.02in](0,80)(40,40)
\psline[linecolor=black,linestyle=solid,linewidth=0.02in](40,40)(80,0)
\psline[linecolor=black,linestyle=solid,linewidth=0.02in](40,40)(80,80)
\psline[linecolor=black,linestyle=solid,linewidth=0.02in](80,40)(120,0)
\psline[linecolor=black,linestyle=solid,linewidth=0.02in](80,40)(120,80)
\psline[linecolor=darkgray,linestyle=solid,linewidth=0.02in]{c-c}(0,0)(0,80)
\psline[linecolor=darkgray,linestyle=solid,linewidth=0.02in]{c-c}(40,0)(40,80)
\psline[linecolor=darkgray,linestyle=solid,linewidth=0.02in]{c-c}(80,0)(80,80)
\psline[linecolor=darkgray,linestyle=solid,linewidth=0.02in]{c-c}(120,0)(120,80)
\psline[linecolor=darkgray,linestyle=solid,linewidth=0.02in]{c-c}(0,0)(120,0)
\psline[linecolor=darkgray,linestyle=solid,linewidth=0.02in]{c-c}(0,40)(120,40)
\psline[linecolor=darkgray,linestyle=solid,linewidth=0.02in]{c-c}(0,80)(120,80)
\pscircle*(10,70){0.04in} 
\uput[240](10,70){$p_6$}

\pscircle*(35,45){0.04in} 
\uput[180](35,45){$p_1$}

\pscircle*(55,25){0.04in} 
\uput[45](55,25){$p_3$}

\pscircle*(75,75){0.04in} 
\uput[180](75,75){$p_7$}

\pscircle*(90,50){0.04in} 
\uput[-45](90,50){$p_2$}

\pscircle*(100,20){0.04in} 
\uput[45](100,20){$p_4$}

\pscircle*(105,65){0.04in} 
\uput[-45](105,65){$p_5$}

\psline[linecolor=black,linestyle=solid,linewidth=0.01in,arrowsize=0.05in]{<-c}(-3,1)(-3,39)
\psline[linecolor=black,linestyle=solid,linewidth=0.01in,arrowsize=0.05in]{c->}(-3,41)(-3,79)
\psline[linecolor=black,linestyle=solid,linewidth=0.01in,arrowsize=0.05in]{<-c}(1,-3)(39,-3)
\psline[linecolor=black,linestyle=solid,linewidth=0.01in,arrowsize=0.05in]{c->}(41,-3)(79,-3)
\psline[linecolor=black,linestyle=solid,linewidth=0.01in,arrowsize=0.05in]{c->}(81,-3)(119,-3)
\end{pspicture}
\end{center}
\caption[An example of a geometric grid class.]{In this geometric grid class, with column and row signs as shown, $\bij$ maps the word $\cell{1}{2}\cell{3}{2}\cell{2}{1}\cell{3}{1}\cell{3}{2}\cell{1}{2}\cell{2}{2}$ to $6327415$.}
\label{fig-example-ggc-bij}
\end{figure}

We say that the \emph{base line} of a \emph{column} of $\Lambda$ is the grid line to the left (resp., right) of that column if the corresponding column sign is $1$ (resp., $-1$).  Similarly, the \emph{base line} of a \emph{row} of $\Lambda$ is the grid line below (resp., above) that row if the corresponding row sign is $1$ (resp., $-1$).  We designate the intersection of the two base lines of a cell as its \emph{base point}.  Note that the base point is an endpoint of the line segment of $\Lambda$ lying in this cell.  As this definition indicates, we interpret the column and row signs as specifying the direction in which the columns and rows are `read'.  Owing to this interpretation, we represent the column and row signs in our figures by arrows, as shown in Figure~\ref{fig-example-ggc-bij}.

To every word $w=w_1\cdots w_n\in\Sigma^\ast$ we associate a permutation $\bij(w)$.  First we choose arbitrary distances $0<d_1<\cdots<d_n<1$.  For each $1\le i\le n$, we choose a point $p_i$ corresponding to $w_i$.  Let $w_i=a_{k\ell}$; the point $p_i$ is chosen from the line segment in cell $C_{k,\ell}$, at infinity-norm distance $d_i$ from the base point of this cell.  Finally, $\bij(w)$ denotes the permutation defined by the set $\{p_1,\dots,p_n\}$ of points.

It is a routine exercise to show that $\bij(w)$ does not depend on the particular choice of $d_1,\dots,d_n$, and thus $\bij\st\Sigma^\ast\to \Geom(M)$ is a well-defined mapping.  The basic properties of $\bij$ are described by the following result.

\begin{proposition}[Albert, Atkinson, Bouvel, Ru\v{s}kuc and Vatter~\cite{albert:geometric-grid-:}]
\label{prop-properties-of-bij}
The mapping $\bij$ is length-preserving, finite-to-one, onto, and order-preserving.
\end{proposition}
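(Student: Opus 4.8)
The plan is to isolate two basic facts about the construction of $\bij$ and then read all four properties off from them. The first fact is that for every word $w = w_1 \cdots w_n \in \Sigma^\ast$ the recipe really does yield a permutation of length exactly $n$. Having fixed distances $0 < d_1 < \cdots < d_n < 1$, the point $p_i$ lies on the segment of its cell at infinity-norm distance $d_i$ from that cell's base point; since the base point is an endpoint of that segment and the segment has slope $\pm 1$, the horizontal displacement of $p_i$ from its column's base line is exactly $d_i$ (signed according to the column sign), and symmetrically for the vertical displacement. Consequently two points in a common column have $x$-coordinates differing by $|d_i - d_j| \neq 0$, two points in distinct columns occupy disjoint open vertical strips, and the same holds for rows; so no two of $p_1,\dots,p_n$ share a row or column and $\bij(w)$ is a genuine permutation of length $n$. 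The second fact --- the ``routine exercise'' flagged just before the statement --- is that $\bij(w)$ does not depend on the chosen $d_i$: the left-to-right order of $p_i$ and $p_j$ is dictated by their columns when these differ and, when they coincide, by the column sign together with whether $i < j$ or $i > j$, never by the numerical values of $d_i$ and $d_j$, and likewise vertically.

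Granting these, the first three assertions are short. Length-preservation is the first fact. For the finite-to-one property, every word mapping to a permutation $\pi$ has length $|\pi|$, so $\bij^{-1}(\pi) \subseteq \Sigma^{|\pi|}$ is finite. For order-preservation it suffices, by induction on the number of deleted letters, to treat the deletion of a single letter $w_m$ from $w = w_1 \cdots w_n$. Realize $\bij(w)$ by $p_1, \dots, p_n$ using distances $d_1 < \cdots < d_n$; because $\bij$ ignores the choice of distances, the image of $w_1 \cdots w_{m-1} w_{m+1} \cdots w_n$ may be realized using the inherited increasing sequence $d_1, \dots, d_{m-1}, d_{m+1}, \dots, d_n$, which places precisely the points $p_1, \dots, p_{m-1}, p_{m+1}, \dots, p_n$ --- the original configuration with $p_m$ deleted. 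Since a permutation defined by a sub-configuration of a point set defining $\bij(w)$ is contained in $\bij(w)$, this gives the claim.

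The property requiring genuine geometric bookkeeping is surjectivity, and I would handle it as follows. Take $\pi \in \Geom(M)$ realized by points $q_1, \dots, q_n$ on the standard figure $\Lambda$, no two sharing a row or column. Each $q_j$ sits on the segment of a unique cell $C_{k_j,\ell_j}$ at infinity-norm distance $e_j \in (0,1)$ from its base point (nudge $q_j$ slightly along its segment if it happens to land on a corner, which does not disturb $\pi$). If $e_j = e_{j'}$ for $j \neq j'$ then $q_j$ and $q_{j'}$ necessarily lie in different cells, so a sufficiently small generic perturbation of one of them along its segment breaks that coincidence while preserving every pairwise horizontal and vertical comparison among the $q$'s, and hence $\pi$; after finitely many such moves all the $e_j$ are distinct. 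Letting $\rho$ be the permutation for which $e_{\rho(1)} < \cdots < e_{\rho(n)}$, setting $d_i = e_{\rho(i)}$, and taking $w = \cell{k_{\rho(1)}}{\ell_{\rho(1)}} \cdots \cell{k_{\rho(n)}}{\ell_{\rho(n)}}$, the map $\bij$ applied to $w$ with these distances recovers exactly $\{q_1, \dots, q_n\}$, so $\bij(w) = \pi$. The only real obstacle is this tie-breaking step; everything else is a matter of keeping the slope-$\pm 1$ geometry of $\Lambda$ straight.
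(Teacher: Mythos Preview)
The paper does not actually prove this proposition; it is imported from the cited reference \cite{albert:geometric-grid-:} and stated without proof here. Your self-contained argument is correct and follows essentially the approach one would find in that source: length-preservation and finite-to-one are immediate, order-preservation is handled by the standard single-letter-deletion trick exploiting the freedom in the choice of distances, and surjectivity is obtained by reading a word off from a drawing after perturbing to make the base-point distances distinct. One small point worth making explicit in the surjectivity step: when $e_j = e_{j'}$, the two points must lie in different columns \emph{and} different rows (equal distance within a shared column or row would force a shared coordinate, contradicting general position), which is exactly what guarantees that a sufficiently small perturbation of one of them leaves every pairwise comparison intact.
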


We then have the following more detailed version of Theorem~\ref{thm-geom-griddable-all-summary}.

\begin{theorem}[Albert, Atkinson, Bouvel, Ru\v{s}kuc and Vatter~\cite{albert:geometric-grid-:}]
\label{thm-geom-griddable-all}
Suppose that $\C\subseteq\Geom(M)$ is a permutation class and $M$ is a partial multiplication matrix with cell alphabet $\Sigma$.  Then the following hold:
\begin{enumerate}
\item[(i)] $\C$ is partially well-ordered.
\item[(ii)] $\C$ is finitely based.
\item[(iii)] There is a regular language $L\subseteq\Sigma^\ast$ such that $\bij$ restricts to a bijection $ L\rightarrow\C$.
\item[(iv)] There is a regular language $L_S$, contained in the regular language from (iii), such that $\bij$ restricts to a bijection between $L_S$ and the simple permutations in $\C$.
\end{enumerate}
\end{theorem}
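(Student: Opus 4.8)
The backbone of everything is the encoding $\bij:\Sigma^\ast\to\Geom(M)$ from Proposition~\ref{prop-properties-of-bij}, so the plan is to prove \textup{(iii)} first. The key observation is that for any permutation class $\C\subseteq\Geom(M)$ the preimage $\bij^{-1}(\C)$ is \emph{subword closed}: since $\bij$ is order-preserving and $\C$ is a downset, $v\le w$ and $\bij(w)\in\C$ force $\bij(v)\le\bij(w)\in\C$. Hence $\bij^{-1}(\C)$ is regular, by the folklore fact recalled in Section~\ref{sec-geomgrid}. Because $\bij$ is finite-to-one and onto rather than injective, I would then carve out a regular \emph{section}: fix a total order on $\Sigma$, let $\gridlex$ be the resulting ``length then lexicographic'' order on $\Sigma^\ast$, and let $E$ be the set of words that are $\gridlex$-minimal in their $\bij$-fibre. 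A separate analysis of the fibres is needed here: one shows that $\bij(v)=\bij(w)$ holds exactly when $w$ can be transformed into $v$ by repeatedly interchanging adjacent letters $\cell{k}{\ell}$ and $\cell{k'}{\ell'}$ with $k\ne k'$ and $\ell\ne\ell'$ (such an interchange cannot alter the drawn permutation, and no other coincidences occur), so that $\bij$ is, in effect, the canonical projection of a partially commutative (trace) monoid; the standard normal-form theorem for trace monoids then gives that $E$ is regular. Consequently $L:=\bij^{-1}(\C)\cap E$ is regular and $\bij$ restricts to a bijection $L\to\C$; since $\bij$ is length-preserving and regular languages have rational generating functions, this also yields the rational generating function of $\C$.

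Part \textup{(i)} needs only that $\bij$ is order-preserving and onto. As $\Sigma$ is finite, Higman's lemma makes $\Sigma^\ast$ well-quasi-ordered under the subword order; lifting an arbitrary infinite sequence from $\Geom(M)$ to $\Sigma^\ast$, Higman yields two comparable terms, and $\bij$ carries this comparability back down, so $\Geom(M)$ — and hence every subclass $\C$ — is well-quasi-ordered, which for permutation classes is exactly the assertion of \textup{(i)}. For \textup{(ii)} I would argue that $\C^{+1}$ is well-quasi-ordered and then observe that the basis $B$ of $\C$ is an antichain all of whose members lie in $\C^{+1}$: each $\beta\in B$ is recovered from one of its own proper subpermutations (delete, say, the first entry of $\beta$, which lands in $\C$ by minimality of $B$) by reinserting one point. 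An antichain inside a well-quasi-ordered set is finite, so $B$ is finite. To see $\C^{+1}$ is well-quasi-ordered it suffices to note $\C^{+1}\subseteq\Geom(M)^{+1}\subseteq\Geom(M')$, where $M'$ is built from $M$ by inserting a fresh generic column between every pair of consecutive columns (and at both ends), and likewise for rows: a one-point extension of any drawing on $M$ can place its new point inside one of the new cells. One can take $M'$ to be a partial multiplication matrix, and part \textup{(i)}, applied to $M'$, finishes the job.

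The real work is in \textup{(iv)}: exhibiting a regular $L_S\subseteq L$ such that $\bij$ restricts to a bijection from $L_S$ onto the simple permutations of $\C$. The plan is to build a finite automaton which reads $w\in E$ and nondeterministically guesses a witness that $\bij(w)$ fails to be simple — a proper interval $\theta$ of $\bij(w)$ — accepting precisely when it finds one; then $L_S=L$ minus the language of this automaton. The reason this is possible is that the cells of $M$ form a \emph{fixed finite} grid, so ``$\theta$ is an interval'', i.e. $\theta$ is contiguous in position and contiguous in value, becomes a condition the automaton can police with bounded memory: it records, for each of the finitely many columns and rows, whether that column (row) lies wholly before, wholly inside, or on the boundary of the guessed interval, together with the boundary data in the at most one partial column and one partial row on each side, and it checks consistency of these guesses while scanning $w$. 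The main obstacle, and the step that genuinely exploits the rigidity of geometric grid classes, is proving that this bounded bookkeeping actually certifies an interval: one must show, in the spirit of Proposition~\ref{intervals-in-inflation}, that any interval of $\bij(w)$ is pinned down by exactly such finite data, so that intervals of unbounded combinatorial complexity never need to be searched for. The remaining ingredients — closure of regular languages under union, intersection, and complement, and the removal of the finitely many words of length at most one — are then routine.
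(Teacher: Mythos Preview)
This theorem is not proved in the present paper: it is stated with attribution to Albert, Atkinson, Bouvel, Ru\v{s}kuc and Vatter~\cite{albert:geometric-grid-:} and used as a black box, so there is no ``paper's own proof'' to compare your proposal against.

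That said, your outline tracks the strategy actually carried out in~\cite{albert:geometric-grid-:}. Parts~(i) and~(ii) go through essentially as you describe (indeed the present paper records your $\C^{+1}$ step separately as Theorem~\ref{thm-geom-griddable-one-point-extension}). For~(iii) the cited paper does identify the fibres of $\bij$ with the classes of a trace monoid under exactly the commutation rule you state, and then extracts a regular transversal. For~(iv) the cited paper likewise detects non-simplicity by recognising, with bounded memory, a witness to a proper interval; your sketch captures the spirit, though the actual bookkeeping is more delicate than ``before/inside/boundary'' per column and row --- one must track how the guessed interval's two horizontal and two vertical boundaries interact with each individual cell, and it is the partial multiplication matrix hypothesis that keeps these interactions consistent enough to be finite-state. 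Your acknowledgement that this is ``the real work'' is accurate.
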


We also need the following result.

\begin{theorem}[Albert, Atkinson, Bouvel, Ru\v{s}kuc and Vatter~\cite{albert:geometric-grid-:}]
\label{thm-geom-griddable-one-point-extension}
If the class $\C$ is geometrically griddable, then the class $\C^{+1}$ is also geometrically griddable.
\end{theorem}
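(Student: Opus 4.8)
The plan is to reduce the statement to a containment of the form $\Geom(M)^{+1}\subseteq\Geom(M')$ and then to prove that containment by constructing $M'$ explicitly. First I would reduce to the case $\C=\Geom(M)$: if $\C\subseteq\Geom(M)$ and $\pi\in\C^{+1}$, then deleting the relevant entry of $\pi$ produces a permutation of $\C$, hence of $\Geom(M)$, so $\C^{+1}\subseteq\Geom(M)^{+1}$; since any class contained in a geometric grid class is geometrically griddable, it therefore suffices to produce, for an arbitrary $\zpm$ matrix $M$, a $\zpm$ matrix $M'$ with $\Geom(M)^{+1}\subseteq\Geom(M')$. By Proposition~\ref{prop-geom-pmm} I may assume $M$ is a $t\times u$ partial multiplication matrix with column signs $c_1,\dots,c_t$ and row signs $r_1,\dots,r_u$.

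For $M'$ I would triple each column of $M$, replacing column $k$ by three consecutive columns $A_k$, $B_k$, $C_k$ (all assigned the sign $c_k$), triple each row in the same way, prepend and append an extra buffer column and buffer row on each side, and then fill \emph{every} cell of the resulting matrix according to its column and row signs, so that $M'$ is again a multiplication matrix and the cell $A_k\times A_\ell$ carries the sign $c_kr_\ell=M_{k,\ell}$. Enlarging the set of nonzero cells can only enlarge the corresponding geometric grid class, and the cells $A_k\times A_\ell$ reproduce a scaled copy of the standard figure of $M$, so $\Geom(M)\subseteq\Geom(M')$. The columns $B_k$ and rows $B_\ell$ are meant to be empty buffers into which one extra point can be dropped, while the $C_k$ and $C_\ell$ provide room to push part of a cluster of points aside so as to free such a buffer.

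To prove $\Geom(M)^{+1}\subseteq\Geom(M')$, take $\pi$ together with a witnessing deletion: an entry at position $i$ of value $v$ whose removal yields $\sigma\in\Geom(M)$. Fix a drawing of $\sigma$ on the standard figure of $M$ and transplant it into the figure of $M'$ by placing, by default, every point lying in the $k$-th column of $M$ into the strip $A_k$ and every point lying in the $\ell$-th row into the strip $A_\ell$ (leaving the $B$- and $C$-strips empty). Re-inserting the deleted entry amounts to placing one further point $p$ whose horizontal position must fall strictly between the $\sigma$-points at positions $i-1$ and $i$ and whose vertical position must fall strictly between the $\sigma$-points of values $v-1$ and $v$ (with the obvious modifications, handled by the outer buffers, when $p$ is extreme). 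Here I use that the points of $\sigma$ lying in one column of $M$ occupy a contiguous block of positions, and likewise for rows and values. If $p$'s two position-neighbours lie in different columns of $M$, the left neighbour is the last point of its column, sitting in some $A_k$, so $p$ drops into the empty buffer $B_k$; if they lie in the same column $k$, re-draw $\sigma$ by moving every point of that column at position $\ge i$ from $A_k$ into $C_k$, preserving their relative order (and hence their values), which empties $B_k$ for $p$. Carry out the symmetric manoeuvre vertically. In every case $p$ ends up in a nonzero cell of $M'$, and the region available for $p$ there is a full subcolumn wide and a full subrow tall, so $p$ can be placed on that cell's monotone segment; the re-drawn $\sigma$ together with $p$ is a drawing of $\pi$, whence $\pi\in\Geom(M')$.

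The step requiring the most care is the ``crammed'' case, in which both position-neighbours of $p$ share a column of $M$ \emph{and} both value-neighbours share a row: the horizontal and vertical re-drawings must be performed simultaneously, and one must check that the outcome is still a legitimate drawing of $\sigma$ --- which it is, because only the relative order of the points matters and no point that is shifted ever crosses into another column or row of $M$. One should also note that $M'$, being a $\zpm$ matrix, is of exactly the form to which Theorem~\ref{thm-geom-griddable-all-summary} applies, so that $\Geom(M')$ --- and with it $\C^{+1}$ --- is finitely based, partially well-ordered, and rationally enumerated. I would expect the only genuine subtlety to be that merely inserting a single buffer between the columns and rows of $M$, without the threefold subdivision, is not enough: when $p$ has to be inserted into the interior of a cluster of points filling one cell of $M$, it is precisely the extra $C$-strips that create the needed gap.
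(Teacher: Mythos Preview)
The paper does not prove this theorem; it is quoted from the cited reference~\cite{albert:geometric-grid-:} without argument, so there is no in-paper proof to compare your attempt against.

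Your argument is essentially correct, and the tripling-plus-filling construction is a legitimate way to produce $M'$. The reduction to $\C=\Geom(M)$, the use of Proposition~\ref{prop-geom-pmm}, and the case analysis locating the new point $p$ are all fine. Making $M'$ a \emph{full} multiplication matrix is heavier than strictly necessary, but it neatly guarantees that the cell $(B_k,B_\ell)$ receiving $p$ is always nonzero, even when $M_{k,\ell}=0$.

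The one step that deserves more care is the assertion that the redrawn gridding of $\sigma$ on $M'$ still represents~$\sigma$. Your parenthetical ``(and hence their values)'' is doing real work: moving a point from column $A_k$ to $C_k$ while keeping it in row $A_\ell$ places it on a \emph{different} line segment, so it is not automatic that its height relative to the other points of row $A_\ell$ is preserved. The clean justification uses the word encoding $\bij$ of Section~\ref{sec-geomgrid}: the same linear order on the entries that encoded the original $M$-gridded $\sigma$ also encodes the refined $M'$-gridding, because each of $A_k$ and $C_k$ carries a subset of the entries of column~$k$, inherits the sign $c_k$, and the new column boundary respects the position split you imposed (and symmetrically for rows). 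Inserting $p$ anywhere into this word then realises $\pi$, since $p$ is alone in its $M'$-column and $M'$-row and hence unconstrained. Your final paragraph gestures at this (``only the relative order of the points matters''), but making the appeal to the encoding explicit would close the gap. You should also specify signs for the four outer buffer strips; any choice works, since at most the single point $p$ ever lands there.
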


We end this section with a technical note.  
The mapping $\bij$ `jumbles' the entries, in the sense that the $i$th letter of a word $w\in\Sigma^\ast$
typically does not correspond to the $i$th entry in the permutation $\bij(w)$.
To control for this, we define the \emph{index correspondence} $\psi$ associated to the pair $(\bij,w)$ by letting $\psi(i)$ denote the index of the letter of $w$ which corresponds to the $i$th entry of $\bij(w)$.

\section{Finite Bases and Partial Well-Order}
\label{sec-infinite-finite-bases-pwo}

Two general types of classes will be under investigation in this paper:
\begin{itemize}
\item[(1)]
subclasses of substitution closures of geometric grid classes; and
\item[(2)]
subclasses of inflations of geometric grid classes by strongly rational classes.
\end{itemize}
In this section we establish the pwo property for both these types.
As a consequence we deduce that all classes of type (1) are finitely based.
Note that we cannot hope to have a general finite basis result for type (2), since strongly rational classes need not be
themselves finitely based (see Section \ref{sec-conclusion}).

It follows immediately from Proposition~\ref{prop-vector-simples-basis}, Theorem \ref{thm-geom-griddable-all} (ii) and Theorem~\ref{thm-geom-griddable-one-point-extension} that $\langle\Geom(M)\rangle$ is finitely based.  The basis of $\C\subseteq\langle\Geom(M)\rangle$ therefore consists of an antichain in $\langle\Geom(M)\rangle$ together with, possibly, some of the finitely many basis elements of $\langle\Geom(M)\rangle$ itself.  Therefore we need only prove that $\langle\Geom(M)\rangle$ is pwo.  Morally, owing to the tree-like structure of nested substitutions, this is a consequence of Kruskal's Tree Theorem~\cite{kruskal:well-quasi-orde:}.  However, there are several technical 
issues that would need to be resolved in such an approach,
so we give a proof from first principles.

Given a poset $(P,\le)$, consider the set $P^\ast$ of words with letters from $P$.  
The \emph{generalised subword order} on $P^\ast$ is defined by stipulating that $v=v_1\dots v_k$ is contained in
$w=w_1\dots w_n$  if $w$ has a subsequence $w_{i_1}w_{i_2}\cdots w_{i_k}$ such that $v_j\le w_{i_j}$ for all $j$.
Note that the usual subword ordering on $\Sigma^\ast$ is obtained as a special case where the letters of $\Sigma$ 
are taken to be an antichain.  
We then have the following result from \cite{higman:ordering-by-div:}.

\newtheorem*{higmans-theorem}{\rm\bf Higman's Theorem}
\begin{higmans-theorem}
If $(P,\le)$ is pwo then $P^*$, ordered by the subword order, is also pwo.
\end{higmans-theorem}

We can immediately deduce the pwo property for inflations of geometrically griddable classes.

\begin{proposition}\label{prop-inflate-pwo}
If $\C$ is a geometrically griddable class and $\U$ is a pwo class then the inflation $\C[\U]$ is pwo.
\end{proposition}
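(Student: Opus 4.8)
The plan is to encode permutations in $\C[\U]$ as words over an enlarged alphabet and then apply Higman's Theorem for the generalised subword order, leveraging the regular-language encoding of $\C$ guaranteed by Theorem~\ref{thm-geom-griddable-all}(iii). Concretely, by Proposition~\ref{prop-geom-pmm} we may assume $\C\subseteq\Geom(M)$ for a partial multiplication matrix $M$ with cell alphabet $\Sigma$, and there is a (subword closed, regular) language $L\subseteq\Sigma^\ast$ with $\bij$ restricting to a bijection $L\to\C$. The key idea is to form the poset $P=\Sigma\times\U$, where $\U$ is ordered by the permutation containment order and $\Sigma$ is an antichain, so that $(a,\alpha)\le(b,\beta)$ iff $a=b$ and $\alpha\le\beta$. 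Since $\U$ is pwo and $\Sigma$ is finite, $P$ is pwo (a finite union of pwo posets is pwo), and hence by Higman's Theorem $P^\ast$ is pwo under the generalised subword order.

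The heart of the argument is to set up an order-preserving surjection from an appropriate sub-poset of $P^\ast$ onto $\C[\U]$. Given $\pi\in\C[\U]$, write $\pi=\sigma[\alpha_1,\dots,\alpha_m]$ with $\sigma\in\C$ and $\alpha_i\in\U$; choose a word $w=w_1\cdots w_m\in L$ with $\bij(w)=\sigma$, and let $\psi$ be the associated index correspondence so that $w_{\psi^{-1}(i)}$ is the cell letter for $\sigma(i)$. Then map $\pi$ to the word $(w_{\psi^{-1}(1)},\alpha_1)(w_{\psi^{-1}(2)},\alpha_2)\cdots(w_{\psi^{-1}(m)},\alpha_m)\in P^\ast$, i.e. we attach to the cell letter for the $i$th entry of $\sigma$ the inflating permutation $\alpha_i$. (Equivalently one may attach $\alpha_i$ to $w_i$ directly and track the permutation of positions; either bookkeeping works.) The first step is to check this is well-defined enough for our purposes — it need not be injective, but we do need that it lands in a subset of $P^\ast$ on which the subword order is compatible. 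The second and crucial step is the monotonicity claim: if $\pi'\le\pi$ in $\C[\U]$, then some word representing $\pi'$ is $\le$ some word representing $\pi$ in the generalised subword order on $P^\ast$.

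The monotonicity step is where the work lies, and I expect it to be the main obstacle. The subtlety is that a subpermutation $\pi'$ of $\pi=\sigma[\alpha_1,\dots,\alpha_m]$ need not be obtained by simply picking a subpermutation of $\sigma$ and subpermutations of the corresponding $\alpha_i$'s — a single entry $\sigma(i)$ can be ``split'' so that entries from $\alpha_i$ land in different blocks of $\pi'$. To handle this, I would argue that $\pi'$ has a $\U$-decomposition $\pi'=\sigma'[\alpha'_1,\dots,\alpha'_{m'}]$ where $\sigma'\le\sigma$ (taking, say, the $\U$-profile of $\pi'$, which by Proposition~\ref{prop-U-profile} is minimal and in particular is $\le$ any $\sigma''$ with $\pi'$ a $\U$-inflation of $\sigma''$, hence $\le\sigma$ since $\sigma'\le\sigma$ follows from $\pi'\le\pi$ together with the fact that $\C$ is a class; more carefully, one picks for each entry of $\pi'$ the block of $\pi$ it came from, reads off the induced subpermutation $\sigma''$ of $\sigma$, notes $\sigma''\in\C$ and $\pi'$ is a $\U$-inflation of $\sigma''$, and each $\alpha''_i$ obtained is a subpermutation of the corresponding $\alpha_j\in\U$). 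Then, because $\bij$ is order-preserving and $L$ is subword closed (Theorem~\ref{thm-geom-griddable-all}(iii) and Proposition~\ref{prop-properties-of-bij}), a word $w'\in L$ representing $\sigma''$ can be taken to be a subword of $w$, and the index correspondences can be made compatible so that the letter-$\alpha$ pairs line up: $(w'_{\text{pos}},\alpha''_i)\le(w_{\text{pos}},\alpha_j)$ exactly when $w'_{\text{pos}}=w_{\text{pos}}$ and $\alpha''_i\le\alpha_j$, both of which hold by construction. This yields a word for $\pi'$ that is $\le$ in $P^\ast$ the word for $\pi$, as required.

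Finally, putting the pieces together: $\C[\U]$ is the image of a subset of the pwo poset $P^\ast$ under a map which, by the monotonicity step, has the property that preimages can be chosen to respect the order. An infinite antichain in $\C[\U]$ would pull back to an infinite set of words in $P^\ast$ no two of which are comparable in the generalised subword order (using that comparability of words would force comparability of the permutations they represent, via $\bij$ being order-preserving on the $\sigma$-part and $\le$ on the $\alpha$-parts giving $\le$ on the inflations), contradicting Higman's Theorem; since permutation classes contain no infinite descending chains, pwo follows. The only genuinely delicate point, to be treated carefully in the full proof, is the compatibility of index correspondences in the monotonicity step — ensuring that the subword of $L$ witnessing $\sigma''\le\sigma$ can be chosen so that its letters pair up with exactly the blocks $\alpha''_i\le\alpha_j$ intended.
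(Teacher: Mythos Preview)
Your overall strategy---encode $\C[\U]$ by words over $\Sigma\times\U$ and apply Higman's Theorem---is exactly the paper's, but there is a real gap in how you set up the encoding, and separately the ``monotonicity step'' you labour over is the wrong direction and unnecessary.

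The encoding must be kept in \emph{word order}, not entry order. You take the $i$th symbol to be $(w_{\psi^{-1}(i)},\alpha_i)$, listing cells in the left-to-right order of the entries of $\sigma$; the sequence of first components is then a \emph{reordering} of $w$, not $w$ itself, so your appeal to ``$\bij$ being order-preserving on the $\sigma$-part'' simply does not apply---$\bij$ of that reordered sequence is not $\sigma$. In fact incomparable permutations can receive identical entry-order encodings: with the single-row matrix $M=(1,-1)$ and the griddings $\bij(a_{11}a_{21})=12$, $\bij(a_{21}a_{11})=21$, both $12$ and $21$ have entry-order cell sequence $a_{11},a_{21}$, so your implication ``comparable encodings $\Rightarrow$ comparable permutations'' fails outright. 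The paper avoids this by defining the map in the forward direction and in word order,
\[
\bij^{\U}\colon(\Sigma\times\U)^\ast\to\Geom(M)[\U],\qquad (w_1,\alpha_1)\cdots(w_m,\alpha_m)\longmapsto\bij(w)[\alpha_{\psi(1)},\dots,\alpha_{\psi(m)}],
\]
so that a generalised-subword relation on $(\Sigma\times\U)^\ast$ projects to an honest subword relation on $\Sigma^\ast$, to which the order-preservation of $\bij$ genuinely applies, and each $\alpha$ is automatically matched with the correct block.

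Once $\bij^{\U}$ is surjective and order-preserving you are finished: an infinite antichain in $\Geom(M)[\U]$ lifts to any choice of preimages, Higman supplies some $v_i\le v_j$, and applying $\bij^{\U}$ contradicts incomparability. Your ``monotonicity step'' (if $\pi'\le\pi$ then some encoding of $\pi'$ lies below some encoding of $\pi$) is never invoked in this argument; the worries about blocks splitting, $\U$-profiles, and compatible index correspondences can all be deleted. Nor is there any need to restrict to the language $L$ or to choose canonical encodings---the paper works with the whole of $(\Sigma\times\U)^\ast$.
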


\begin{proof}
It suffices to prove that $\Geom(M)[\U]$ is pwo for all partial multiplication matrices $M$.  Suppose that the cell alphabet of $M$ is $\Sigma$ and consider the map
\[
\bij^\U:\left(\Sigma\times\U\right)^\ast\rightarrow\Geom(M)[\U]
\]
which sends $(w_1,\alpha_1)\cdots(w_m,\alpha_m)$ to $\bij(w_1\cdots w_m)[\alpha_{\psi(1)},\dots,\alpha_{\psi(m)}]$ where
$\bij$ is the encoding mapping, and $\psi$ is the index correspondence associated to $(\bij,w)$, both of which have 
been introduced in Section \ref{sec-geomgrid}.
This maps onto $\Geom(M)[\U]$ because $\bij$ maps onto $\Geom(M)$ by Proposition \ref{prop-properties-of-bij}. 
Order  $\left(\Sigma\times\U\right)^\ast$ as follows: $\Sigma\times\U$ is ordered by the direct product ordering, where $\Sigma$ is considered to be an antichain, and then  $\left(\Sigma\times\U\right)^\ast$ is ordered by the generalised subword ordering.
 Using the fact that $\bij$ is order-preserving (Proposition~\ref{prop-properties-of-bij} again), it can be seen that $\bij^\U$ is order-preserving as well.  
Since $\U$ is pwo, $\Sigma\times\U$ is also pwo (as it's simply a union of $|\Sigma|$ copies of $\U$) and thus $\left(\Sigma\times\U\right)^\ast$ is pwo by Higman's Theorem.
It immediately follows that $\Geom(M)[\U]$ is pwo as well.  
\end{proof}

In order to show that substitution closures of geometrically griddable classes are pwo, we borrow a few ideas from the study of posets.  For the purposes of this discussion, we restrict ourselves to posets (such as the poset of all permutations) which are \emph{well-founded}, meaning that they have no infinite strictly decreasing sequences.  Gustedt~\cite{gustedt:finiteness-theo:} defines a partial order on the infinite antichains of a poset, implicit in Nash-Williams~\cite{nash-williams:on-well-quasi-o:}, in which $A\preceq B$ if for every $b\in B$ there exists $a\in A$ such that $a\le b$.  Note that $\preceq$ reverses the set inclusion order: if two infinite antichains satisfy $B\subseteq A$, then $A\preceq B$.

\begin{proposition}[Gustedt~{\cite[Lemma 5]{gustedt:finiteness-theo:}}]\label{prop-minimal-antichain}
For a well-founded poset $P$, the poset of infinite antichains of $P$ under $\preceq$ is also well-founded.
In particular, for every infinite antichain $A\subseteq P$ there is a $\preceq$-minimal infinite antichain $B$
such that $B\preceq A$.
\end{proposition}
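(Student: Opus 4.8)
The plan is to reduce the claim to an easy but somewhat subtle application of well-foundedness of $P$ itself, exploiting the fact that $\preceq$ reverses set inclusion on the distinguished ``small'' elements of an antichain. First I would unpack what an infinite strictly $\preceq$-decreasing chain $A_0 \succ A_1 \succ A_2 \succ \cdots$ would have to look like: by definition, for each $n$ and each $b \in A_{n+1}$ there is some $a \in A_n$ with $a \le b$. The natural move is to pick, for each antichain $A_n$, its set of $\le$-minimal elements and track how those propagate down the chain. Since $P$ is well-founded, every nonempty subset has minimal elements, so replacing each $A_n$ by $\{a \in A_n : a \text{ is } \le\text{-minimal in } \bigcup_m A_m\}$ or a similar device lets one build, by König-style / dependent-choice reasoning, an infinite strictly decreasing sequence in $(P,\le)$ out of a genuinely strictly decreasing $\preceq$-chain, contradicting well-foundedness. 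The second sentence of the proposition then follows formally: given any infinite antichain $A$, the set of infinite antichains $B$ with $B \preceq A$ is nonempty (it contains $A$), so by the well-foundedness just established it has a $\preceq$-minimal element.

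Concretely, here is the chain of steps I would carry out. (1) Observe that $\preceq$ is a genuine partial order on infinite antichains modulo the appropriate equivalence, or at least a well-founded quasi-order, and that $B \subseteq A \implies A \preceq B$; record that $A \preceq A$ always. (2) Suppose for contradiction that $A_0 \succ A_1 \succ A_2 \succ \cdots$ is an infinite strictly decreasing chain. Strictness means that for each $n$ there is a witness $w_n \in A_n$ such that no element of $A_{n+1}$ lies below $w_n$ — equivalently, $w_n$ is a reason $A_{n+1} \not\preceq A_n$ would fail, so I should pick the witnesses carefully from the correct side; I would take $w_{n} \in A_{n}$ not dominated from within $A_{n+1}$, using that $A_n \preceq A_{n+1}$ holds while $A_{n+1} \preceq A_n$ fails. (3) From the relation $A_n \preceq A_{n+1}$ I get, for every element of $A_{n+1}$, a dominating element of $A_n$; I would use these to assemble a ``descent tree'' whose infinite branch (obtained from König's lemma, the tree being finitely branching once we restrict attention below a fixed element, or from dependent choice in general) yields an infinite $\le$-decreasing sequence in $P$. (4) This contradicts well-foundedness of $P$, completing the proof of the first assertion; the ``in particular'' clause is then immediate as noted above.

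The main obstacle — and the place where care is genuinely needed — is step (3): turning a strictly $\preceq$-decreasing chain of \emph{antichains} into a strictly $\le$-decreasing sequence of \emph{elements}. The relation $A_n \preceq A_{n+1}$ only gives, for each $b \in A_{n+1}$, \emph{some} $a \in A_n$ with $a \le b$; there is no canonical choice, and different elements of $A_{n+1}$ may be dominated only by the same element of $A_n$, so one does not automatically get an injection or a single coherent descending thread. The fix is the standard one used in Nash-Williams--style arguments: build a tree whose nodes at level $n$ are elements $a \in A_n$ that dominate some element of $A_{n+1}$ which in turn dominates $\ldots$, with edges recording the domination $a_{n+1} \le a_n$; strictness of the chain guarantees the tree is infinite, and an infinite branch gives a weakly decreasing sequence $a_0 \ge a_1 \ge a_2 \ge \cdots$ which must be strictly decreasing infinitely often (if it were eventually constant at some $a$, then $a$ would belong to infinitely many of the $A_n$, but distinct antichains in a $\preceq$-chain can be compared and one checks this forces a collapse contradicting strictness). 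I would present this carefully, since it is the only substantive content; everything else is bookkeeping with the definitions. Alternatively, one could simply cite Gustedt~\cite{gustedt:finiteness-theo:} or Nash-Williams~\cite{nash-williams:on-well-quasi-o:} for this lemma and omit the proof, which is in fact what the excerpt's attribution suggests the authors will do.
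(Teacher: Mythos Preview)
The paper does not prove this proposition; it simply cites Gustedt, exactly as you anticipate in your final sentence.

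Your sketch, however, has a genuine gap, and in fact the strategy cannot work as written. You aim to derive a contradiction from an infinite strictly $\preceq$-descending chain $A_0\succ A_1\succ\cdots$, but such chains \emph{do} exist over well-founded $P$. Take $P=\{c_i,d_i:i\ge 0\}$ with $d_i<c_i$ the only nontrivial relations, and set $A_n=\{d_0,\dots,d_{n-1},c_n,c_{n+1},\dots\}$. Then $A_{n+1}\preceq A_n$ (every element of $A_n$ has something below it in $A_{n+1}$: replace $c_n$ by $d_n$), while $A_n\not\preceq A_{n+1}$ (nothing in $A_n$ lies below $d_n$), so $A_0\succ A_1\succ\cdots$ strictly. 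This same example kills your step~(3): any descent $a_0\ge a_1\ge\cdots$ with $a_k\in A_k$ stabilises (say at some $d_i$), and that in no way ``forces a collapse contradicting strictness'', because the infinitely many \emph{other} elements of the $A_n$ carry the strict decrease. Incidentally, you have the relation reversed throughout (2)--(3): from $A_n\succ A_{n+1}$ one gets $A_{n+1}\preceq A_n$, not $A_n\preceq A_{n+1}$.

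What is actually true, and what the paper uses, is the second sentence: below every infinite antichain there is a $\preceq$-minimal one. (The first sentence should be read in this weaker sense rather than as ``no infinite strictly descending chain''.) Proving this requires a constructive argument in the spirit of Nash-Williams' minimal bad sequence: one builds $B=\{b_1,b_2,\dots\}$ element by element, at each step using well-foundedness of $P$ to choose $b_i$ $\le$-minimal among the elements of all infinite antichains $C\preceq A$ extending $\{b_1,\dots,b_{i-1}\}$, and then argues that the resulting $B$ is $\preceq$-minimal. Your tree/K\"onig route does not reach this.
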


\begin{proposition}[Gustedt~{\protect\cite[Theorem 6]{gustedt:finiteness-theo:}}]\label{prop-preceq-min-pcl-pwo}
Suppose that the poset $P$ is well-founded and that the antichain $A$ is $\preceq$-minimal.  Then the \emph{proper closure} of $A$,
\[
A^{\suplessthan}=\{b : b < a\mbox{ for some $a\in A$}\},
\]
is pwo.
\end{proposition}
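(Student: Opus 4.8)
The plan is to argue by contradiction, using the $\preceq$-minimality of $A$ to convert any hypothetical infinite antichain lying inside $A^{\suplessthan}$ into an infinite antichain that is strictly $\preceq$-below $A$. Since $P$ is well-founded, so is its subset $A^{\suplessthan}$, which therefore contains no infinite strictly descending sequence; hence it suffices to prove that $A^{\suplessthan}$ contains no infinite antichain.

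So suppose $C\subseteq A^{\suplessthan}$ is an infinite antichain. Every $c\in C$ satisfies $c<a$ for some $a\in A$, so I would split $A$ into the set $A'$ of elements lying strictly above some member of $C$ and the complementary set $A''=A\setminus A'$. The key observation is that no $c\in C$ lies strictly above any $a''\in A''$: otherwise, picking $a'\in A'$ with $c<a'$, one would get $a''<a'$, contradicting the fact that $A$ is an antichain. I would then set $D=C\cup A''$ and take $B$ to be the set of minimal elements of $D$; by well-foundedness $B$ is a nonempty antichain and every $d\in D$ satisfies $b\le d$ for some $b\in B$. Because $C$ is an antichain and, by the key observation, no $c\in C$ lies strictly above any element of $A''$, each $c\in C$ is already minimal in $D$, so $C\subseteq B$ and $B$ is infinite.

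Next I would verify $B\preceq A$: for $a\in A''\subseteq D$ there is $b\in B$ with $b\le a$, and for $a\in A'$ there is $c\in C\subseteq B$ with $c<a$; so indeed $B\preceq A$. The $\preceq$-minimality of $A$ then forces $A\preceq B$, meaning that every element of $B$ — in particular every $c\in C$ — lies above some element of $A$. Fixing such a $c$ and a witnessing $a\in A$ with $a\le c$, and recalling that $c\in A^{\suplessthan}$ gives some $a'\in A$ with $c<a'$, we obtain $a\le c<a'$, hence $a<a'$ with both $a,a'\in A$, contradicting that $A$ is an antichain. This contradiction finishes the proof.

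The main obstacle is the construction of $B$: one cannot simply take $B=C$, since $C$ need not $\preceq$-dominate $A$ from below — the ``unused'' elements $A''$ may have no member of $C$ beneath them — so $C$ must be padded by $A''$, and the real content is checking that this padding preserves the antichain property (handled by passing to minimal elements) while keeping $C$ intact inside $B$ (handled by the key observation that elements of $C$ remain minimal in $D$). Everything else is routine manipulation of the definition of $\preceq$.
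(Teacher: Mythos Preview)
Your argument is correct. The paper does not supply its own proof of this proposition; it is quoted directly from Gustedt with a citation and then used as a black box in the proof of Theorem~\ref{thm-geom-simples-pwo-basis}. Your proof is the standard one: build an infinite antichain $B\preceq A$ from the hypothetical bad antichain $C$ together with the ``unused'' part $A''$ of $A$, then use $\preceq$-minimality to force $A\preceq B$ (equivalently $B=A$, since $\preceq$ is antisymmetric on antichains), which traps each $c\in C$ between two elements of $A$. The only delicate step is ensuring $C\subseteq B$ after passing to minimal elements of $D=C\cup A''$, and your key observation handles exactly this.
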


As an easy consequence we now have:

\begin{theorem}\label{thm-geom-simples-pwo-basis}
If the class $\C$ is geometrically griddable, then every subclass of $\langle\C\rangle$ is finitely based and pwo.
\end{theorem}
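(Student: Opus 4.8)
The plan is to reduce the pwo claim to the statement that $\langle\Geom(M)\rangle$ itself is pwo, and then to obtain that from the poset-theoretic machinery of Gustedt recalled above (Propositions~\ref{prop-minimal-antichain} and \ref{prop-preceq-min-pcl-pwo}), combined with Higman's Theorem. The finite-basis part requires no work: by Proposition~\ref{prop-vector-simples-basis}, Theorem~\ref{thm-geom-griddable-all}(ii), and Theorem~\ref{thm-geom-griddable-one-point-extension}, the class $\langle\Geom(M)\rangle$ is finitely based, and hence (as observed in the text at the start of Section~\ref{sec-infinite-finite-bases-pwo}) the basis of any subclass $\C\subseteq\langle\Geom(M)\rangle$ consists of an antichain inside $\langle\Geom(M)\rangle$ together with possibly some of the finitely many basis elements of $\langle\Geom(M)\rangle$ itself; so finite-basedness of every subclass is immediate once we know $\langle\Geom(M)\rangle$ is pwo. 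Likewise, a subclass of a pwo class is pwo, so everything hinges on $\langle\Geom(M)\rangle$ being pwo.

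To prove $\langle\Geom(M)\rangle$ is pwo, suppose for contradiction it contains an infinite antichain $A$. Since the permutations form a well-founded poset, Proposition~\ref{prop-minimal-antichain} gives a $\preceq$-minimal infinite antichain $B$ with $B\preceq A$; note $B\subseteq\langle\Geom(M)\rangle$ because $\langle\Geom(M)\rangle$ is a downset and each element of $B$ lies below some element of $A$. By Proposition~\ref{prop-preceq-min-pcl-pwo} the proper closure $B^{\suplessthan}$ is pwo. The key structural observation is now this: every permutation in $B$, being nontrivial (infinite antichains contain no singletons for $n\le 1$), decomposes by Proposition~\ref{simple-decomp-unique} as $\sigma[\alpha_1,\dots,\alpha_m]$ with $\sigma$ simple; since $\sigma\le\pi\in\langle\Geom(M)\rangle$ the simple permutation $\sigma$ lies in $\Geom(M)$ by Proposition~\ref{simples-in-substitution-completion}, and each $\alpha_i$ is a proper subpermutation of $\pi$, hence lies in $B^{\suplessthan}$. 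Thus every element of $B$ is a $B^{\suplessthan}$-inflation of some simple permutation of $\Geom(M)$.

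The final step is to encode this: letting $\Sigma$ be the cell alphabet of $M$ (we may take $M$ a partial multiplication matrix by Proposition~\ref{prop-geom-pmm}) and $L_S\subseteq\Sigma^\ast$ the regular language coding the simple permutations of $\Geom(M)$ via $\bij$ (Theorem~\ref{thm-geom-griddable-all}(iv)), one builds a map $(\Sigma\times B^{\suplessthan})^\ast\to\langle\Geom(M)\rangle$ sending $(w_1,\beta_1)\cdots(w_m,\beta_m)$ to $\bij(w_1\cdots w_m)[\beta_{\psi(1)},\dots,\beta_{\psi(m)}]$ when $w_1\cdots w_m\in L_S$ — exactly as in the proof of Proposition~\ref{prop-inflate-pwo}, using the index correspondence $\psi$. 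Ordering $\Sigma$ as an antichain, $B^{\suplessthan}$ by its permutation order, $\Sigma\times B^{\suplessthan}$ by the product order, and $(\Sigma\times B^{\suplessthan})^\ast$ by the generalised subword order, this map is order-preserving (by order-preservation of $\bij$), and its image contains all of $B$. Since $B^{\suplessthan}$ is pwo, $\Sigma\times B^{\suplessthan}$ is pwo, so $(\Sigma\times B^{\suplessthan})^\ast$ is pwo by Higman's Theorem; its order-preserving image therefore has no infinite antichain, contradicting $B\subseteq\text{image}$ being an infinite antichain. I expect the main technical obstacle to be the bookkeeping in this last step — checking that the encoding map really is order-preserving, that one may indeed restrict the first coordinates to $L_S$ (i.e.\ that each $\pi\in B$ admits a coding word whose letters spell out a word of $L_S$, which is where Theorem~\ref{thm-geom-griddable-all}(iv) and the uniqueness in Proposition~\ref{simple-decomp-unique} combine), and that a subpermutation relation among the output permutations is witnessed by the generalised-subword relation among the input words rather than merely implied by it; but all of this parallels Proposition~\ref{prop-inflate-pwo} closely and should go through without surprises.
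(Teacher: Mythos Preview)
Your approach is essentially the paper's: reduce to showing $\langle\Geom(M)\rangle$ is pwo, take a $\preceq$-minimal infinite antichain, observe its proper closure is pwo, and derive a contradiction because the antichain then sits inside $\Geom(M)$ inflated by a pwo class, which is pwo via Higman.

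Two remarks. First, your justification that $B\subseteq\langle\Geom(M)\rangle$ has the direction of $\preceq$ backwards: $B\preceq A$ means every element of $A$ has something in $B$ below it, not that every element of $B$ lies below some element of $A$. The clean fix is to apply Proposition~\ref{prop-minimal-antichain} with $P=\langle\Geom(M)\rangle$ itself (it is well-founded), which is exactly what the paper does; then the minimal antichain lies in $\langle\Geom(M)\rangle$ by construction. Second, your final step re-proves Proposition~\ref{prop-inflate-pwo} inline, with the added restriction to simple skeletons and the language $L_S$; this restriction is unnecessary and is precisely the source of the bookkeeping worries you anticipate. The paper instead uses Proposition~\ref{prop-subst-completion-const} (which gives $\sigma\in\Geom(M)$ without requiring $\sigma$ simple) to obtain $A\subseteq\Geom(M)[A^{\suplessthan}]$, and then invokes Proposition~\ref{prop-inflate-pwo} directly. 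Your concern about whether the containment relation among outputs is ``witnessed'' by the generalised subword order on inputs is also misplaced: only order-preservation of the encoding map is needed, and that suffices to transfer pwo from domain to image.
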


\begin{proof}
From our prior discussion, it suffices to prove that $\langle\Geom(M)\rangle$ is pwo for every partial multiplication matrix $M$.  
Suppose $\langle\Geom(M)\rangle$ contains an infinite antichain; then it contains an infinite $\preceq$-minimal antichain $A$ by Proposition~\ref{prop-minimal-antichain}. 
By Proposition~\ref{prop-preceq-min-pcl-pwo} the permutation class $A^{\suplessthan}$ is pwo.
By Proposition~\ref{prop-subst-completion-const} every element $\pi\in A$ can be decomposed as 
$\pi=\sigma[\alpha_1,\dots,\alpha_m]$, where $\sigma\in\Geom(M)$ and $\alpha_1,\dots,\alpha_m$ are properly contained in $\pi$.
In other words, $A\subseteq \Geom(M)[A^{\suplessthan}]$, which cannot happen since $\Geom(M)[A^{\suplessthan}]$
is pwo by Proposition ~\ref{prop-inflate-pwo}.
\end{proof}

In their early investigations of simple permutations, Albert and Atkinson~\cite{albert:simple-permutat:} proved that every permutation class with only finitely many simple permutations is pwo.  Theorem~\ref{thm-geom-simples-pwo-basis} generalises this result, as every finite set of permutations is trivially contained in some geometric grid class.

\section{Properties and Frameworks}\label{sec-frameworks}

In order to establish our enumerative results we adapt ideas introduced by Brignall, Huczynska and Vatter~\cite{brignall:simple-permutat:}.  A {\it property\/} is any set $P$ of permutations, and we say that $\pi$ {\it satisfies\/} $P$ if $\pi\in P$.  Given a family $\P$ of properties and a permutation $\pi$, we write $\P(\pi)$ for the collection of properties in $\P$ satisfied by $\pi$.

In this section we use only two types of properties.  
An \emph{avoidance property} is one of the form $\Av(\beta)$ for some permutation $\beta$.  Note that if $\P$ is a family of avoidance properties and $\sigma\le\pi$, then $\sigma$ must avoid every permutation avoided by $\pi$, so $\P(\sigma)\supseteq\P(\pi)$. 
Additionally we will need the properties $\oplusprop$ and $\ominusprop$, which denote, respectively, the sets of sum decomposable permutations and skew decomposable permutations.

A \emph{$\P$-framework} $\fF$ is a (formal) expression $\sigma[\Q_1,\dots,\Q_m]$ where $\sigma$ is a permutation of length $m$, called the \emph{skeleton} of $\fF$,  and $\Q_i\subseteq\P$ for all $i$. 
We say that $\fF$ \emph{describes} the set of permutations
\[
\{\sigma[\alpha_1,\dots,\alpha_m]\st \mbox{$\P(\alpha_i)=\Q_i$ for all $i$}\}.
\]

Informed by Proposition~\ref{simple-decomp-unique}, we say that a $\P$-framework $\sigma[\Q_1,\dots,\Q_m]$ is \emph{simple} if $\sigma$ is simple and $D_\oplus\notin\Q_1$ (resp., $D_\ominus\notin\Q_1$) if $\sigma=12$ (resp., $\sigma=21$).  We then have the following result.

\begin{proposition}
\label{simple-framework-unique}
If $\P$ is a family of properties containing $\oplusprop$ and $\ominusprop$ then every non-trivial permutation is described by a unique simple $\P$-framework.
\end{proposition}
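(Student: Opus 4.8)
The plan is to derive this from Proposition~\ref{simple-decomp-unique} (the Albert--Atkinson uniqueness theorem for substitution decompositions), with the hypothesis $\oplusprop,\ominusprop\in\P$ doing precisely the work needed to resolve the otherwise-ambiguous case of inflations of $12$ and $21$.

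\emph{Existence.} Given a nontrivial permutation $\pi$, I would write $\pi=\sigma[\alpha_1,\dots,\alpha_m]$, where $\sigma$ is the unique simple permutation of which $\pi$ is an inflation. If $\sigma$ has length at least $4$, the components $\alpha_i$ are themselves unique; if $\sigma=12$ (resp.\ $\sigma=21$), choose the decomposition so that $\alpha_1$ is the unique sum indecomposable (resp.\ skew indecomposable) first component guaranteed by Proposition~\ref{simple-decomp-unique}. In every case set $\Q_i=\P(\alpha_i)$. Then $\sigma[\Q_1,\dots,\Q_m]$ describes $\pi$, witnessed by the $\alpha_i$ themselves, and it is a \emph{simple} $\P$-framework: $\sigma$ is simple, and when $\sigma=12$ (resp.\ $21$) the choice of $\alpha_1$ as sum (resp.\ skew) indecomposable, together with $\oplusprop\in\P$ (resp.\ $\ominusprop\in\P$), yields $\oplusprop\notin\Q_1$ (resp.\ $\ominusprop\notin\Q_1$).

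\emph{Uniqueness.} Suppose $\pi$ is described by two simple $\P$-frameworks $\sigma[\Q_1,\dots,\Q_m]$ and $\sigma'[\Q'_1,\dots,\Q'_{m'}]$, with witnessing decompositions $\pi=\sigma[\alpha_1,\dots,\alpha_m]=\sigma'[\alpha'_1,\dots,\alpha'_{m'}]$ satisfying $\P(\alpha_i)=\Q_i$ and $\P(\alpha'_i)=\Q'_i$. Since $\sigma$ and $\sigma'$ are both simple and $\pi$ is an inflation of a \emph{unique} simple permutation, $\sigma=\sigma'$ and $m=m'$. If this common skeleton has length at least $4$, Proposition~\ref{simple-decomp-unique} forces $\alpha_i=\alpha'_i$ for every $i$, so $\Q_i=\Q'_i$. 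If the skeleton is $12$, then framework simplicity gives $\oplusprop\notin\Q_1$ and $\oplusprop\notin\Q'_1$; because $\oplusprop\in\P$, this means $\alpha_1$ and $\alpha'_1$ are both sum indecomposable, so by the last clause of Proposition~\ref{simple-decomp-unique} we get $\alpha_1=\alpha'_1$, hence $\alpha_2=\alpha'_2$, hence $\Q_i=\Q'_i$. The skeleton $21$ is identical with skew in place of sum.

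\emph{Main obstacle.} The only subtle point is the $12$/$21$ case: a permutation typically has many decompositions $\pi=12[\beta_1,\beta_2]$, so neither existence nor uniqueness of a framework is immediate from unique simple inflation alone. The hypothesis $\oplusprop,\ominusprop\in\P$ is exactly what makes the condition ``$\alpha_1$ is sum indecomposable'' visible at the level of the property set $\Q_1$, which is simultaneously why the canonical framework qualifies as ``simple'' and why every simple framework is forced to use the Albert--Atkinson canonical first component. I would also be careful, in the length-$\geq 4$ case, to invoke uniqueness of the $\alpha_i$ \emph{relative to the already-identified skeleton} rather than merely uniqueness of the skeleton.
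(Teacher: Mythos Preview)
Your argument is correct and is precisely the intended derivation: the paper does not give a proof at all, stating the proposition immediately after the definition of a simple $\P$-framework with the remark that it is ``informed by Proposition~\ref{simple-decomp-unique}''. Your write-up spells out exactly this reduction, handling the $12$/$21$ cases via the hypothesis $\oplusprop,\ominusprop\in\P$ as required.
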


We say that the $\P$-framework $\sigma[\Q_1,\dots,\Q_m]$ is \emph{nonempty} if it describes at least one permutation; this condition is equivalent to requiring that there be at least one permutation $\alpha_i$ with $\P(\alpha_i)=\Q_i$ for every $i$.

The family $\P$ of properties is \emph{query-complete} if the collection of properties $\P(\sigma[\alpha_1,\dots,\alpha_m])$ is completely determined by $\sigma$ and the collections $\P(\alpha_1)$, $\dots$, $\P(\alpha_m)$.  In other words, $\P$ is query-complete if
\[
\P(\sigma[\alpha_1,\dots,\alpha_m])=\P(\sigma[\alpha_1',\dots,\alpha_m'])
\]
for all permutations
$\sigma$ of length $m$, and all
$m$-tuples $(\alpha_1,\dots,\alpha_m)$ and $(\alpha_1',\dots,\alpha_m')$ which satisfy $\P(\alpha_i)=\P(\alpha_i')$ for all $i$.  When $\P$ is query-complete, we may refer to the \emph{properties} of a nonempty $\P$-framework $\fF$, for which we use the notation $\P(\fF)$, defined as $\P(\pi)$ where $\pi$ is any permutation described by $\fF$.

The situation we are interested in is when $\C\subseteq\langle\Geom(M)\rangle$, i.e., when the simple permutations of $\C$ are contained in a geometric grid class (see Proposition~\ref{simples-in-substitution-completion}). Without loss of generality we will suppose that $M$ is a partial multiplication matrix (Proposition \ref{prop-geom-pmm}).
Let $B$ be the basis of $\C$; recall that $B$ is finite by Theorem \ref{thm-geom-simples-pwo-basis}.
In order to enumerate $\C$, the properties we are interested in are
\[
\P_B=\{\oplusprop,\ominusprop\}\cup\{\Av(\delta)\st \mbox{$\delta\le\beta$ for some $\beta\in B$}\}.
\]
Intuitively, these properties allow us to `monitor', as substitutions are iteratively formed to build $\C\subseteq\langle\Geom(M)\rangle$, `how much' of any basis element from $B$ the resulting permutations contain.  

Let us first verify that $\P_B$ is query-complete; as the union of query-complete sets of properties is again query-complete, we may prove this piece by piece.  First, $\{\oplusprop\}$ is query-complete: $\sigma[\alpha_1,\dots,\alpha_m]\in\oplusprop$ if and only if $\sigma\in\oplusprop$ or $\sigma=1$ and $\alpha_1\in\oplusprop$.  The case of $\{\ominusprop\}$ is similar.  For the rest of $\P_B$, we claim that for every $\beta\in B$, the set $\{\Av(\delta)\st\delta\le\beta\}$ is query-complete.  This is equivalent to stating that knowing the skeleton $\sigma$ and exactly which of the relevant subpermutations of $\beta$ each interval contains allows us to determine whether $\sigma[\alpha_1,\dots,\alpha_m]$ contains a given $\delta\le\beta$; a formal proof is given in Brignall, Huczynska and Vatter~\cite{brignall:simple-permutat:}.

Now let $\P\supseteq\P_B$ be a query-complete set of properties consisting of $\P_B$ together, possibly, with finitely many additional avoidance properties. 
Since $B$ is the relative basis of $\C$ and the properties $\Av(\beta)$ ($\beta\in B$) are in $\P$, it follows that every subset of $\P$ `knows' whether
the permutations it describes belong to $\C$ or not.
More precisely, for a $\P$-framework $\fF$, either every permutation  described by $\fF$ lies in $\C$ or none of them do.
 
The first step of our enumeration of $\C$ is to encode the nonempty, simple $\P$-frameworks which describe permutations in $\C$. 
Let $\Sigma$ be the cell alphabet of $M$, and let $\bij:\Sigma^\ast\rightarrow \Geom(M)$ be the  mapping defined in
Section \ref{sec-geomgrid}.
Since the set $S$ of simple permutations in $\C$ is contained in $\Geom(M)$, Theorem \ref{thm-geom-griddable-all} (iv)
applied to the subclass $\C \cap \Geom(M)$ of $\Geom(M)$ yields a regular language $L_S\subseteq \Sigma^\ast$ such that
$\bij$ induces a bijection between $L_S$ and $S$.
In order to encode $\P$-frameworks, we extend our alphabet to $\Sigma\times 2^{\P}$, that is, ordered pairs whose first component is a letter from $\Sigma$, and whose second component is a subset of $\P$.  
We now define the mapping $\bij^{\P}$ from words in $\left(\Sigma\times2^{\P}\right)^\ast$ to ${\P}$-frameworks with underlying permutations in $\Geom(M)$ by
\[
\bij^{\P}:(w_1,\Q_1)\cdots (w_m,\Q_m)\mapsto \bij(w)[\Q_{\psi(1)},\dots,\Q_{\psi(m)}],
\]
where $w=w_1\cdots w_m$,  and $\psi$ is the index correspondence associated to $(\bij,w)$ defined at the end of Section~\ref{sec-geomgrid}.
Since $\bij$ is onto (Proposition \ref{thm-geom-griddable-all}), so is $\bij^{\P}$.

Given two $\P$-frameworks we write
\[
\tau[\R_1,\dots,R_k]\le\sigma[\Q_1,\dots,Q_m]
\]
if there are indices $1\le i_1<\cdots<i_k\le m$ such that $\tau$ is order isomorphic to $\sigma(i_1)\cdots\sigma(i_k)$ and $\R_j=\Q_{i_j}$ for all $1\le j\le k$.  From Proposition~\ref{prop-properties-of-bij} it follows readily that $\bij^\P$ is order-preserving when considered as a mapping from $\left(\Sigma\times2^{\P}\right)^\ast$ under the subword order to the set of all $\P$-frameworks under the above ordering.

The main result of this section shows that the $\P$-frameworks we are interested in are described by a finite family of
regular languages.

\begin{theorem}
\label{thm-framework-regular}
Let $M$ be a partial multiplication matrix with cell alphabet $\Sigma$, 
let $B$ be any finite set of permutations, 
and let $\P$ be a query-complete set of properties consisting of $\P_B$ together, possibly, with finitely many additional avoidance properties.  For every subset $\Q\subseteq\P$ of properties, there is a regular language $L_\Q\subseteq\left(\Sigma\times2^{\P}\right)^\ast$ such that the mapping $\bij^{\P}$ is a bijection between $L_\Q$ and the nonempty, simple $\P$-frameworks $\fF=\sigma[\Q_1,\dots,\Q_m]$
satisfying $\sigma\in\Geom(M)$ and $\P(\fF)=\Q$.
\end{theorem}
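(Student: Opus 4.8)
The plan is to realize $L_\Q$ as a finite intersection of regular languages over the finite alphabet $\Sigma\times 2^\P$ (finite since $\P$ is) and then check that $\bij^\P$ restricts to the claimed bijection. I would begin from Theorem~\ref{thm-geom-griddable-all}(iv) applied to $\Geom(M)$ itself, obtaining a regular language $L_S\subseteq\Sigma^\ast$ on which $\bij$ is a bijection onto the simple permutations of $\Geom(M)$. Pulling $L_S$ back along the homomorphism $\rho\colon(\Sigma\times 2^\P)^\ast\to\Sigma^\ast$ that deletes second components keeps it regular, since regular languages are closed under inverse homomorphic images; and $u\in\rho^{-1}(L_S)$ is precisely the requirement that the skeleton of $\bij^\P(u)$ be a simple permutation lying in $\Geom(M)$. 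I would also introduce the regular, subword-closed language $N=(\Sigma')^\ast$, where $\Sigma'$ consists of the letters $(a,R)$ for which $R=\P(\alpha)$ for some permutation $\alpha$; since the list of property sets of $\bij^\P(u)$ is exactly the list of second components of $u$, the words in $N$ are precisely those for which $\bij^\P(u)$ is nonempty. Injectivity of $\bij^\P$ on $\rho^{-1}(L_S)$ then comes for free: the skeleton of $\bij^\P(u)$ recovers $\rho(u)$ by injectivity of $\bij$ on $L_S$, and the framework then recovers the remaining coordinates via the index correspondence $\psi$.

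The crux, and the step I expect to cause the most trouble, is a regularity statement about $\delta$-avoidance at the level of frameworks. For a permutation $\delta$ I would set
\[
Z_\delta=\{u\in N\st\mbox{every permutation described by }\bij^\P(u)\mbox{ avoids }\delta\}
\]
and prove that $Z_\delta$ is regular by showing it is closed downward in the subword order and invoking the fact that every subword-closed language is regular. The argument I have in mind: if $u''\le u$ with $u\in Z_\delta$, then $u''\in N$ (as $N$ is subword closed) and $\bij^\P(u'')\le\bij^\P(u)$ (as $\bij^\P$ is order preserving); writing $\bij^\P(u'')=\tau[\R_1,\dots,\R_k]$ and $\bij^\P(u)=\sigma[\Q_1,\dots,\Q_m]$, the framework order supplies indices $i_1<\dots<i_k$ with $\tau$ order isomorphic to $\sigma(i_1)\cdots\sigma(i_k)$ and $\R_j=\Q_{i_j}$. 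Any permutation $\pi''=\tau[\beta_1,\dots,\beta_k]$ described by $\bij^\P(u'')$ then embeds into a permutation described by $\bij^\P(u)$: choose for each $i\notin\{i_1,\dots,i_k\}$ a permutation $\alpha_i$ with $\P(\alpha_i)=\Q_i$ (possible because $\bij^\P(u)$ is nonempty), set $\alpha_{i_j}=\beta_j$, and note that the entries of $\pi=\sigma[\alpha_1,\dots,\alpha_m]$ lying in the intervals $i_1,\dots,i_k$ form a copy of $\pi''$. Since $\pi$ avoids $\delta$ so does $\pi''$, whence $u''\in Z_\delta$. Getting this framework-ordering-and-nonemptiness bookkeeping exactly right is the one genuinely delicate point; the rest is routine.

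To assemble $L_\Q$ I would translate the requirement $\P(\fF)=\Q$ property by property, using that query-completeness of $\P$ makes $\P(\fF)$ well defined for nonempty $\fF$. Write $\P=\{\oplusprop,\ominusprop\}\cup\{\Av(\delta_1),\dots,\Av(\delta_r)\}$. For $u\in\rho^{-1}(L_S)\cap N$ with simple skeleton $\sigma$: a sum-decomposable permutation of length at least three has a proper interval, so a simple permutation is sum decomposable exactly when it equals $12$; combined with query-completeness of $\{\oplusprop\}$ this gives $\oplusprop\in\P(\bij^\P(u))$ if and only if $\sigma=12$ — a condition on $\rho(u)$ that singles out only finitely many words — and symmetrically $\ominusprop\in\P(\bij^\P(u))$ if and only if $\sigma=21$; while $\Av(\delta_t)\in\P(\bij^\P(u))$ if and only if $u\in Z_{\delta_t}$. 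The clause distinguishing a \emph{simple} $\P$-framework from one with merely a simple skeleton constrains a single letter of $u$ and only when $|u|=2$, so it removes only a finite set; let $F$ be the resulting cofinite (hence regular) language. I would then take $L_\Q$ to be the intersection of $\rho^{-1}(L_S)$, $N$, $F$, and, for each member of $\P$, either the language associated to it above or that language's complement, according to whether the member lies in $\Q$; every factor being regular, $L_\Q$ is regular.

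Finally I would verify the bijection. For $u\in L_\Q$, the defining factors say exactly that $\bij^\P(u)$ is a nonempty simple $\P$-framework with skeleton in $\Geom(M)$ and $\P(\bij^\P(u))=\Q$, so the image lies in the target set. Conversely, given such a framework $\fF=\sigma[\Q_1,\dots,\Q_m]$, I would take the unique $w\in L_S$ with $\bij(w)=\sigma$ and the index correspondence $\psi$ of $(\bij,w)$, and form the word with $i$th letter $(w_i,\Q_{\psi^{-1}(i)})$; this word lies in $\rho^{-1}(L_S)$, satisfies $\bij^\P$ of it equals $\fF$, and hence lies in each remaining factor, since nonemptiness, simplicity of the framework, and having $\P$-profile $\Q$ are intrinsic to $\fF$ — so it belongs to $L_\Q$. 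Together with the injectivity of $\bij^\P$ on $\rho^{-1}(L_S)\supseteq L_\Q$ already noted, this yields the desired bijection.
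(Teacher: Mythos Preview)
Your proof is correct and takes essentially the same approach as the paper: pull back the regular language $L_S$ for simple skeletons along the projection homomorphism, and use that avoidance conditions on frameworks yield subword-closed (hence regular) preimages because $\bij^\P$ is order-preserving. The only difference is organizational---the paper case-splits on whether $\oplusprop$ or $\ominusprop$ lies in $\Q$ (handling those cases directly by finiteness) whereas you treat all properties uniformly and are more explicit about nonemptiness via $N$ and the simple-framework clause via $F$.
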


\begin{proof}
Let $L_S\subseteq\Sigma^\ast$ be the regular language such that the mapping $\bij$ is a bijection between $L_S$ and the simple permutations of $\Geom(M)$.  The language
\[
L_S^{\P}
=
\{(w_1,\Q_1)\cdots(w_m,\Q_m)\st w_1\cdots w_m\in L_S\}
\subseteq
\left(\Sigma\times 2^{\P}\right)^\ast
\]
is the inverse image of $L_S$ under the first projection homomorphism,
and is thus regular.

Consider first the case where $\oplusprop\in\Q$.  Thus we must generate all nonempty, simple $\P$-frameworks which describe sum decomposable permutations $\pi$ with $\P(\pi)=\Q$.  
Clearly there are only finitely many such frameworks because they have the form
$12[\Q_1,\Q_2]$.
Choosing a single preimage under $\bij^{\P}$ for each framework yields a finite, and hence regular, set $L_\Q$ with the desired properties.
The case where $\ominusprop\in\Q$ is dual.

Now suppose that $\oplusprop,\ominusprop\notin \Q$.  In this case we must ensure that the $\P$-frameworks we build are neither sum decomposable nor skew decomposable.  This is equivalent to insisting that the skeleton have length at least four.  
Consider the set
$\{\fF \st \Q\subseteq\P(\fF)\}$
of $\P$-frameworks which satisfy at least the properties of $\Q$.
(Note that here we do not require the skeleton be simple -- it can be any element of $\Geom(M)$.)
As all of the properties of $\Q$ are avoidance properties, this set of $\P$-frameworks is closed downward under the $\P$-framework ordering.  
Therefore, because $\bij^\P$ is order-preserving, the set
\[
\{w\in\left(\Sigma\times2^{\P}\right)^\ast \st \Q\subseteq\P(\bij^{\P}(w))\}
\]
is subword-closed, and thus regular.  Dually, the set
\[
\{w\in\left(\Sigma\times2^{\P}\right)^\ast \st \P(\bij^{\P}(w))\subseteq\Q\}
\]
is upward-closed, and thus also regular.  The regular language $L_\Q$ we need to produce is simply the intersection of 
the two sets above (to ensure that $\P(\fF)=\Q$ for every resulting framework $\fF$), 
with the regular language $L_S^{\P}$ (to ensure that the skeleton of $\fF$ is simple),
and the regular language of words of length at least four (to ensure that $\oplusprop,\ominusprop\not\in\P(\fF)$), 
completing the proof.
\end{proof}

\section{Algebraic Generating Functions}
\label{sec-alg-gf}

Our goal now is to utilise Theorem~\ref{thm-framework-regular} ($\P$-frameworks specified by any $\Q\subseteq \P$ are in bijection with a regular language) 
to show that the generating function for a subclass $\C$ of the substitution closure of a geometrically griddable class is algebraic.
It obviously suffices to consider the case where $\C\subseteq\langle\Geom(M)\rangle$.  
Without loss of generality suppose that $M$ is a partial multiplication matrix (Proposition \ref{thm-geom-griddable-all}),
 and denote the corresponding cell alphabet by $\Sigma$.
Let $B$ be the (finite) basis of $\C$.  

For the purposes of this section, it will be helpful to insist that
\[
\P=\P_B\cup\{\Av(21),\Av(12)\},
\]
which is easily seen to be query complete.
With these two extra properties, the family $\Q^\bullet$ consisting of all avoidance properties in $\P$ except $\Av(1)$ satisfies
\[
\P(\pi)=\Q^\bullet
\mbox{ if and only if }
\pi=1.
\]

For every subset $\Q\subseteq \P$ let $f_\Q$ be the generating function for the set
\[
\Delta(\Q)=\{ \pi\in\langle\Geom(M)\rangle \st \P(\pi)=\Q\}
\]
of all permutations in $\langle\Geom(M)\rangle$ described by $\Q$. 
Clearly
\[
\Delta(\Q^\bullet)=\{1\}.
\]
For every other $\Q$ we have
\[
\Delta(\Q)=\bigcup \sigma[\Delta(\Q_1),\dots,\Delta(\Q_m)],
\]
where the (disjoint) union is taken over all simple $\P$-frameworks $\fF=\sigma[\Q_1,\dots,\Q_m]$ with $\sigma\in\Geom(M)$ and $\P(\fF)=\Q$.

This latter set of frameworks is bijectively encoded by the language $L_\Q\subseteq (\Sigma \times 2^\P)^\ast$ via the mapping $\bij^\P$,
as described in Section \ref{sec-frameworks}.
Let $g_\Q$ be the generating function for $L_\Q$ in non-commuting variables representing the letters of our alphabet: 
\begin{equation}
\label{nreq3}
g_\Q=\sum_{w\in L_\Q} w.
\end{equation}
Due to the recursive description of the sets $\Delta(\Q)$ above, 
and the fact that every non-trivial permutation in $\langle\Geom(M)\rangle$ is described by a unique $\P$-framework,
a system of equations for the $f_\Q$ ($\Q\subseteq\P$)
can be obtained by stipulating
\begin{equation}
\label{nreq4}
f_{\Q^\bullet}=x,
\end{equation}
and performing the following substitutions in \eqref{nreq3}:
\begin{equation}
\label{nreq5}
g_\Q\leftarrow f_\Q,\ (u,\R)\leftarrow f_\R\ (\R\subseteq\P).
\end{equation}
The resulting system is finite, although a typical right-hand side of an equation is an infinite series.

On the other hand, the language $L_\Q$ is regular by Theorem~\ref{thm-framework-regular}.
Therefore, as is well known (see Flajolet and Sedgewick~\cite[Proposition I.3]{flajolet:analytic-combin:}),
each $g_\Q$ is the solution of a finite system of linear equations (which almost certainly includes  auxiliary variables).
We then take these systems together and perform substitutions \eqref{nreq5} on them.
The resulting system, together with the equation \eqref{nreq4}, is a finite algebraic system for the $f_\Q$.
We may then perform algebraic elimination (see Flajolet and Sedgewick~\cite[Appendix B.1]{flajolet:analytic-combin:}) to produce a single polynomial equation for each $f_\Q$. 
The generating function $f$ of $\C$ is
$f=\sum f_\Q$,
where the sum is taken over all $\Q$ satisfying $\{ \Av(\beta)\st \beta\in B\}\subseteq \Q\subseteq\P$,
thus proving the following result.

\begin{theorem}
\label{thm-context-free}
Every subclass of the substitution closure of a geometrically griddable class has an algebraic generating function.
\end{theorem}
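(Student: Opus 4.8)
The plan is to reduce to the case $\C\subseteq\langle\Geom(M)\rangle$ for a partial multiplication matrix $M$ — by definition a geometrically griddable class lies inside some $\Geom(M)$, which by Proposition~\ref{prop-geom-pmm} may be taken to have a partial multiplication matrix, and every subclass of $\langle\C\rangle$ is then a subclass of $\langle\Geom(M)\rangle$ — and then to realise the generating function of $\C$ as a component of the solution of a finite algebraic system.

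First I would fix the basis $B$ of $\C$, which is finite by Theorem~\ref{thm-geom-simples-pwo-basis}, together with the query-complete family $\P=\P_B\cup\{\Av(12),\Av(21)\}$, and partition $\langle\Geom(M)\rangle$ into the sets $\Delta(\Q)=\{\pi\in\langle\Geom(M)\rangle:\P(\pi)=\Q\}$ indexed by subsets $\Q\subseteq\P$. Writing $f_\Q$ for the ordinary generating function of $\Delta(\Q)$, the class $\C$ is the union of those $\Delta(\Q)$ with $\{\Av(\beta):\beta\in B\}\subseteq\Q$, so $f=\sum f_\Q$ over that range, and it suffices to prove each $f_\Q$ algebraic.

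Next I would extract the recursion. By Proposition~\ref{simple-framework-unique} every non-trivial permutation is described by a unique simple $\P$-framework, and by query-completeness the property set of a framework is well-defined; hence for $\Q\neq\Q^\bullet$ the set $\Delta(\Q)$ is the \emph{disjoint} union of the sets $\sigma[\Delta(\Q_1),\dots,\Delta(\Q_m)]$ over all nonempty simple $\P$-frameworks $\fF=\sigma[\Q_1,\dots,\Q_m]$ with $\sigma\in\Geom(M)$ and $\P(\fF)=\Q$, while $\Delta(\Q^\bullet)=\{1\}$. Since lengths add under inflation, this yields the formal identity $f_\Q=\sum_\fF\prod_i f_{\Q_i}$. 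The crucial input is Theorem~\ref{thm-framework-regular}: the frameworks contributing to a fixed $\Q$ are bijectively encoded, via $\bij^\P$, by a regular language $L_\Q\subseteq(\Sigma\times2^\P)^\ast$. Taking $g_\Q=\sum_{w\in L_\Q}w$ in the non-commuting letters of $\Sigma\times2^\P$ and performing the substitutions $g_\Q\leftarrow f_\Q$ and $(u,\R)\leftarrow f_\R$ converts the infinite framework sums into a finite system consisting of $f_{\Q^\bullet}=x$ together with one equation for each remaining $\Q$. Because $L_\Q$ is regular, $g_\Q$ is itself the designated component of the solution of a finite \emph{linear} system with auxiliary variables; performing the same substitutions on those linear systems produces a finite \emph{algebraic} system whose solution has the vector $(f_\Q)$ among its components. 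Algebraic elimination then gives a polynomial equation satisfied by each $f_\Q$, hence one for $f=\sum f_\Q$. (Equivalently, the frameworks may be viewed as the productions of an unambiguous context-free grammar whose nonterminals are the $\Delta(\Q)$, which accounts for the name of the theorem.)

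The main obstacle is bookkeeping rather than a new idea. One must verify that distinct simple $\P$-frameworks describe disjoint permutation sets and that every non-trivial permutation of $\langle\Geom(M)\rangle$ arises from exactly one of them, so that the power-series identities hold with no double counting; that the passage from non-commuting to commuting variables is legitimate, which it is because every inflating factor has positive length, so $f_\Q$ has zero constant term for $\Q\neq\Q^\bullet$ while $f_{\Q^\bullet}=x$, making the system proper and hence uniquely solvable in power series; and that algebraic elimination applies, which it does once the rational series $g_\Q$ have been presented by finite linear systems with auxiliary variables. The genuinely hard work — showing that these framework collections form regular languages — has already been carried out in Theorem~\ref{thm-framework-regular}, so what remains is to assemble the pieces carefully.
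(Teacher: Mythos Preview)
Your proposal is correct and follows essentially the same route as the paper: reduce to $\C\subseteq\langle\Geom(M)\rangle$, fix the finite basis $B$ and the query-complete family $\P=\P_B\cup\{\Av(12),\Av(21)\}$, partition $\langle\Geom(M)\rangle$ into the sets $\Delta(\Q)$, invoke Theorem~\ref{thm-framework-regular} to encode the relevant simple $\P$-frameworks by regular languages $L_\Q$, substitute the $f_\R$ into the (rational) non-commuting series $g_\Q$, and eliminate to obtain algebraicity. Your additional remarks on disjointness and the properness of the resulting system are sound and make explicit points the paper leaves implicit.
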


While we have established Theorem~\ref{thm-context-free} in a purely algebraic manner, it would not be difficult to express our proof in terms of formal languages.
In such an  approach, the above considerations would translate into a proof that the class $\C$
is in bijection with a context-free language, and exhibiting an unambiguous grammar for this language.
Theorem \ref{thm-context-free} would follow from the fact that such languages have algebraic generating functions;
see
Flajolet and Sedgewick~\cite[Proposition I.7]{flajolet:analytic-combin:}.


\section{Inflations by Strongly Rational Classes}

We now consider inflations of the form $\C[\U]$ where $\C$ is geometrically griddable and $\U$ is \emph{strongly rational}, meaning that $\U$ and all its subclasses have rational generating functions.  Recall that $\C[\U]$ is defined as
\[
\C[\U]=\{\sigma[\alpha_1,\dots,\alpha_m]\st\mbox{$\sigma\in\C$ is of length $m$, and $\alpha_1,\dots,\alpha_m\in\U$}\}.
\]
We cannot hope to prove the main result of this section by encoding the permutations of $\C[\U]$ as a regular language, 
simply because we do not know how to encode an arbitrary strongly rational class.  Thus we must consider generating functions for various subsets of $\U$.  The following result is our starting point.

\begin{proposition}[Albert, Atkinson, and Vatter~\cite{albert:subclasses-of-t:}]
\label{prop-strong-rat-indecomps}
If the class $\U$ is strongly rational, then each of the following sets has a rational generating function:
\begin{itemize}
\item the sum indecomposable permutations in $\U$;
\item the sum decomposable permutations in $\U$;
\item the skew indecomposable permutations in $\U$;
\item the skew decomposable permutations in $\U$; and
\item the permutations in $\U$ which are both sum and skew indecomposable.
\end{itemize}
\end{proposition}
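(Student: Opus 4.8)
The plan is to reduce every item on the list to a small number of generating-function manipulations that preserve rationality, using two facts: (a) $\U$ itself is strongly rational, so every subclass of $\U$ has a rational generating function, and (b) rational power series are closed under the usual operations (sum, product, and the ``sequence'' operation $F\mapsto 1/(1-F)$ provided $F$ has zero constant term). Write $u(x)$ for the generating function of $\U$, so $u(x)=1+u^{+}(x)$ where $u^{+}$ counts the nonempty permutations of $\U$; $u$ is rational, hence so is $u^{+}$. Denote by $s(x)$ the generating function of the sum indecomposable permutations of $\U$. The key structural identity is the unique decomposition of a permutation into its sum components (Proposition~\ref{simple-decomp-unique}, or rather the classical fact recalled just before it): every nonempty permutation is uniquely an ordered sequence of sum indecomposable permutations. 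This does \emph{not} immediately say $u^{+}=s/(1-s)$, because membership of $\alpha_1\oplus\cdots\oplus\alpha_k$ in $\U$ is not determined componentwise in general. So the first step is to handle that subtlety.

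The trick is to stratify by the \emph{first} sum component. For a nonempty $\pi\in\U$ with first sum component $\alpha_1$, the sum-suffix $\alpha_2\oplus\cdots\oplus\alpha_k$ is a (possibly empty) permutation lying in the subclass $\U$, but more precisely it lies in $\Av(\text{basis of }\U)$ restricted appropriately; the cleanest route is to observe that for a \emph{fixed} sum indecomposable permutation $\alpha$, the set $\{\alpha\oplus\rho\in\U : \rho\in\U\}$ corresponds to $\rho$ ranging over the subclass $\U_\alpha=\{\rho\in\U : \alpha\oplus\rho\in\U\}$, which is itself a permutation class (it is closed downward, since if $\rho'\le\rho$ then $\alpha\oplus\rho'\le\alpha\oplus\rho\in\U$ so $\alpha\oplus\rho'\in\U$), hence has a rational generating function $u_\alpha(x)$ by strong rationality. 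But there are infinitely many $\alpha$, so I would instead argue: the map sending a nonempty sum decomposable $\pi\in\U$ to (first sum component, rest) is a bijection onto pairs, and iterate. Concretely, let $D_\oplus(x)$ be the generating function of the sum decomposable permutations of $\U$ and $I_\oplus(x)=s(x)$ that of the sum indecomposables of $\U$; then $u^{+}=I_\oplus+D_\oplus$ trivially, so it suffices to get one of $I_\oplus,D_\oplus$ rational.

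Here is the step I expect to be the real content. Consider the class $\U$ and the subclass $\U^{\ominus}\subseteq\U$ of skew indecomposable permutations of $\U$ — wait, that is circular. Let me instead use the following: a permutation is sum decomposable iff it is an inflation $12[\gamma,\delta]$; and crucially, in such a minimal decomposition $\gamma$ can be taken sum indecomposable and then $\gamma,\delta$ are \emph{both} determined (Proposition~\ref{simple-decomp-unique}). Now partition the sum decomposable elements of $\U$ by whether $\delta$ is sum indecomposable or not; unwinding, every sum decomposable $\pi\in\U$ is uniquely $\alpha_1\oplus\rho$ with $\alpha_1$ sum indecomposable and $\rho$ nonempty. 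The set $T=\{(\alpha_1,\rho) : \alpha_1\oplus\rho\in\U,\ \alpha_1 \text{ sum indec.},\ \rho\ne\varepsilon\}$ has generating function $D_\oplus(x)$. The projection of $T$ onto its second coordinate is the class $\U'=\{\rho\in\U : \exists\,\alpha_1\ne\varepsilon,\ \alpha_1\oplus\rho\in\U\}$; since $1\oplus\rho\in\U$ whenever $\rho\in\U$ (as $\rho\le 1\oplus\rho$ forces nothing, but actually $1\oplus\rho$ need not lie in $\U$) — this is exactly where care is needed. The honest approach, which I would adopt, is the one Albert--Atkinson--Vatter use: show $\U$ has a rational generating function in the variable pair $(x,y)$ tracking length and ``number of sum components'' would suffice, but more simply: since $\U$ is strongly rational, the subclass $\U\cap\Av(12)$ and its complement behave well, and one uses that the generating function of sum indecomposables equals $u^{+}(x) - $ (generating function of proper sums), combined with the observation that a sum of $\ge 2$ indecomposables in $\U$ is, by the downward-closure argument above applied to ``drop the first component'', in bijection with $\{(\alpha_1,\rho)\}$ where both coordinates and their sum lie in $\U$. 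The generating function of the latter is $\sum_{\alpha_1 \text{ s.i. in }\U} x^{|\alpha_1|} u_{\alpha_1}(x)$, and the point is that $u_{\alpha_1}$ takes only finitely many distinct values as $\alpha_1$ ranges over a given length because there are finitely many subclasses of $\U$ that arise — no, that is false in general too.

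So, being candid about the plan: I would follow the cited paper and prove it by showing that strong rationality is equivalent to a ``finite labelled-generating-function'' condition. Precisely, the main obstacle — and the crux of the whole proof — is establishing that for a strongly rational $\U$, the two-variable generating function $\sum_{\pi\in\U} x^{|\pi|} y^{c(\pi)}$, where $c(\pi)$ is the number of sum components, is rational (equivalently, a finite-state description exists that tracks sum components). Granting this, specialising and taking appropriate quotients of power series immediately yields: the sum indecomposables are the $y^{1}$-coefficient-class, so their generating function is rational; the sum decomposables are the rest, hence rational; the skew cases are dual (apply everything to the reverse-complement, or use that $\U$ closed downward implies its image under complementation is too, with the same rationality); and the permutations that are both sum and skew indecomposable are obtained by an inclusion-exclusion: (both indecomposable) $= u^{+} - (\text{sum decomp.}) - (\text{skew decomp.})$, since no permutation is both sum and skew decomposable, and this is a finite $\mathbb{Z}$-linear combination of rational series, hence rational. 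Thus the last four bullets reduce to the first plus duality plus inclusion-exclusion, and the first reduces to the two-variable rationality statement, which is the genuine work and which I would cite from Albert--Atkinson--Vatter~\cite{albert:subclasses-of-t:} or reprove via a transfer-matrix / finite-automaton argument on the sum-component structure.
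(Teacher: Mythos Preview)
This proposition is quoted from Albert, Atkinson, and Vatter~\cite{albert:subclasses-of-t:} without proof in the present paper, so there is no in-paper argument to compare against directly.

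Evaluating your proposal on its own: the reductions are correct. The second and fourth bullets follow from the first and third by complementation within $\U\setminus\{\varepsilon\}$; the third follows from the first by symmetry (reversal is an automorphism of the containment order, so the reverse of a strongly rational class is strongly rational, and its skew indecomposables correspond to the sum indecomposables of $\U$); and the fifth follows by your inclusion--exclusion, since no permutation is simultaneously sum and skew decomposable. All of that is fine.

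The gap is the first bullet itself. You correctly identify why the naive identity $u^{+}=s/(1-s)$ fails ($\U$ need not be sum-closed) and why stratifying by the first sum component runs into trouble (infinitely many quotient classes $\U_\alpha$, no obvious finiteness). But your proposed fix---assert that the bivariate generating function $\sum_{\pi\in\U}x^{|\pi|}y^{c(\pi)}$ is rational and read off the $y^{1}$-coefficient---is not an argument: you neither prove the bivariate claim nor explain why it should be more tractable. In fact it is \emph{strictly stronger} than the statement to be proved (it encodes how all sum components interact, not just the first), so you have traded the problem for a harder one. Your fallback, to ``cite from Albert--Atkinson--Vatter,'' is circular, since that is precisely the source of the proposition; and the suggested ``transfer-matrix / finite-automaton'' route is unavailable in this generality, because a strongly rational class need not be finitely based and carries no regular-language encoding on which to run an automaton. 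You have accurately located the crux, but the substantive step remains entirely unaddressed.
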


As in Section~\ref{sec-frameworks}, given a finite set $B$ of permutations, we define the family of properties $\P_B$ by
\[
\P_B=\{\oplusprop,\ominusprop\}\cup\{\Av(\delta)\st \mbox{$\delta\le\beta$ for some $\beta\in B$}\}.
\]

\begin{proposition}
\label{prop-strong-rat-properties}
Let $\U$ be a strongly rational permutation class, and let $B$ be a finite set of permutations.  For every subset $\Q\subseteq\P_B$ of properties, the generating function for the permutations in $\U$ satisfying $\P_B(\pi)=\Q$ is rational.
\end{proposition}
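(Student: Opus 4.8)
The plan is to prove Proposition~\ref{prop-strong-rat-properties} by induction on the number of avoidance properties in $\Q$ that lie below some fixed basis element, reducing at each stage to applications of Proposition~\ref{prop-strong-rat-indecomps} together with closure properties of rational generating functions. The key observation is that the set $\Delta_\U(\Q)=\{\pi\in\U\st\P_B(\pi)=\Q\}$ can be obtained from larger, more easily understood sets by Boolean operations. Specifically, for each subset $\R\subseteq\P_B$ consisting only of avoidance properties, consider the set $\{\pi\in\U\st\R\subseteq\P_B(\pi)\}$, which is exactly $\U\cap\bigcap_{\Av(\delta)\in\R}\Av(\delta)$, i.e. the intersection of $\U$ with a finitely based class; since $\U$ is strongly rational, this intersection is a subclass of $\U$ and therefore has a rational generating function. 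By inclusion--exclusion over the avoidance properties, the generating function for the set of permutations in $\U$ that satisfy \emph{exactly} the avoidance properties in some prescribed collection is then a $\mathbb{Z}$-linear combination of these, hence rational.

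First I would handle the purely-avoidance part of $\Q$, ignoring $\oplusprop$ and $\ominusprop$: writing $\Q_{\mathrm{av}}=\Q\cap\{\Av(\delta)\st\delta\le\beta\text{ for some }\beta\in B\}$, the set of $\pi\in\U$ with $\P_B(\pi)\cap\{\Av(\delta)\}=\Q_{\mathrm{av}}$ (with no constraint on $\oplusprop,\ominusprop$) has a rational generating function by the inclusion--exclusion argument above. Then I would intersect with the decomposability constraints. There are four cases according to whether $\oplusprop\in\Q$ and whether $\ominusprop\in\Q$. For each case, the additional constraint is that $\pi$ be sum (in)decomposable and skew (in)decomposable; Proposition~\ref{prop-strong-rat-indecomps} tells us that within $\U$ each of these four combined conditions cuts out a set with a rational generating function, but I need it \emph{relative to} the avoidance constraint, not just within $\U$. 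The cleanest route is to observe that if $\U'$ is any subclass of $\U$ then $\U'$ is itself strongly rational, so Proposition~\ref{prop-strong-rat-indecomps} applies to $\U'$; taking $\U'=\U\cap\bigcap_{\Av(\delta)\in\R}\Av(\delta)$ for each relevant $\R$ and again using inclusion--exclusion lets me impose the avoidance constraint exactly while simultaneously controlling sum/skew decomposability.

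Concretely, the steps in order are: (1) for each collection $\R$ of avoidance properties from $\P_B$, note $\U_\R:=\U\cap\bigcap_{\Av(\delta)\in\R}\Av(\delta)$ is a subclass of $\U$, hence strongly rational, hence has a rational generating function, as do its sum-indecomposable part, sum-decomposable part, skew-indecomposable part, skew-decomposable part, and its both-indecomposable part, all by Proposition~\ref{prop-strong-rat-indecomps}; (2) for the target $\Q$, determine which decomposability sub-collection of $\U_\R$ is wanted (e.g. if $\oplusprop\notin\Q$ and $\ominusprop\notin\Q$ we want the both-indecomposable part, and the three analogous cases otherwise—the sum-decomposable-but-skew-indecomposable case being obtained as sum-decomposable minus the part that is also skew-decomposable, which is fine since sum-decomposable and skew-decomposable are almost disjoint and any overlap is handled by a further rational correction term, or more simply by noting a permutation of length $\ge2$ cannot be both sum- and skew-decomposable); (3) apply inclusion--exclusion over the avoidance properties to pass from "satisfies at least $\R$" to "satisfies exactly $\Q_{\mathrm{av}}$", obtaining a finite $\mathbb{Z}$-linear combination of rational generating functions, which is rational.

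The main obstacle is step (2): disentangling the interaction between the avoidance constraints and the decomposability constraints, and in particular realizing the four decomposability cases (sum-dec./skew-dec., sum-dec./skew-indec., sum-indec./skew-dec., sum-indec./skew-indec.) from the five sets supplied by Proposition~\ref{prop-strong-rat-indecomps}. This is where one must be careful that "sum decomposable" as a property and the relative basis constraint genuinely commute in the way inclusion--exclusion requires, and that no permutation of length at least two is simultaneously sum- and skew-decomposable, so that the four cases partition $\U_\R$ minus its singletons cleanly. Everything else is routine use of the fact that finite sums, differences, and the subclass-stability of strongly rational classes preserve rationality.
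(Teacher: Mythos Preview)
Your proposal is correct and follows essentially the same approach as the paper: both arguments apply Proposition~\ref{prop-strong-rat-indecomps} to the subclasses $\U\cap\Av(B')$ and then use inclusion--exclusion to pass from ``at least $\R$'' to ``exactly $\Q$''. The paper streamlines your step~(2) by folding the decomposability properties $\oplusprop,\ominusprop$ into the inclusion--exclusion itself (summing over all $\R$ with $\Q\subseteq\R\subseteq\P_B$ and noting $f_\R=0$ whenever $\oplusprop,\ominusprop\in\R$), which sidesteps the four-case analysis you describe, but the content is the same.
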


\begin{proof}
Let $g_\Q$ denote the generating function for the permutations we want to count, i.e., the permutations in $\U$ which satisfy precisely the properties $\Q$.  Further, given a set $\R\subseteq\P_B$ of properties, let $f_\R$ denote the generating function for the permutations in $\U$ which satisfy at least the properties of $\R$, but possibly more.  
Because $\P_B$ consists of the properties of being sum- and skew decomposable, together with a collection of avoidance properties,
each $f_\R$ corresponds to one of the bullet points in Proposition~\ref{prop-strong-rat-indecomps} for a subclass of $\U$.
Specifically, 
letting $B'=\{ \delta\st \Av(\delta)\in\R\}$ and $\V=\U\cap \Av(B')$, we have:
\begin{itemize}
\item
if $\oplusprop,\ominusprop\not\in\R$ then $f_\R$ is the generating function for the class $\V$;
\item
if $\oplusprop\in\R$ and $\ominusprop\not\in\R$ then $f_\R$ is the generating function for the sum decomposable permutations in $\V$;
\item
if $\oplusprop\not\in\R$ and $\ominusprop\in\R$ then $f_\R$ is the generating function for the skew decomposable permutations in $\V$;
\item
if $\oplusprop,\ominusprop\in\R$ then $f_\R=0$.
\end{itemize}
In any case, $f_\R$ is rational.
To complete the proof, we need only note that
\[
g_\Q
=
\sum_{\R\st\Q\subseteq\R\subseteq\P_B}
(-1)^{|\R\setminus\Q|} f_\R
\]
by inclusion-exclusion.
\end{proof}

Our argument that inflations of geometrically griddable classes by strongly rational classes are strongly rational (Theorem~\ref{thm-geom-inflate-enum}) is fairly technical,
but the underlying idea is quite simple:
Given such a class $\D\subseteq \C[\U]$, where $\C$ is geometrically griddable and $\U$ is strongly rational,
we find a suitable set of properties $\P$ so that we can encode all the requisite $\P$-frameworks by a regular language $L_\D$.
Then we use a variant of Proposition \ref{prop-strong-rat-properties}
to show that the generating functions for permutations in $\U$ described by arbitrary $\Q\subseteq\P$ are rational.
Finally, we substitute these rational generating functions into the rational generating function for the language $L_\D$,
yielding a rational generating function for $\D$.

There are two major obstacles to this programme.
The first is that for obvious reasons we need our set of properties to discriminate between inflations $\sigma[\alpha_1,\dots,\alpha_m]$
that belong to $\D$ and those that don't, and also, for technical reasons which will become apparent shortly,
between the inflations that belong to $\U$ and those that don't.
But we cannot assume that $\U$ (and hence $\D$) are finitely based and then use the basis permutations to construct $\P$.
Fortunately, the pwo property comes to the rescue (Proposition \ref{prop-inflate-pwo}), and we can use \emph{relative bases} instead.
Specifically, let $B_\D$ (respectively $B_\U$) be the set of all basis elements of $\D$ (respectively, $\U$) that belong to $\C[\U]$;
note that $B_\D$ and $B_\U$ are both finite because $\C[\U]$ is pwo, and that
\begin{eqnarray*}
\D&=&\C[\U]\cap \Av(B_\D),\\
\U&=&\C[\U]\cap \Av(B_\U).
\end{eqnarray*}
We let $B=B_\D\cup B_\U$, and construct the set of properties $\P_B$ as in Section \ref{sec-frameworks}.

The second obstacle is that
as it stands, the family of properties $\P_B$ is still not sufficiently discriminating.
Indeed, Proposition~\ref{prop-left-greedy-U-inflate} demonstrates that a single $\P_B$-framework may well describe both left-greedy and non-left-greedy $\U$-inflations.  
Consider, for example, a $\P_B$-framework of the form $12[\Q_1,\Q_2]$.  Some $\U$-inflations described by this $\P_B$-framework will be left-greedy (if the first sum component of the second interval cannot `slide' to the first interval), while the others will not be.  To address this issue, we say that the \emph{first component} of the permutation $\pi$ is the first sum component of $\pi$ if $\pi$ is sum decomposable, the first skew component of $\pi$ if $\pi$ is skew decomposable, and $\pi$ itself otherwise (if $\pi$ is neither sum nor skew decomposable).  Observe that this notion is well-defined because no permutation is both sum and skew decomposable.  We can now introduce the \emph{first component avoidance properties}:
\[
\Av^\initcomp(\delta)
=
\{\pi \st \mbox{the first component of $\pi$ avoids $\delta$}\}.
\]
We need the full range of these properties,
\[
\P_B^\initcomp
=
\{\Av^\initcomp(\delta) \st \mbox{$\delta\le\beta$ for some $\beta\in B$}\}.
\]

The enlarged family of properties $\tilde{\P}_B=\P_B\cup\P_B^\initcomp$ is query-complete. 
Indeed, it suffices to show that the properties $\P_B^\initcomp$ of $\sigma[\Q_1,\dots,\Q_m]$ are completely determined by $\sigma$ and the sets $\Q_i\cap\P_B$ of properties.  If $\sigma=\tau\oplus\xi$ for a sum indecomposable $\tau$ and nonempty $\xi$, we see that
\[
\Av^\initcomp(\delta)\in\P_B^\initcomp(\sigma[\Q_1,\dots,\Q_m])
\mbox{ if and only if }
\Av(\delta)\in\P_B(\tau[\Q_1,\dots,\Q_{|\tau|}]),
\]
which can be determined from $\tau$ and $\Q_1,\dots,\Q_{|\tau|}$ because $\P_B$ is query-complete.  
The analogous assertion holds when $\sigma=\tau\ominus\xi$ for a skew indecomposable $\tau$ and nonempty $\xi$.  
If $\sigma$ is neither sum nor skew decomposable, then the criterion is even simpler:
\[
\Av^\initcomp(\delta)\in\P_B^\initcomp(\sigma[\Q_1,\dots,\Q_m])
\mbox{ if and only if }
\Av(\delta)\in\P_B(\sigma[\Q_1,\dots,\Q_m]),
\]
which again can be determined from $\sigma$ and $\Q_1,\dots,\Q_m$ because $\P_B$ is query-complete.

We begin by reiterating that, because $B$ contains the relative bases for $\D$ and $\U$ in $\C[\U]$,
the $\tilde{\P}_B$ frameworks respect the boundary between each of these two classes and its complement in $\C[\U]$.

\begin{proposition}
\label{nrlemma1}
Given a $\tilde{\P}_B$-framework $\sigma[\Q_1,\dots,\Q_m]$ with $\sigma\in\C$, either all $\U$-inflations described by it lie in $\D$ (respectively $\U$) or none do.
\end{proposition}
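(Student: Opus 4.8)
The key point is that $\tilde{\P}_B$ is query-complete, so for a nonempty $\tilde{\P}_B$-framework $\fF = \sigma[\Q_1,\dots,\Q_m]$ the collection of properties $\tilde{\P}_B(\fF) = \tilde{\P}_B(\pi)$ is the same for every permutation $\pi$ described by $\fF$. In particular, since $\Av(\beta) \in \P_B \subseteq \tilde{\P}_B$ for every $\beta \in B_\D$, whether a described permutation $\pi$ avoids all of $B_\D$ is determined by $\tilde{\P}_B(\fF)$ alone, and hence is the same for all permutations described by $\fF$. The same applies with $B_\U$ in place of $B_\D$.

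The plan is as follows. First I would fix a $\tilde{\P}_B$-framework $\sigma[\Q_1,\dots,\Q_m]$ with $\sigma\in\C$ that is nonempty, and let $\pi = \sigma[\alpha_1,\dots,\alpha_m]$ and $\pi' = \sigma[\alpha_1',\dots,\alpha_m']$ be two $\U$-inflations described by it, so $\alpha_i,\alpha_i'\in\U$ and $\tilde{\P}_B(\alpha_i) = \Q_i = \tilde{\P}_B(\alpha_i')$ for all $i$. Both $\pi$ and $\pi'$ lie in $\C[\U]$ since $\sigma\in\C$. Next I would invoke query-completeness of $\tilde{\P}_B$ to conclude $\tilde{\P}_B(\pi) = \tilde{\P}_B(\pi')$; in particular $\P_B(\pi) = \P_B(\pi')$, so $\pi$ and $\pi'$ agree on each property $\Av(\beta)$ with $\beta\in B$. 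Then I would use the two identities
\[
\D = \C[\U]\cap\Av(B_\D),
\qquad
\U = \C[\U]\cap\Av(B_\U)
\]
to finish: $\pi\in\D$ iff $\pi$ avoids every $\beta\in B_\D$ iff $\pi'$ avoids every $\beta\in B_\D$ (since they agree on these avoidance properties) iff $\pi'\in\D$; and identically for $\U$ using $B_\U\subseteq B$.

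There is really no serious obstacle here — the proposition is essentially a bookkeeping consequence of the machinery already set up, namely that $\tilde{\P}_B$ is query-complete (verified in the preceding paragraphs) and that $B$ was chosen to contain relative bases $B_\D$ and $B_\U$ of $\D$ and $\U$ within $\C[\U]$ (so that membership in each of these classes, for permutations already known to lie in $\C[\U]$, is detected by the avoidance properties in $\P_B$). The only point that requires a word of care is the reduction from ``same properties'' to ``same membership'': one must be careful that a $\U$-inflation described by the framework genuinely lies in $\C[\U]$, which it does precisely because the skeleton $\sigma$ lies in $\C$ and the parts lie in $\U$. Everything else is a direct application of query-completeness and the two displayed relative-basis identities, so the proof is short.
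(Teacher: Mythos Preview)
Your proposal is correct and follows exactly the reasoning the paper intends: the paper does not give an explicit proof of this proposition, treating it as immediate from the fact that $B$ contains the relative bases $B_\D$ and $B_\U$ of $\D$ and $\U$ in $\C[\U]$ together with the query-completeness of $\tilde{\P}_B$. Your write-up simply makes this explicit.
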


Next we show that $\tilde{\P}_B$-frameworks can also distinguish between left-greedy and non-left-greedy $\U$-inflations.

\begin{proposition}
\label{lem-firstcomp-capture}
Given a $\tilde{\P}_B$-framework $\sigma[\Q_1,\dots,\Q_m]$ with $\sigma\in\C$, either all $\U$-inflations described by it are left-greedy or none are.
\end{proposition}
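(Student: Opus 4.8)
The plan is to combine Proposition~\ref{prop-left-greedy-U-inflate}, which characterises exactly when a $\U$-decomposition fails to be left-greedy, with the query-completeness of $\tilde{\P}_B$ and the observation that $\tilde{\P}_B$ is discriminating enough to detect the relevant conditions (G1)--(G3). Fix a $\tilde{\P}_B$-framework $\fF=\sigma[\Q_1,\dots,\Q_m]$ with $\sigma\in\C$ and suppose $\pi=\sigma[\alpha_1,\dots,\alpha_m]$ and $\pi'=\sigma[\alpha'_1,\dots,\alpha'_m]$ are two $\U$-inflations described by $\fF$, so that $\tilde{\P}_B(\alpha_i)=\tilde{\P}_B(\alpha'_i)=\Q_i$ for every $i$. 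The goal is to show that $\pi$ is left-greedy if and only if $\pi'$ is. By Proposition~\ref{prop-left-greedy-U-inflate}, $\pi$ fails to be left-greedy precisely when some interval $\interval{i}{j}$ of length at least $2$ that corresponds to an interval $\tau$ of $\sigma$ satisfies one of (G1)--(G3); I will show that each of these three conditions, when applied to $\pi$, depends on $(\alpha_i,\dots,\alpha_j)$ only through $(\Q_i,\dots,\Q_j)$, and hence holds for $\pi$ if and only if it holds for $\pi'$.

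First, note that which intervals $\interval{i}{j}$ of $[1,m]$ give rise to an interval $\tau$ of $\sigma$ depends only on $\sigma$, not on the $\alpha_i$, so the set of candidate intervals is the same for $\pi$ and $\pi'$. For condition (G1), $\tau[\alpha_i,\dots,\alpha_j]\in\U$: since $B_\U\subseteq B$ and $\U=\C[\U]\cap\Av(B_\U)$ (and $\tau[\alpha_i,\dots,\alpha_j]$ is automatically in $\C[\U]$ as $\tau\le\sigma\in\C$), membership in $\U$ is equivalent to avoiding every $\beta\in B_\U$; because $\P_B$ is query-complete and contains $\Av(\beta)$ for each $\beta\in B$, whether $\tau[\alpha_i,\dots,\alpha_j]$ avoids each such $\beta$ is determined by $\tau$ together with $\P_B(\alpha_i),\dots,\P_B(\alpha_j)$, hence by $\Q_i,\dots,\Q_j$. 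For condition (G2), $\tau=12$ and the sum of $\alpha_i$ with the first sum component of $\alpha_{i+1}$ lies in $\U$: the permutation $\alpha_i\oplus(\text{first sum component of }\alpha_{i+1})$ is sum decomposable, so it lies in $\U$ iff it avoids every $\beta\in B_\U$; avoidance of $\beta$ by this permutation is governed by whether $\alpha_i$ avoids appropriate sum-prefixes of $\beta$ and whether the first sum component of $\alpha_{i+1}$ avoids appropriate sum-suffixes of the complementary piece — exactly the kind of information recorded by $\P_B$ (the avoidance properties $\Av(\delta)$ for $\delta\le\beta$) together with the first-component avoidance properties $\P_B^\initcomp$, which were introduced precisely to monitor the first sum/skew component. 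Thus (G2) is determined by $\Q_i$ and $\Q_{i+1}$. Condition (G3) is handled dually, using $\Av^\initcomp(\delta)$ applied to skew components.

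Assembling these observations: for every candidate interval $\interval{i}{j}$ and every $\ell\in\{1,2,3\}$, condition (G$\ell$) holds for $\pi$ iff a certain predicate depending only on $\sigma$ and $\Q_i,\dots,\Q_j$ holds; since $\pi$ and $\pi'$ share the same $\sigma$ and the same tuple of property-sets, (G$\ell$) holds for $\pi$ iff it holds for $\pi'$. Taking the disjunction over all candidate intervals and all $\ell$, Proposition~\ref{prop-left-greedy-U-inflate} gives that $\pi$ is non-left-greedy iff $\pi'$ is, which is the claim. The main obstacle I anticipate is verifying carefully that conditions (G2) and (G3), which reference the \emph{first} sum (or skew) component of the \emph{neighbouring} interval $\alpha_{i+1}$, are genuinely captured by the properties at our disposal: this is exactly the role of $\P_B^\initcomp$, but one must check that $\tilde{\P}_B(\alpha_{i+1})=\Q_{i+1}$ determines not just whether $\alpha_{i+1}$ is sum/skew decomposable and what its first component avoids, but enough about the interaction of that first component with $\alpha_i$ to decide membership of their sum (or skew sum) in $\U$ — which follows since a basis element $\beta$ of $\U$ embeds into $\alpha_i\oplus c$ (with $c$ the first sum component of $\alpha_{i+1}$) iff some sum-prefix $\delta_1$ of $\beta$ embeds into $\alpha_i$ and the complementary sum-suffix $\delta_2$ embeds into $c$, and both $\Av(\delta_1)\in\P_B$ and $\Av^\initcomp(\delta_2)\in\P_B^\initcomp$ are among our properties with $\delta_1,\delta_2\le\beta\in B$.
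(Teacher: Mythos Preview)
Your argument is correct and follows essentially the same route as the paper: both invoke Proposition~\ref{prop-left-greedy-U-inflate} and verify that each of (G1)--(G3) is determined by the framework data $(\sigma;\Q_1,\dots,\Q_m)$ alone.

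One small point of care: in your final sentence you detect ``$\delta_2$ embeds into the first sum component $c$ of $\alpha_{i+1}$'' via the property $\Av^\initcomp(\delta_2)$, but $\Av^\initcomp$ tracks the first \emph{component}, which coincides with the first sum component only when $\alpha_{i+1}$ is actually sum decomposable. When $\alpha_{i+1}$ is sum indecomposable (so $c=\alpha_{i+1}$), the relevant witness is $\Av(\delta_2)\in\Q_{i+1}$ instead; since $\oplusprop,\ominusprop\in\tilde{\P}_B$ tell you which case you are in, the conclusion still holds, but the case split should be made explicit. The paper handles this more cleanly by first disposing of (G1) via Proposition~\ref{nrlemma1} and then analysing (G2) only under the assumption that (G1) fails at that interval; this forces $\alpha_{i+1}$ to be genuinely sum decomposable, so that the first component \emph{is} the first sum component and $\Av^\initcomp$ applies directly.
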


\begin{proof}
We need to show that either every $\U$-inflation described by $\sigma[\Q_1,\dots,\Q_m]$ satisfies one of the three conditions (G1)--(G3) of Proposition~\ref{prop-left-greedy-U-inflate} or that none do.  Suppose that some $\U$-inflation, say $\pi=\sigma[\alpha_1,\dots,\alpha_m]$, described by $\sigma[\Q_1,\dots,\Q_m]$ satisfies at least one of these conditions.  
If this condition is (G1), the assertion follows from Proposition~\ref{nrlemma1}. Now suppose that (G1) is not satisfied, and that (G2) is.
Thus $\sigma$ contains an increasing run $\sigma(i+1)=\sigma(i)+1$ and the sum of $\alpha_i$ and the first sum component of $\alpha_{i+1}$ lies in $\U$. 
Furthermore, since (G1) does not hold, this first sum component is not the entire $\alpha_{i+1}$.
Translating to our properties, this will happen if and only if $\oplusprop\in \Q_{i+1}$, $\ominusprop\not\in\Q_{i+1}$, and
for all $\delta_1,\delta_2$ such that $\Av(\delta_1)\not\in\Q_i\cap \P_B$ and $\Av^\initcomp(\delta_2)\not\in\Q_{i+1}\cap\P_B^\initcomp$
we have $\delta_1\oplus\delta_2\not\in B_\U$.
Therefore (G2) will hold for $\pi$ if and only if it holds for all $\U$-inflations described by $\sigma[\Q_1,\dots,\Q_m]$. A similar argument applies in the case that (G3) is satisfied but (G1) is not, completing the proof.
\end{proof}

Proposition~\ref{lem-firstcomp-capture} allows us to call a non-empty $\tilde{\P}_B$-framework \emph{left-greedy} if every $\U$-inflation it describes is left-greedy.  The price we pay for this additional discriminating power of $\tilde{\P}_B$ is that we must strengthen Proposition~\ref{prop-strong-rat-properties} to include first component properties.

\begin{proposition}
\label{prop-strong-rat-properties-firstcomp}
Let $\U$ be a strongly rational permutation class and $B$ be a finite set of permutations.  For every subset $\Q\subseteq\tilde{\P}_B$ of properties, the generating function for the set of permutations $\pi\in\U$ satisfying $\tilde{\P}_B(\pi)=\Q$ is rational.
\end{proposition}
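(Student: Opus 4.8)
The plan is to reduce the statement to Proposition~\ref{prop-strong-rat-properties} by ``decoupling'' the first-component properties $\P_B^\initcomp$ from the ordinary properties $\P_B$. The key observation is that whether $\pi$ satisfies a given $\Av^\initcomp(\delta)$ depends only on the \emph{first component} of $\pi$ in the sense defined above, and that $\pi$ is determined (as far as $\tilde\P_B$ is concerned) by splitting it into its first component and the remainder. So I would begin by partitioning $\U$ according to the type of $\pi$: sum decomposable, skew decomposable, or neither. By Proposition~\ref{prop-strong-rat-indecomps} each of these three subsets of $\U$ has a rational generating function, and moreover each is a union (or intersection) of the sets counted in Proposition~\ref{prop-strong-rat-properties}, so refining by $\P_B$-type keeps everything rational.

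Next I would handle the sum-decomposable case (the skew case being dual, and the ``neither'' case being trivial since there $\P_B^\initcomp(\pi)$ is literally determined by $\P_B(\pi)$ via the last displayed equivalence in the paragraph preceding Proposition~\ref{nrlemma1}). Write a sum-decomposable $\pi \in \U$ uniquely as $\pi = \tau \oplus \xi$ with $\tau$ sum indecomposable (and nonempty) and $\xi$ nonempty; here $\tau$ is the first component. Then $\tilde\P_B(\pi)$ is determined by the pair $(\tilde\P_B(\tau), \tilde\P_B(\xi))$: the ordinary properties $\P_B(\pi)$ are determined by $\P_B(\tau)$ and $\P_B(\xi)$ since $\P_B$ is query-complete (this is the $\sigma = 12$ case of the query-completeness argument in Section~\ref{sec-frameworks}), and the first-component properties satisfy $\Av^\initcomp(\delta) \in \tilde\P_B(\pi) \iff \Av(\delta) \in \P_B(\tau)$. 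Hence the set of sum-decomposable $\pi \in \U$ with $\tilde\P_B(\pi) = \Q$ is a finite disjoint union, over the pairs $(\Q', \Q'')$ of $\P_B$-types that ``assemble'' to give $\Q$, of the sets
\[
\{\tau \in \U_{\mathrm{si}} : \tilde\P_B(\tau) = \Q'\} \ \oplus\ \{\xi \in \U : \P_B(\xi) = \Q''\},
\]
where $\U_{\mathrm{si}}$ denotes the sum indecomposable permutations in $\U$. Since the generating function of a set of sums $X \oplus Y$ (with $X$ sum indecomposable) is the product of the generating functions, it suffices to know that each factor has a rational generating function. The second factor is rational by Proposition~\ref{prop-strong-rat-properties}. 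For the first factor, note that $\U_{\mathrm{si}}$ is itself strongly rational (it is a subclass intersection issue — more carefully, one applies Proposition~\ref{prop-strong-rat-indecomps} and Proposition~\ref{prop-strong-rat-properties} to the relevant avoidance-restricted subclasses $\U \cap \Av(B')$ and takes their sum-indecomposable parts), and on sum indecomposable $\tau$ the value $\tilde\P_B(\tau)$ is again determined purely by $\P_B$-data, so the first factor also reduces to Proposition~\ref{prop-strong-rat-properties} applied to sum indecomposable permutations.

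The main obstacle I anticipate is bookkeeping rather than conceptual: one must check that the ``assembly'' relation — which pairs $(\Q', \Q'')$ of $\P_B$-types, together with the requirement that $\tau$ be sum indecomposable, yield a prescribed $\tilde\P_B$-type $\Q$ — is well-defined and finite, so that the disjoint union above is finite and the resulting sum of products of rational functions is rational. This follows from query-completeness of $\tilde\P_B$ (already established in the excerpt), which guarantees $\tilde\P_B(\tau \oplus \xi)$ depends only on $(\tilde\P_B(\tau), \tilde\P_B(\xi))$, but one should also verify that restricting to sum indecomposable $\tau$ is itself a $\P_B$-expressible condition — which it is, since $\oplusprop \notin \P_B(\tau)$ captures exactly sum-indecomposability, and $\oplusprop \in \P_B$. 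Assembling the three cases (sum decomposable, skew decomposable, neither), each contributing a rational generating function, and summing, gives the rationality of the generating function for $\{\pi \in \U : \tilde\P_B(\pi) = \Q\}$, completing the proof.
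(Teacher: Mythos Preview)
Your proposal follows essentially the same route as the paper's proof: both split into cases according to whether $\pi$ is sum decomposable, skew decomposable, or neither, dispose of the last case by observing that first-component properties collapse to ordinary avoidance properties there, and treat the sum decomposable case by writing $\pi=\sigma\oplus\tau$ with $\sigma$ sum indecomposable and expressing the desired generating function as a finite sum of products of the rational functions $g_\S$ supplied by Proposition~\ref{prop-strong-rat-properties}. The only cosmetic difference is bookkeeping---the paper indexes the first component directly by its $\P_B$-type $\R$ or $\R'=\R\cup\{\ominusprop\}$ rather than by a $\tilde\P_B$-type $\Q'$---but since on sum indecomposable permutations the $\tilde\P_B$-type is determined by the $\P_B$-type, the two parameterisations are equivalent.
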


\begin{proof}
For any set $\S\subseteq\P_B$ of properties, let $g_\S$ denote the generating function for permutations in $\U$ satisfying $\P_B(\pi)=\S$; all such generating functions are rational by Proposition~\ref{prop-strong-rat-properties}.  Further define
\[
\R=\{\Av(\delta)\st \Av^\initcomp(\delta)\in\Q\},
\]
and let $f$ denote the generating function of permutations in $\U$ satisfying $\tilde{\P}_B(\pi)=\Q$.  There are three cases to consider.

First suppose that $\oplusprop,\ominusprop\notin\Q$.  Thus if $\pi$ satisfies $\tilde{\P}_B(\pi)=\Q$ then the first component of $\pi$ is $\pi$ itself, so $\pi\in\Av^\initcomp(\delta)$ if and only if $\pi\in\Av(\delta)$.  Therefore $f=0$ unless $\R=\Q\cap\P_B$, in which case $f=g_{\Q\cap\P_B}$, which is rational.

Now suppose that $\oplusprop\in\Q$, so we aim to count sum decomposable permutations.  If $\ominusprop\in\Q$ then $f=0$, so we may assume that $\ominusprop\notin\Q$.  We now see that $f$ counts permutations of the form $\sigma\oplus\tau$ where $\P_B(\sigma)$ is equal to $\R$ or to $\R'=\R\cup\{\ominusprop\}$, and $\P_B(\sigma\oplus\tau)=\Q\cap\P_B=\T$.  Thus we have that
\[
f
=
\sum_{\S\st\P_B(12[\R,\S])=\T} g_{\R}g_{\S}
+
\sum_{\S\st\P_B(12[\R',\S])=\T} g_{\R'}g_{\S},
\]
which is also rational.  The case where $\ominusprop\in\Q$ is analogous, completing the proof.
\end{proof}

A crucial step in our argument is a regular encoding of left-greedy $\tilde{\P}_B$-frameworks.  While we know (in principle) how to recognise a single left-greedy framework, this does not help to encode them all simultaneously.  
To address this issue, we adapt the marking technique from Albert, Atkinson, Bouvel, Ru\v{s}kuc, and Vatter~\cite{albert:geometric-grid-:}.  A \emph{marked permutation} is a permutation in which the entries are allowed to be marked, which we designate with an overline.  
The intention of a marking is to highlight some special characteristic of the marked entries.

A \emph{marked $\P$-framework} is then a $\P$-framework $\sigma[\Q_1,\dots,\Q_m]$ in which the skeleton $\sigma$ is marked.  
The mapping $\bij^{\P}$
defined in Section \ref{sec-frameworks}
can be extended in a natural manner to a mapping $\overline{\bij}^{\P}$ from
$\left(\left(\Sigma\times 2^{\P}\right)\cup \left(\overline{\Sigma}\times 2^{\P}\right)\right)^\ast$
to the set of marked $\P$-frameworks $\sigma[\Q_1,\dots,\Q_m]$ with $\sigma\in\C$;
here $\overline{\Sigma}=\{\overline{a}\st a\in\Sigma\}$ is the \emph{marked cell alphabet}, and $\overline{\bij}^{\P}$ maps marked letters to marked entries.

We can extend the order on $\P$-frameworks defined in Section~\ref{sec-frameworks} to this context as follows: for two marked frameworks we write
\[
\tau[\R_1,\dots,\R_k]\le\sigma[\Q_1,\dots,\Q_m]
\]
if there are indices $1\le i_1<\cdots<i_k\le m$ such that
\begin{itemize}
\item $\sigma(i_1),\sigma(i_2),\dots,\sigma(i_k)$ is order isomorphic to $\tau$ (as ordinary, unmarked, permutations); and
\item for all $1\le j\le k$, $\sigma(i_j)$ is marked if and only if $\tau(j)$ is marked; and
\item for all $1\le j\le k$, $\R_j=\Q_{i_j}$.
\end{itemize}
With this order, it follows from Proposition~\ref{prop-properties-of-bij} that the mapping $\overline{\bij}^{\P}$ is order-preserving.

\begin{theorem}
\label{thm-geom-inflate-enum}
The class $\C[\U]$ is strongly rational for all geometrically griddable classes $\C$ and strongly rational classes $\U$.
\end{theorem}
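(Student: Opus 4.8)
The plan is to reduce everything to a single class $\D\subseteq\C[\U]$ and show directly that its generating function is rational; since $\C[\U]$ is pwo by Proposition~\ref{prop-inflate-pwo}, and since any subclass of $\C[\U]$ is again of this form (being $\C[\U]$ intersected with an avoidance set), establishing rationality for an arbitrary such $\D$ gives strong rationality of $\C[\U]$. As in Sections~\ref{sec-frameworks} and~\ref{sec-alg-gf}, we may assume $M$ is a partial multiplication matrix with cell alphabet $\Sigma$, and $\C\subseteq\Geom(M)$. We take $B=B_\D\cup B_\U$ (the relative bases of $\D$ and $\U$ in $\C[\U]$, both finite by pwo) and work with the query-complete family $\tilde{\P}_B=\P_B\cup\P_B^\initcomp$ introduced above, so that frameworks know whether their inflations lie in $\D$, whether they lie in $\U$, and — crucially — whether they are left-greedy (Propositions~\ref{nrlemma1} and~\ref{lem-firstcomp-capture}).

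The heart of the argument is a \emph{regular} encoding of the nonempty, \emph{left-greedy}, \emph{simple} $\tilde{\P}_B$-frameworks whose skeleton lies in $\Geom(M)$, analogous to Theorem~\ref{thm-framework-regular} but with the extra left-greedy restriction. First I would obtain, from Theorem~\ref{thm-geom-griddable-all}(iv) applied to $\C\cap\Geom(M)$, the regular language $L_S\subseteq\Sigma^\ast$ bijecting onto the simple permutations of $\Geom(M)$, and lift it to $L_S^{\tilde{\P}}\subseteq((\Sigma\cup\overline{\Sigma})\times2^{\tilde{\P}})^\ast$ via the first-projection homomorphism. Exactly as in the proof of Theorem~\ref{thm-framework-regular}, intersecting with the subword-closed set $\{w : \Q\subseteq\tilde{\P}_B(\overline{\bij}^{\tilde{\P}}(w))\}$ and the upward-closed set $\{w : \tilde{\P}_B(\overline{\bij}^{\tilde{\P}}(w))\subseteq\Q\}$ pins down the property set; the new ingredient is to also impose left-greediness. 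Here is where the marking technique earns its keep: I would use the marks on the skeleton to flag precisely those positions $i$ at which a sum- or skew-prefix of $\alpha_{i+1}$ could slide left into $\alpha_i$ (or, for the (G1) case, where intervals could merge). By Proposition~\ref{lem-firstcomp-capture} whether such a slide is possible is determined by $\sigma$ and the $\Q_i$'s, so the ``violating'' configurations form a recognizable pattern; ruling them out is a matter of intersecting with the regular language of words containing no marked letter in a forbidden local configuration, together with a subword-closed/upward-closed condition to handle the global (G1) merges. The upshot: for each $\Q\subseteq\tilde{\P}_B$ there is a regular language $L_\Q^{\mathrm{lg}}$ bijecting, via $\overline{\bij}^{\tilde{\P}}$, onto the nonempty simple left-greedy frameworks with skeleton in $\Geom(M)$ and property set $\Q$.

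Next I would set up the enumeration. For each $\Q\subseteq\tilde{\P}_B$, let $h_\Q$ be the generating function for $\{\pi\in\U : \tilde{\P}_B(\pi)=\Q\}$; these are all rational by Proposition~\ref{prop-strong-rat-properties-firstcomp}. Every permutation $\pi\in\D$ has a unique left-greedy $\U$-decomposition $\pi=\sigma[\alpha_1,\dots,\alpha_m]$ whose skeleton $\sigma$ is the $\U$-profile; by Proposition~\ref{simples-in-substitution-completion} the simple permutations of $\D$ lie in $\C$, so $\sigma\in\C\subseteq\Geom(M)$ — hence this decomposition is counted by exactly one word in exactly one $L_\Q^{\mathrm{lg}}$. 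Write the noncommutative generating function $g_\Q=\sum_{w\in L_\Q^{\mathrm{lg}}}w$ (regular, so a rational expression over the letters $(u,\R)$ and $(\overline u,\R)$). Substituting $(u,\R),(\overline u,\R)\leftarrow h_\R$ turns each $g_\Q$ into an honest rational function of $x$ — this is legitimate because the substitution respects the linear system defining $g_\Q$, rationality being closed under composition with rational functions. Summing $f_\D=\sum_{\Q}(\text{this specialization of }g_\Q)$ over those $\Q$ with $\{\Av(\beta):\beta\in B_\D\}\subseteq\Q$, and adding the $x$ term for the permutation $1$ when $1\in\D$, yields a rational generating function for $\D$. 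Since $\D$ was arbitrary, $\C[\U]$ is strongly rational.

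The main obstacle is unquestionably the left-greedy regular encoding: Theorem~\ref{thm-framework-regular} gives simplicity and the property-set bookkeeping for free, but it does \emph{not} single out left-greedy frameworks, and the set of left-greedy frameworks is not obviously closed under subwords or extensions. Proposition~\ref{lem-firstcomp-capture} tells us left-greediness is a \emph{property} of a framework, but converting ``this framework is left-greedy'' into a \emph{regular} constraint requires localising the (G2)/(G3) slides to adjacent skeleton entries and handling the (G1) merges via downset/upset conditions, and then verifying that the marking machinery (via $\overline{\bij}^{\tilde{\P}}$ and its order-preservation, Proposition~\ref{prop-properties-of-bij}) genuinely produces a regular language. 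Everything after that — Proposition~\ref{prop-strong-rat-properties-firstcomp}, the rational substitution, the inclusion–exclusion over $\Q$ — is routine bookkeeping of the kind already carried out for Theorem~\ref{thm-context-free}, only with ``algebraic'' replaced throughout by ``rational.''
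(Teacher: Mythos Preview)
There is a genuine gap, and it lies in the very step you identify as the heart of the argument. You propose to encode ``nonempty, left-greedy, \emph{simple} $\tilde{\P}_B$-frameworks,'' building on the language $L_S$ that bijects onto the \emph{simple} permutations of $\Geom(M)$. But the skeleton of the left-greedy $\U$-decomposition of $\pi\in\D$ is its $\U$-profile, and the $\U$-profile is almost never simple. Take the degenerate case $\U=\{1\}$: then $\C[\U]=\C$, and the $\U$-profile of every $\pi\in\C$ is $\pi$ itself. Your encoding, being built over $L_S$, would capture only the simple permutations of $\C$ and miss everything else. The invocation of Proposition~\ref{simples-in-substitution-completion} is a symptom of the same confusion: that proposition concerns simple permutations in $\langle\C\rangle$, but the $\U$-profile $\sigma$ is not asserted to be simple, and the reason $\sigma\in\C$ is simply that $\sigma$ is contained in whichever $\tau\in\C$ witnesses $\pi\in\C[\U]$, and $\C$ is downward closed.

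What the paper actually does is abandon simple skeletons entirely in this section. The language $L_\D$ is built over words encoding \emph{arbitrary} permutations of $\Geom(M)$ as skeletons (so the analogue of Theorem~\ref{thm-framework-regular} is not used). Left-greediness is then isolated by the marking trick in a rather different way from what you sketch: rather than marking single ``slidable'' positions, the paper marks an entire \emph{interval} of the skeleton that could witness a (G1)--(G3) violation, calls such markings ``threatening,'' and observes that threateningly marked frameworks form a downset in the marked-framework order (hence their preimage $\overline{J}$ is subword-closed, hence regular). Since every framework has a trivial threatening marking, the left-greedy frameworks are exactly those with no threatening marking having between two and $m-1$ marked letters; this is captured by $\Gamma(\overline{J})\setminus\Gamma(\overline{J}\cap\overline{K})$ where $\overline{K}$ is the (obviously regular) language of words with at least two marked and at least one unmarked letter, and $\Gamma$ forgets markings. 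The final substitution step---replacing letters by the rational $f_\Q$ from Proposition~\ref{prop-strong-rat-properties-firstcomp}---is then exactly as you describe. The architecture you propose is right in spirit; the error is in bolting it onto the simple-skeleton machinery of Section~\ref{sec-frameworks} rather than the flat $\U$-profile decomposition that Section~7 requires.
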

\begin{proof}
We retain the set-up introduced so far, so $\C\subseteq\Geom(M)$ for a partial multiplication matrix $M$ with cell alphabet $\Sigma$, $\D$ is a subclass of $\C[\U]$, and $B=B_\D\cup B_\U$ where $B_\D$ and $B_\U$ denote, respectively, the relative bases of $\D$ and $\U$ in $\C[\U]$.

We now seek to 
encode the nonempty, left-greedy $\tilde{\P}_B$-frameworks by means of a regular language
and the mapping $\bij^{\tilde{\P}_B}$.
To do this we mark an interval of $\sigma$ which might satisfy one of the conditions (G1)--(G3) of Proposition~\ref{prop-left-greedy-U-inflate}.  To this end, we say that a marking of a $\tilde{\P}_B$-framework $\sigma[\Q_1,\dots,\Q_m]$ is \emph{threatening} if the marked entries constitute a (possibly trivial) interval of $\sigma$ given by the indices $\interval{i}{j}$ which is order isomorphic to $\tau$, and either
\begin{itemize}
\item the permutations described by $\tau[\Q_i,\Q_{i+1},\dots,\Q_j]$ lie in $\U$ (corresponding to (G1)); or
\item $|\tau|=2$ and $\tau[\Q_i,\Q_{i+1}]$ is not a left-greedy $\tilde{\P}_B$-framework (corresponding to (G2) and (G3)).
\end{itemize}
Note that every marked $\tilde{\P}_B$-framework with zero, one, or all marked letters is threatening, and thus every $\tilde{\P}_B$-framework has several threatening markings.  However, if a $\tilde{\P}_B$-framework has a threatening marking with two or more but not all marked letters, then that $\tilde{\P}_B$-framework is not left-greedy.  Therefore, the left-greedy $\tilde{\P}_B$-frameworks are precisely the $\tilde{\P}_B$-frameworks which do not have such markings, and our goal is to identify them.

Importantly, given a threateningly marked $\tilde{\P}_B$-framework $\sigma[\Q_1,\dots,\Q_m]$ which describes permutations from $\D$
(recall Proposition~\ref{nrlemma1}), if $\tau[\R_1,\dots,\R_k]\le\sigma[\Q_1,\dots,\Q_m]$ in the order defined above, then $\tau[\R_1,\dots,\R_k]$ 
is also threateningly marked (and also describes permutations in $\D$).  
Therefore the set of all threateningly marked $\tilde{\P}_B$-frameworks is a downset.
Since $\overline{\bij}^{\tilde{\P}_B}$ is order-preserving, the pre-image of this downset
\[
\overline{J}=
\left\{
w\in\left(\left(\Sigma\times 2^{\P}\right)\cup \left(\overline{\Sigma}\times 2^{\P}\right)\right)^\ast
\st
\overline{\bij}^{\tilde{\P}_B}(w)
\mbox{ is threateningly marked}
\right\}
\]
is subword-closed, and thus regular.

Now let
\[
\Gamma
\st
\left(\left(\Sigma\times 2^{\tilde{\P}_B}\right)\cup \left(\overline{\Sigma}\times 2^{\tilde{\P}_B}\right)\right)^\ast
\rightarrow
\left(\Sigma\times 2^{\tilde{\P}_B}\right)^\ast
\]
denote the homomorphism which removes markings.  Because every $\tilde{\P}_B$-framework has a threatening marking, $\Gamma(\overline{J})$ encodes all $\tilde{\P}_B$-frameworks.  We want to remove from $\Gamma(\overline{J})$ the set of non-left-greedy $\tilde{\P}_B$-frameworks.  These non-left-greedy frameworks are precisely the frameworks which have a marked encoding in $\overline{J}\cap\overline{K}$ where
\[
\overline{K}
=
\left\{
\begin{array}{l}
\mbox{words in $\left(\left(\Sigma\times 2^{\tilde{\P}_B}\right)\cup \left(\overline{\Sigma}\times 2^{\tilde{\P}_B}\right)\right)^\ast$ with at}\\
\mbox{least two marked and one unmarked letters}
\end{array}\right\}.
\]
The language $\overline{K}$ is clearly regular.  Furthermore,  $\Gamma(\overline{J})$ and $\Gamma(\overline{J}\cap\overline{K})$ are both regular
as homomorphic images of regular languages.  Therefore the language 
$L_\D=\Gamma(\overline{J})\setminus\Gamma(\overline{J}\cap\overline{K})$ is regular, and it
encodes nonempty, left-greedy $\tilde{\P}_B$-frameworks describing permutations from $\D$.

Recall that every permutation $\pi\in\D$ has a unique left-greedy $\U$-decomposition $\pi=\sigma[\alpha_1,\dots,\alpha_m]$
with $\sigma\in\C$ and $\alpha_1,\dots,\alpha_m\in\U$.
Furthermore, recall that every $\alpha\in\U$ is described by a unique $\tilde{\P}_B$ framework.
Therefore, a generating function for the class $\D$ is obtained by taking 
the generating function $g=\sum_{w\in L_\D} w$ in non-commuting variables representing the letters of our alphabet, 
and substituting for each letter $(u,\Q)$ the generating function $f_\Q$
for the set of all permutations in $\U$ described by $\Q$.
The function $g$ is rational because $L_\D$ is a regular language,
and the functions $f_\Q$ are rational by Proposition~\ref{prop-strong-rat-properties-firstcomp}.
It follows that $\D$ itself has a rational generating function, and the theorem is proved.
\end{proof}

\section{Small Permutation Classes}
\label{sec-spc-rational}

With Theorem~\ref{thm-geom-inflate-enum}, we have all the enumerative machinery we need to prove that small permutation classes are strongly rational, but we must spend a bit of time beforehand aligning the results of Vatter~\cite{vatter:small-permutati:} with those of this paper.     

One of the biggest differences between the two approaches is that the grid classes we have discussed so far are much 
more constrained than the generalised grid classes of Vatter~\cite{vatter:small-permutati:}.  Suppose that $\M$ is a $t\times u$ matrix of permutation classes (we use calligraphic font for matrices containing permutation classes).  An {\it $\M$-gridding\/} of the permutation $\pi$ of length $n$ in this context is a pair of sequences $1=c_1\le\cdots\le c_{t+1}=n+1$ (the column divisions) and $1=r_1\le\cdots\le r_{u+1}=n+1$ (the row divisions) such that for all $1\le k\le t$ and $1\le\ell\le u$, the entries of $\pi$ from indices $c_k$ up to but not including $c_{k+1}$, which have values from $r_{\ell}$ up to but not including $r_{\ell+1}$ are either empty or order isomorphic to an element of $\M_{k,\ell}$.  The {\it grid class of $\M$\/}, written $\Grid(\M)$, consists of all permutations which possess an $\M$-gridding.  Furthermore, we say that the permutation class $\C$ is {\it $\D$-griddable\/} if $\C\subseteq\Grid(\M)$ for some (finite) matrix $\M$ whose entries are all equal to $\D$.

Between these generalised grid classes and the geometric grid classes we have been considering lie the \emph{monotone grid classes}, of the form $\Grid(\M)$ for a matrix $\M$ whose entries are restricted to $\emptyset$, $\Av(21)$, and $\Av(12)$.  When considering monotone grid classes we abbreviate these three classes to $0$, $1$, and $-1$ (respectively).  The class $\C$ is \emph{monotone griddable} if $\C$ lies in $\Grid(M)$ for some $\zpm$ matrix $M$.

To explain the relationship between monotone and geometric grid classes we need to introduce a graph.  The \emph{row-column graph} of a $t\times u$ matrix $M$ is the bipartite graph on the vertices $x_1$, $\dots$, $x_t$, $y_1$, $\dots$, $y_u$ where $x_k\sim y_\ell$ if and only if $M_{k,\ell}\neq 0$.  It can then be shown (see Albert, Atkinson, Bouvel, Ru\v{s}kuc, and Vatter~{\cite[Theorem 3.2]{albert:geometric-grid-:}}) that $\Geom(M)=\Grid(M)$ if and only if the row-column graph of $M$ is a forest.

In order to use grid classes to describe small permutation classes one needs the following generalisation of a result of Huczynska and Vatter~\cite{huczynska:grid-classes-an:}.

\begin{theorem}[Vatter~{\cite[Theorem 3.1]{vatter:small-permutati:}}]
\label{gridding-characterization}
A permutation class is $\D$-griddable if and only if it does not contain arbitrarily long sums or skew sums of basis elements of $\D$.
\end{theorem}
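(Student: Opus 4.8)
The statement splits into two implications of rather different difficulty. For the ``only if'' direction --- that a $\D$-griddable class contains no arbitrarily long sums or skew sums of basis elements of $\D$ --- the plan is a short pigeonhole argument on a single gridding. Suppose $\C\subseteq\Grid(\M)$, where $\M$ is the $t\times u$ matrix each of whose entries equals $\D$; let $B$ be the basis of $\D$ and suppose $\gamma=\beta_1\oplus\cdots\oplus\beta_N$ lies in $\C$ with every $\beta_i\in B$. Fix an $\M$-gridding of $\gamma$. Because $\gamma$ is a direct sum, every entry of $\beta_{i+1}$ lies strictly above and to the right of every entry of $\beta_i$, so the indices of the leftmost and rightmost columns met by $\beta_i$, and of the lowest and highest rows met by $\beta_i$, are weakly increasing in $i$. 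If some $\beta_i$ were confined to a single cell it would be order isomorphic to a subpermutation of a member of $\D$, hence would itself lie in $\D$, contradicting $\beta_i\in B$; so every $\beta_i$ crosses a column boundary or a row boundary. But the total ``column spread'' of the blocks is at most $t-1$ and the total ``row spread'' at most $u-1$, forcing $N\le t+u-2$. The skew-sum case is identical with the vertical order reversed. Thus no $\D$-griddable class contains arbitrarily long sums or skew sums of basis elements of $\D$.

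The ``if'' direction is the substance of the theorem. Assume $\C$ contains no $p$-fold sum $\beta_1\oplus\cdots\oplus\beta_p$ and no $p$-fold skew sum of elements of $B$, and aim to produce a fixed $K\times K$ matrix with every cell equal to $\D$ that contains $\C$. Two bookkeeping facts are immediate from the hypothesis: at most $p-1$ of the sum components of any $\pi\in\C$ can fail to lie in $\D$ (otherwise the basis elements they contain would stack into a $p$-fold sum of $B$-elements), and dually for skew components. One therefore peels these boundedly many ``bad'' components off the sum (resp.\ skew) decomposition of $\pi$ and treats them recursively, so that what remains are runs built from sum- or skew-indecomposable members of $\D$. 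The aim is to show that all of this fits on a grid whose number of cells is bounded independently of $\pi$, each cell being a subpermutation of a member of $\D$. Granting this --- and granting that the empty permutation class is a subclass of $\D$, so that $\Grid$ is monotone in its cell entries and empty cells may be replaced by $\D$ --- one concludes $\C\subseteq\Grid(\M)$ for the $K\times K$ all-$\D$ matrix $\M$, i.e.\ $\C$ is $\D$-griddable.

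The hard part, and the step I expect to be the main obstacle, is exactly the passage from these one-dimensional bounds to a uniform two-dimensional gridding. The recursive peeling above does not terminate by itself: a ``bad'' component can be a large simple permutation, and large simple permutations genuinely occur inside $\D$-griddable classes, so one cannot reduce to pieces lying in $\D$; likewise a long run of indecomposable members of $\D$ need not fit into boundedly many $\D$-cells without further work. The hypothesis must be used to show directly that every simple permutation of $\C$, and every such run, admits a gridding into boundedly many $\D$-cells --- converting bounded lengths of sums and skew sums of basis elements (and of their monotone-like relatives) into a uniform bound on the number of grid cells. This is the argument carried out by Huczynska and Vatter in the monotone case and extended by Vatter to arbitrary $\D$, most naturally either by reducing to the monotone griddability theorem or by an extremal/Ramsey-type argument on the plots of permutations. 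Once the constant $K$ is in hand, the assembly into a single $K\times K$ grid and the final conclusion follow by the bookkeeping sketched above.
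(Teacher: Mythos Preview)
The paper does not prove this theorem; it is quoted from Vatter's earlier work and stated without proof, so there is no in-paper argument to compare against.

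On its own merits: your ``only if'' direction is correct. The pigeonhole on column and row spreads is the right idea, and the inequalities $a_{i+1}\ge b_i$, $c_{i+1}\ge d_i$ together with $\sum(b_i-a_i)\le t-1$, $\sum(d_i-c_i)\le u-1$ yield $N\le t+u-2$ as you claim.

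Your ``if'' direction, however, is not a proof but an outline with a self-diagnosed hole. You correctly extract the one-dimensional consequence (at most $p-1$ sum components of any $\pi\in\C$ lie outside $\D$, and dually), and you correctly identify that the substance is turning this into a uniform two-dimensional bound on the grid size. But you do not carry it out: you note yourself that the recursive peeling fails to terminate (a bad component may be simple, with no further sum/skew decomposition), and that a run of $\D$-indecomposables need not sit in boundedly many $\D$-cells. The closing sentence deferring to Huczynska--Vatter and Vatter is a citation, not an argument. So as written you have established the easy implication and acknowledged, rather than filled, the gap in the hard one.
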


We now introduce a specific class.  The \emph{increasing oscillating sequence} is the infinite sequence defined by
\[
4,1,6,3,8,5,\dots,2k+2,2k-1,\dots
\]
(which contains every positive integer except $2$).  An {\it increasing oscillation\/} is any sum indecomposable permutation that is contained in the increasing oscillating sequence (this term dates back to at least Pratt~\cite{pratt:computing-permu:}).  A \emph{decreasing oscillation} is the reverse of an increasing oscillation, and collectively these permutations are called \emph{oscillations}.


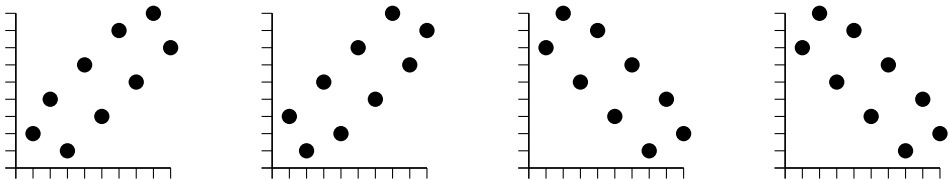
\begin{figure}
\begin{center}
\begin{tabular}{ccccccc}

\psset{xunit=0.009in, yunit=0.009in}
\psset{linewidth=0.005in}
\begin{pspicture}(0,0)(90,90)
\psaxes[dy=10,Dy=1,dx=10,Dx=1,tickstyle=bottom,showorigin=false,labels=none](0,0)(90,90)
\pscircle*(10,20){0.04in}
\pscircle*(20,40){0.04in}
\pscircle*(30,10){0.04in}
\pscircle*(40,60){0.04in}
\pscircle*(50,30){0.04in}
\pscircle*(60,80){0.04in}
\pscircle*(70,50){0.04in}
\pscircle*(80,90){0.04in}
\pscircle*(90,70){0.04in}
\end{pspicture}

&\rule{0.2in}{0pt}&

\psset{xunit=0.009in, yunit=0.009in}
\psset{linewidth=0.005in}
\begin{pspicture}(0,0)(90,90)
\psaxes[dy=10,Dy=1,dx=10,Dx=1,tickstyle=bottom,showorigin=false,labels=none](0,0)(90,90)
\pscircle*(10,30){0.04in}
\pscircle*(20,10){0.04in}
\pscircle*(30,50){0.04in}
\pscircle*(40,20){0.04in}
\pscircle*(50,70){0.04in}
\pscircle*(60,40){0.04in}
\pscircle*(70,90){0.04in}
\pscircle*(80,60){0.04in}
\pscircle*(90,80){0.04in}
\end{pspicture}

&\rule{0.2in}{0pt}&

\psset{xunit=0.009in, yunit=0.009in}
\psset{linewidth=0.005in}
\begin{pspicture}(0,0)(90,90)
\psaxes[dy=10,Dy=1,dx=10,Dx=1,tickstyle=bottom,showorigin=false,labels=none](0,0)(90,90)
\pscircle*(10,70){0.04in}
\pscircle*(20,90){0.04in}
\pscircle*(30,50){0.04in}
\pscircle*(40,80){0.04in}
\pscircle*(50,30){0.04in}
\pscircle*(60,60){0.04in}
\pscircle*(70,10){0.04in}
\pscircle*(80,40){0.04in}
\pscircle*(90,20){0.04in}
\end{pspicture}

&\rule{0.2in}{0pt}&

\psset{xunit=0.009in, yunit=0.009in}
\psset{linewidth=0.005in}
\begin{pspicture}(0,0)(90,90)
\psaxes[dy=10,Dy=1,dx=10,Dx=1,tickstyle=bottom,showorigin=false,labels=none](0,0)(90,90)
\pscircle*(10,70){0.04in}
\pscircle*(20,90){0.04in}
\pscircle*(30,50){0.04in}
\pscircle*(40,80){0.04in}
\pscircle*(50,30){0.04in}
\pscircle*(60,60){0.04in}
\pscircle*(70,10){0.04in}
\pscircle*(80,40){0.04in}
\pscircle*(90,20){0.04in}
\end{pspicture}

\end{tabular}
\end{center}
\caption{The four oscillations of length $9$.}\label{fig-osc}
\end{figure}

We let $\O$ denote the \emph{downward closure} of the set of (increasing and decreasing) oscillations; in other words, $\O$ consists of 
all oscillations and their subpermutations.  Further let $\O_k$ denote the downward closure of the set of oscillations of length at most $k$;
this is a finite class.  Using Theorem~\ref{gridding-characterization} and Schmerl and Trotter's Theorem~\ref{thm-schmerl-trotter}, Vatter~\cite{vatter:small-permutati:} showed via a computational argument that every small permutation class is $\langle\O\rangle$-griddable.

In fact, a much stronger result holds.  It can be shown that the growth rate of the downward closure of the set of increasing oscillations is precisely equal to $\kappa$.  Therefore, if $\C$ contains all increasing oscillations, it is not small.  Moreover, the increasing oscillations `almost' form a chain, and so if a class does not contain one increasing oscillation, there is a bound on the length of the increasing oscillations it can contain.  By symmetry, every small permutation class also has a bound on the length of decreasing oscillations it can contain, and thus every small permutation class is actually $\langle\O_k\rangle$-griddable for some integer $k$.

Because classes containing permutations with complicated substitution decompositions can be shown to have growth rates greater than $\kappa$ (via another computational argument), we can say more about the griddability of small permutation classes.  First, define the class $\tilde{\O}_k$ by
\[
\tilde{\O}_k=\O_k\cup\Av(21)\cup\Av(12),
\]
i.e., the oscillations of length at most $k$ together with the monotone permutations of all lengths.  Via a minor translation in notation, 
and recalling the $\C^{[d]}$ construction from Proposition \ref{prop-subst-completion-const}, we quote the following result.

\begin{theorem}[Vatter~{\cite[Theorems 4.3 and 4.4]{vatter:small-permutati:}}]
\label{thm-subst-gridding-main}
Every small permutation class is $\tilde{\O}_k^{[d]}$-griddable for some choice of integers $k$ and $d$.
\end{theorem}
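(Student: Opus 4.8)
Since Theorem~\ref{thm-subst-gridding-main} is quoted from Vatter's paper, the plan is to reconstruct its proof from the tools assembled above by peeling off the structure of a small class $\C$ in three stages.  Each stage applies the griddability criterion of Theorem~\ref{gridding-characterization}, and whenever griddability fails we must exhibit a subclass of $\C$ whose growth rate is at least $\kappa$, contradicting the hypothesis that $\C$ is small.

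\textbf{Stage 1: $\langle\O\rangle$-griddability.}  By Theorem~\ref{gridding-characterization}, $\C$ fails to be $\langle\O\rangle$-griddable only if it contains arbitrarily long sums, or arbitrarily long skew sums, $\beta_1\oplus\cdots\oplus\beta_m$ of basis elements of $\langle\O\rangle$.  First I would identify those basis elements: by Proposition~\ref{substitution-completion-basis} they are the minimal simple permutations outside $\O$, and repeated use of Schmerl and Trotter's Theorem~\ref{thm-schmerl-trotter} shows that a simple permutation avoiding all the short ones is forced into an oscillation- or parallel-alternation shape.  Consequently a class realising arbitrarily long sums of such $\beta_i$ contains a concrete infinite family of sums of ever-longer oscillations; its downward closure can be enumerated directly --- this is the ``computational argument'' referred to in the text --- and shown to have growth rate at least $\kappa$.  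Hence a small $\C$ must be $\langle\O\rangle$-griddable.

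\textbf{Stage 2: bounding the oscillation length $k$.}  The downward closure of the increasing oscillating sequence has growth rate exactly $\kappa$, so $\C$ cannot contain every increasing oscillation; since the increasing oscillations of a fixed parity are linearly ordered by containment apart from a bounded amount of branching, omitting one caps the length of every increasing oscillation in $\C$ by some integer $k$, and the reverse symmetry gives the same cap for decreasing oscillations.  Now by Proposition~\ref{simples-in-substitution-completion} the classes $\langle\O\rangle$ and $\O$ have the same simple permutations, so a member of $\langle\O\rangle$ all of whose simple subpermutations are oscillations of length at most $k$ already lies in $\langle\O_k\rangle$; applying this to each cell of the grid underlying Stage~1 for each permutation of $\C$ (each such cell content being a subpermutation of an element of $\C$, hence avoiding oscillations longer than $k$) upgrades Stage~1 to: $\C$ is $\langle\O_k\rangle$-griddable.

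\textbf{Stage 3: bounding the substitution depth $d$.}  Because $12,21\in\O_k$, the substitution closure $\langle\O_k\rangle$ contains every monotone permutation, so $\langle\O_k\rangle=\langle\tilde{\O}_k\rangle=\bigcup_{d\ge0}\tilde{\O}_k^{[d]}$ by Proposition~\ref{prop-subst-completion-const}.  Thus the cells of the gridding from Stage~2 are classes of the form $\bigcup_d\tilde{\O}_k^{[d]}$, and it remains only to find one depth $d$ that works for all cells simultaneously.  If no uniform $d$ existed, then $\C$ would contain permutations whose substitution trees over $\tilde{\O}_k$ have unbounded depth --- genuinely nested inflations of oscillations by oscillations that cannot be flattened into the monotone classes already absorbed into $\tilde{\O}_k$.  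A second computational argument shows that such permutations already generate a subclass of growth rate exceeding $\kappa$, contradicting smallness; so a uniform $d$ exists and $\C$ is $\tilde{\O}_k^{[d]}$-griddable.

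\textbf{Main obstacle.}  The substantive content of every stage is the quantitative growth-rate estimate: one must show that each way of escaping the relevant griddability --- arbitrarily long sums of long oscillations, containing all increasing oscillations, unbounded genuine substitution depth --- forces the growth rate to reach or pass $\kappa$.  This requires isolating a finite list of ``minimal obstruction'' families, together with their subpermutations, and computing or tightly bounding their growth rates (most naturally with generating-function machinery or an insertion-type encoding), and it is precisely here that the algebraic integer $\kappa$, the real root of $x^3-2x^2-1$, is forced to surface.
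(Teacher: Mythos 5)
Your outline matches the paper's treatment: this theorem is quoted from Vatter (the paper gives no proof of its own), and the surrounding discussion sketches precisely the three stages you describe --- $\langle\O\rangle$-griddability via Theorem~\ref{gridding-characterization} together with Schmerl--Trotter and a computational growth-rate argument, then bounding $k$ because the closure of the increasing oscillations has growth rate exactly $\kappa$ and the oscillations nearly form a chain, then bounding the substitution depth $d$ by a second computational argument about complicated substitution decompositions. Like the paper, you defer the substantive quantitative estimates (the ``computational arguments'' in Vatter's paper) rather than supplying them, so your proposal is a faithful reconstruction of the same route rather than an independent proof.
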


The restriction to $\tilde{\O}_k^{[d]}$-griddings is important for two reasons.  The first is that these classes are strongly rational.  
Indeed, by iterating Theorem~\ref{thm-geom-inflate-enum}, we obtain the following.

\begin{corollary}
\label{cor-bounded-depth-strong-rat}
If the class $\C$ is geometrically griddable, then the class $\C^{[d]}$ is strongly rational for every $d$.
\end{corollary}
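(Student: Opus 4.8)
The plan is to induct on $d$, using Theorem~\ref{thm-geom-inflate-enum} for the inductive step and the recursive description $\C^{[0]}=\{1\}$, $\C^{[i+1]}=\C[\C^{[i]}]$ from Proposition~\ref{prop-subst-completion-const}. For the base case $d=0$ we have $\C^{[0]}=\{1\}$, which is finite; every finite class is strongly rational, since all of its subclasses are finite and thus have polynomial (in particular rational) generating functions. (Alternatively one could start the induction at $d=1$, noting that $\C^{[1]}=\C[\{1\}]=\C$ is strongly rational because geometrically griddable classes are, by Theorem~\ref{thm-geom-griddable-all}.)

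For the inductive step, suppose that $\C^{[d]}$ is strongly rational. By the recursion we have $\C^{[d+1]}=\C[\C^{[d]}]$. Since $\C$ is geometrically griddable and $\C^{[d]}$ is strongly rational, Theorem~\ref{thm-geom-inflate-enum} applies directly with $\U=\C^{[d]}$, and yields that $\C[\C^{[d]}]=\C^{[d+1]}$ is strongly rational. This completes the induction and hence the proof.

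There is essentially no obstacle here; the entire content is carried by Theorem~\ref{thm-geom-inflate-enum}. The only point that needs a moment's attention is that the hypotheses line up at each stage: the \emph{outer} class stays the fixed geometrically griddable class $\C$, while at level $d+1$ the \emph{inner} class is $\C^{[d]}$, which the induction hypothesis has already certified as strongly rational, so it legitimately plays the role of the strongly rational class $\U$ in Theorem~\ref{thm-geom-inflate-enum}. Beyond confirming the finite (hence strongly rational) base case, no further work is required.
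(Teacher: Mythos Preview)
Your proof is correct and matches the paper's approach exactly: the paper simply says the corollary follows ``by iterating Theorem~\ref{thm-geom-inflate-enum}'', and your induction on $d$ via $\C^{[d+1]}=\C[\C^{[d]}]$ is precisely that iteration spelled out.
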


Clearly $\tilde{\O}_k$, which contains only finitely many nonmonotone permutations, is  geometrically griddable.  Therefore Corollary~\ref{cor-bounded-depth-strong-rat} implies that $\tilde{\O}_k^{[d]}$ is strongly rational for all $d$ and $k$.

The second benefit of the restriction to $\tilde{\O}_k^{[d]}$-griddings is that, because $\tilde{\O}_k^{[d]}$ contains neither long simple permutations nor complicated substitution decompositions, a quite technical argument allows us to `slice' the $\tilde{\O}_k^{[d]}$-griddings of small permutation classes, as formalised below.

\begin{theorem}[Vatter~{\cite[Theorem 5.4]{vatter:small-permutati:}}]
\label{small-classes-gridding}
Every small permutation class is $M$-griddable for a matrix $M$ in which:
\begin{enumerate}
\item[(S1)] every entry is $\tilde{\O}_k^{[d]}$, $\Av(21)$, $\Av(12)$, or the empty set;
\item[(S2)] every entry equal to $\tilde{\O}_k^{[d]}$ is the unique nonempty entry in its row and column; and
\item[(S3)] if two nonempty entries share a row or a column with each other (in which case they both must be monotone by (S2)), then neither shares a row or column with any other nonempty entry.
\end{enumerate}
\end{theorem}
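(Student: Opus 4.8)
The plan is to begin from the $\tilde{\O}_k^{[d]}$-gridding furnished by Theorem~\ref{thm-subst-gridding-main} and to \emph{refine} it repeatedly---subdividing rows and columns, and hence cells---until conditions (S1)--(S3) hold. Fix $k$ and $d$ with $\C\subseteq\Grid(\M)$, every entry of $\M$ being $\tilde{\O}_k^{[d]}$. For each cell $(a,b)$ of $\M$ let $\C_{ab}$ denote the class of all permutations order isomorphic to the content of cell $(a,b)$ in some $\M$-gridding of some member of $\C$; then $\C_{ab}$ is a subclass of $\tilde{\O}_k^{[d]}$, and since each of its elements embeds into a member of $\C$, the class $\C_{ab}$ is itself small. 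The refined matrix $M$ will be produced by replacing each cell $(a,b)$ of $\M$ by a suitable bounded matrix that grids $\C_{ab}$, and then cleaning up.

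\textbf{The structural lemma.} The core of the argument is the following claim about small subclasses of $\tilde{\O}_k^{[d]}$: there is an integer $N_0=N_0(k,d)$ so that every small $\X\subseteq\tilde{\O}_k^{[d]}$ is $\N$-griddable for some matrix $\N$ of dimensions at most $N_0\times N_0$ in which every entry is $\emptyset$, $\Av(21)$, $\Av(12)$, or $\tilde{\O}_k^{[d]}$, at most one entry equals $\tilde{\O}_k^{[d]}$, and when such an entry is present it is the unique nonempty entry of its row and of its column, the remaining nonempty entries being monotone and already arranged so as to satisfy (S3) inside $\N$. I would prove this by analysing the substitution structure of members of $\tilde{\O}_k^{[d]}$: each such permutation is a bounded-length simple (or monotone) skeleton inflated by components of the same form, so any unbounded growth in length is forced through \emph{monotone} inflations, and hence a member of $\X$ that is not essentially monotone must carry one of finitely many bounded ``non-monotone cores''. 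The smallness of $\X$ then prevents it from containing arbitrarily long sums or skew sums of such cores---otherwise, via Theorem~\ref{gridding-characterization} together with the fact that the downward closure of the increasing oscillations has growth rate $\kappa$, one would reach or exceed growth rate $\kappa$---so all of the non-monotone behaviour of $\X$ can be confined to a single bounded cell; likewise, smallness keeps the monotone part from forming a large two-dimensional block, so it can be laid out in a forest-like, and therefore (S3)-compliant, pattern.

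\textbf{Reassembly.} Granting the lemma, I would proceed in three steps. First, replace each cell $(a,b)$ of $\M$ by its matrix $\N_{ab}$, inserting the corresponding new column and row divisions into $\M$ globally; this immediately yields (S1). Second, observe that if, after this substitution, two entries equal to $\tilde{\O}_k^{[d]}$ share a row or a column, then the two non-monotone cells of $\C$ from which they arose lie in a common horizontal or vertical strip of $\M$; cutting $\M$ between those two cells and re-applying the lemma strictly decreases the number of such conflicts, so iterating finitely often establishes (S2). Third, run a last round of row/column splitting on the monotone cells to destroy any configuration of three or more monotone cells that violates (S3); such splittings never affect the $\tilde{\O}_k^{[d]}$-entries, since by (S2) those occupy private rows and columns, so (S1) and (S2) are preserved and (S3) is achieved.

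\textbf{Main obstacle.} The difficulty lies entirely in the structural lemma: one must leverage both the bound on the length of the simple permutations inside $\tilde{\O}_k^{[d]}$ and the growth-rate threshold $\kappa$ to show that \emph{every} small subclass of $\tilde{\O}_k^{[d]}$---not merely finitely based ones, and without a priori knowledge of its basis---localises its non-monotone content to one bounded cell and its monotone content to a forest-shaped skeleton, with bounds uniform in $k$ and $d$. This is precisely the slicing argument of Vatter~\cite{vatter:small-permutati:}, built on the monotone griddability criterion of Huczynska and Vatter~\cite{huczynska:grid-classes-an:}; once it is in place, the surgery above amounts to routine bookkeeping about how row and column divisions compose.
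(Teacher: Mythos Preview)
The paper does not prove this theorem; it is quoted from Vatter~\cite{vatter:small-permutati:} as a black box and no argument is supplied here, so there is no ``paper's own proof'' to compare against.

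Your sketch has the right overall shape and you correctly locate the real content in the structural lemma (which you yourself identify with Vatter's slicing argument).  The gap is in the reassembly.  Refining each cell class $\C_{ab}$ in isolation produces a matrix $\N_{ab}$, but to turn this into a gridding of $\C$ you must push every new column line of $\N_{ab}$ through \emph{all} cells $(a,b')$ in the same column of $\M$, and similarly for rows.  Those other cells then get sliced at positions dictated by $\N_{ab}$, not by $\N_{ab'}$, so there is no reason their contents land in the pattern your lemma promised for $\C_{ab'}$; in general each sub-cell is only known to lie in $\tilde{\O}_k^{[d]}$, which re-creates exactly the situation you started from and immediately violates (S2).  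Your step~2 fix---``cutting $\M$ between those two cells and re-applying the lemma strictly decreases the number of such conflicts''---does not work as stated: inserting a grid line between two cells does not place their $\tilde{\O}_k^{[d]}$-entries in different rows, and each re-application of the lemma may spawn new $\tilde{\O}_k^{[d]}$-entries elsewhere, so no monovariant is exhibited and termination is unproved.  The argument in \cite{vatter:small-permutati:} avoids this by working globally from the start: it uses the growth-rate hypothesis to bound, up front, the interactions between non-monotone pieces \emph{across} cells, rather than gridding cells independently and trying to reconcile the results afterward.
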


Condition (S2) shows that every small permutation class is $M$-griddable for a matrix $M$ in which every pair of `interacting' cells is monotone.  Therefore, if $M$ contains any nonmonotone cells, we may as well view them as inflations of a singleton cell, or indeed, of any type of monotone cell at all.  We can express this consequence of Theorem~\ref{small-classes-gridding} in the language of monotone grid classes by saying that every small permutation class is contained in $\Grid(M)[\tilde{\O}_k^{[d]}]$ for some $\zpm$ matrix $M$ and integers $k$ and $d$.  Furthermore, condition (S3) implies that this matrix $M$ can be taken so that its row-column graph is a forest, so $\Grid(M)=\Geom(M)$.  Therefore we see that, for every small permutation class $\C$, there is a $\zpm$ matrix $M$ and integers $k$ and $d$ such that
\[
\C\subseteq\Geom(M)[\tilde{\O}_k^{[d]}].
\]
From here, we need only apply Theorem~\ref{thm-geom-inflate-enum} to establish the desired result.

\begin{theorem}
\label{thm-small-rational}
All small permutation classes have rational generating functions.
\end{theorem}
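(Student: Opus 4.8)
The plan is to assemble machinery that is already in place; the statement then follows with almost no further work. Let $\C$ be a small permutation class. By Theorem~\ref{small-classes-gridding}, $\C$ is $M$-griddable for some matrix $M$ satisfying conditions (S1)--(S3). First I would reinterpret this gridding. Condition (S2) says that each cell equal to $\tilde{\O}_k^{[d]}$ is the only nonempty cell in its row and column, so such a cell influences the griddings only through inflation and may be replaced by a monotone (indeed, a singleton) skeleton cell without altering the set of permutations that can be obtained; condition (S3) then guarantees that, after this replacement, the row-column graph of the resulting $\zpm$ matrix $M'$ is a forest. Hence $\C\subseteq\Grid(M')[\tilde{\O}_k^{[d]}]$, and since the row-column graph of $M'$ is a forest we have $\Grid(M')=\Geom(M')$ by \cite[Theorem~3.2]{albert:geometric-grid-:}. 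Thus $\C\subseteq\Geom(M')[\tilde{\O}_k^{[d]}]$ for some $\zpm$ matrix $M'$ and integers $k,d$.

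Next I would check that $\Geom(M')[\tilde{\O}_k^{[d]}]$ is strongly rational. The class $\tilde{\O}_k$ consists of the finitely many oscillations of length at most $k$ together with the two monotone classes $\Av(21)$ and $\Av(12)$, so it is geometrically griddable; therefore $\tilde{\O}_k^{[d]}$ is strongly rational by Corollary~\ref{cor-bounded-depth-strong-rat}. Since $\Geom(M')$ is geometrically griddable, Theorem~\ref{thm-geom-inflate-enum} applies with the geometrically griddable class taken to be $\Geom(M')$ and the strongly rational class taken to be $\tilde{\O}_k^{[d]}$, yielding that $\Geom(M')[\tilde{\O}_k^{[d]}]$ is strongly rational.

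Finally, strong rationality passes to subclasses by definition, so $\C$, being a subclass of $\Geom(M')[\tilde{\O}_k^{[d]}]$, has a rational generating function, which is the assertion of the theorem.

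The genuine difficulty in this result does not lie in the assembly above, which is essentially bookkeeping, but in its two inputs: the structural theorems of Vatter quoted as Theorems~\ref{thm-subst-gridding-main} and~\ref{small-classes-gridding}, whose proofs rest on the computational arguments bounding both the length of the oscillations and the depth of the substitution decompositions occurring in small classes, and Theorem~\ref{thm-geom-inflate-enum}, whose proof occupies the bulk of this paper. Within the present argument, the only point that demands any care is the translation in the first paragraph --- verifying that an $M$-gridding satisfying (S1)--(S3) can legitimately be rewritten as an inflation $\Geom(M')[\tilde{\O}_k^{[d]}]$ with $M'$ having a forest row-column graph --- but this is exactly the consequence of conditions (S2) and (S3) that the discussion preceding the theorem already spells out.
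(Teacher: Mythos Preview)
Your proposal is correct and follows essentially the same route as the paper: invoke Theorem~\ref{small-classes-gridding}, use (S2) and (S3) to rewrite the gridding as $\C\subseteq\Geom(M')[\tilde{\O}_k^{[d]}]$ with $M'$ having a forest row-column graph, observe via Corollary~\ref{cor-bounded-depth-strong-rat} that $\tilde{\O}_k^{[d]}$ is strongly rational, and conclude by Theorem~\ref{thm-geom-inflate-enum}. The paper's argument is exactly this, presented in the paragraphs immediately preceding the theorem statement.
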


\section{Conclusion}
\label{sec-conclusion}

We have extended the substitution decomposition to handle enumeration far beyond the initial investigations of Albert and Atkinson~\cite{albert:simple-permutat:}, to the point where these techniques apply to all permutation classes of growth rate less than $\kappa\approx 2.20557$.  Still, it is worth reflecting on how difficult the enumeration of permutation classes remains.  Over fifteen years ago Noonan and Zeilberger~\cite{noonan:the-enumeration:} suggested that every finitely based permutation class has a holonomic generating function.  Roughly ten years after that, Zeilberger (see Elder and Vatter~\cite{elder:problems-and-co:}) conjectured precisely the opposite, in fact specifying a potential counterexample: he speculated that $\Av(1324)$ might not have a holonomic generating function.

Perhaps even if the generating functions of permutation classes are not well behaved, their growth rates might be.  Balogh, Bollob\'as, and Morris~\cite{balogh:hereditary-prop:ordgraphs} were overly optimistic in this direction: they made a conjecture whose truth would have implied that all growth rates of permutation classes are algebraic numbers, which was disproved by Albert and Linton~\cite{albert:growing-at-a-pe:} (and even more starkly by Vatter~\cite{vatter:permutation-cla}).  However, Klazar~\cite{klazar:overview-of-som} has suggested that their conjecture might be true for all finitely based classes.

Moving from general concerns to more local matters, throughout this work we have routinely required geometric griddability as a hypotheses, and it is natural to ask if this can be replaced by the weaker condition of strong rationality.  In general the answer is no, and essentially all attempts are thwarted by a particular strongly rational class.  We feel it might be edifying to ponder this class and what extensions of our results it \emph{does not} rule out, so we describe it in some detail.

To begin with we need the \emph{increasing oscillating antichain}.  To construct this antichain, take the set of increasing oscillations (oriented as on the far left of Figure~\ref{fig-osc}) of odd length at least three and `anchor' the two ends of the increasing oscillations by inflating the first and the greatest entry of each by the permutation $12$.  Thus the first element of the antichain is $231[12,12,1]=23451$, while the fourth element is
\[
241638597[12,1,1,1,1,1,1,12,1]=2\ 3\ 5\ 1\ 7\ 4\ 9\ 6\ 10\ 11\ 8,
\]
shown on the left of Figure~\ref{fig-u4} (which also gives a sketch of the proof that it \emph{is} an antichain; numerous formal proofs exist elsewhere).  Let $A$ denote this antichain.

\begin{figure}
\begin{center}
\begin{tabular}{ccc}
\psset{xunit=0.009in, yunit=0.009in}
\begin{pspicture}(0,0)(110,110)
\psaxes[dy=10,Dy=1,dx=10,Dx=1,tickstyle=bottom,showorigin=false,labels=none](0,0)(110,110)
\psframe[linecolor=darkgray,fillstyle=solid,fillcolor=lightgray,linewidth=0.02in](7,17)(23,33)
\psframe[linecolor=darkgray,fillstyle=solid,fillcolor=lightgray,linewidth=0.02in](87,97)(103,113)
\pscircle*(10,20){0.04in}
\pscircle*(20,30){0.04in}
\pscircle*(30,50){0.04in}
\pscircle*(40,10){0.04in}
\pscircle*(50,70){0.04in}
\pscircle*(60,40){0.04in}
\pscircle*(70,90){0.04in}
\pscircle*(80,60){0.04in}
\pscircle*(90,100){0.04in}
\pscircle*(100,110){0.04in}
\pscircle*(110,80){0.04in}
\end{pspicture}

&\rule{0.2in}{0pt}&

\psset{xunit=0.01in, yunit=0.01in}
\begin{pspicture}(5.8578644,0)(174.142136,110)
\pscircle*(15.8578644,69.1421356){0.04in}
\pscircle*(15.8578644,40.8578644){0.04in}
\pscircle*(30,55){0.04in}
\pscircle*(50,55){0.04in}
\pscircle*(70,55){0.04in}
\pscircle*(90,55){0.04in}
\pscircle*(110,55){0.04in}
\pscircle*(130,55){0.04in}
\pscircle*(150,55){0.04in}
\pscircle*(164.142136,69.1421356){0.04in}
\pscircle*(164.142136,40.8578644){0.04in}
\psline(15.8578644,69.1421356)(30,55)
\psline(15.8578644,40.8578644)(30,55)
\psline(30,55)(150,55)
\psline(150,55)(164.142136,69.1421356)
\psline(150,55)(164.142136,40.8578644)
\end{pspicture}
\\\\
\end{tabular}
\caption{On the left, a member of the infinite antichain $A$.  It is easiest to see that $A$ forms an antichain by considering the inversion graphs (or, permutation graphs) of its members (right), which form an infinite antichain of graphs under the induced subgraph order.}
\label{fig-u4}
\end{center}
\end{figure}
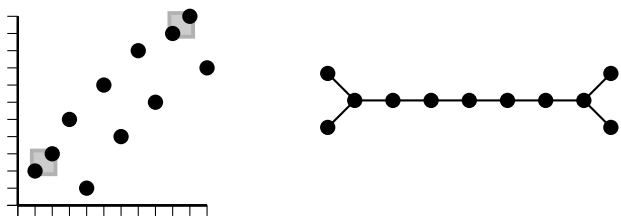

By definition we see that $A\subseteq\O[\{1,12\}]$.  Moreover, $\O$ can be seen to be strongly rational in several ways.  Perhaps the most systematic method is to consider the rank encodings of Albert, Atkinson, and Ru\v{s}kuc~\cite{albert:regular-closed-:}; in this encoding the class $\O$ and all its subclasses are in bijection with regular languages.

The class $\O[\{1,12\}]$ contains $A$ and thus is not pwo.  As observed in Proposition~\ref{prop-strong-rat-pwo}, this implies that $\O[\{1,12\}]$ is not strongly rational, and this fact dooms all naive generalisations of our results.  However, the rank encoding can be used to show that every \emph{finitely based} subclass of $\O[\{1,12\}]$ has a rational generating function, so one might optimistically hope that the following question has a positive answer.

\begin{question}
\label{ques-strong-rat-completion-enum}
If $\C$ and $\U$ are both strongly rational classes, does every finitely based subclass of $\langle\C\rangle$ (resp., $\C[\U]$) have an algebraic (resp., a rational) generating function?
\end{question}

As stated above, the conclusion about rationality holds for $\C=\O$ and $\U=\{1,12\}$.  It may be enlightening to answer this question in the special case where $\C=\O$ and $\U$ is an arbitrary strongly rational class.

Answering Question~\ref{ques-strong-rat-completion-enum} in general would almost surely require a greater understanding of the simple permutations in strongly rational classes.  Although Proposition~\ref{prop-strong-rat-indecomps} gives us a very good idea of the structure and enumeration of sum indecomposable permutations in a strongly rational class, its simple permutation analogue is still open:

\begin{conjecture}
If the class $\C$ is strongly rational, then the simple permutations in $\C$ have a rational generating function.
\end{conjecture}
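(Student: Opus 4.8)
The plan is to reduce to a structural question about the simple permutations themselves and then feed that structure into the enumerative machinery of Sections~\ref{sec-frameworks}--\ref{sec-alg-gf}, using Proposition~\ref{prop-strong-rat-indecomps} as a template (it settles the analogous question for sum- and skew-indecomposable permutations). A harmless first normalisation: let $\C_S$ be the downward closure of the set of simple permutations of $\C$. Since $\C$ is a downset, $\C_S$ is a subclass of $\C$, hence strongly rational, and its simple permutations are exactly those of $\C$; so we may assume $\C=\C_S$ is generated as a downset by its simples. By Proposition~\ref{simples-in-substitution-completion} this data also coincides with the simple permutations of $\langle\C\rangle$. Crucially, however, $\langle\C\rangle$ need \emph{not} be strongly rational --- already $\langle\O\rangle$ contains the infinite antichain $A$ of Section~\ref{sec-conclusion}, so it is not even pwo --- which is exactly why we cannot simply import a structure theorem for $\langle\C\rangle$ from earlier in the paper.

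Granting a suitable description of the simples, the enumeration would proceed in the style of Sections~\ref{sec-frameworks}--\ref{sec-alg-gf}. Fixing a finite set $B$ of relevant avoidance data (obtained from the \emph{relative} bases of the subclasses involved, which are finite because a strongly rational class is pwo by Proposition~\ref{prop-strong-rat-pwo}), one would sort the permutations of $\C$ with a given simple skeleton $\sigma$ by the tuple $\bigl(\P_B(\alpha_1),\dots,\P_B(\alpha_m)\bigr)$ of properties of their intervals; Proposition~\ref{prop-strong-rat-properties} together with inclusion--exclusion then supplies a rational generating function for the material filling each slot, so a regular (or merely rationally enumerable) encoding of the admissible skeletons and slot-labels would yield a rational generating function for the simples by the substitution recipe of Section~\ref{sec-alg-gf}. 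In the special case that $\C$ happens to be substitution closed and contains $12$ and $21$, one could instead try to exploit the classical functional equation
\[
F = x + \frac{2F^2}{1+F} + \hat{S}(F),
\qquad \hat{S}(y)=\sum_{n\ge4}s_n y^n,
\]
where $F$ is the (rational, by hypothesis) generating function of $\C$ and $s_n$ counts the simple permutations of $\C$ of length $n$; here $\hat{S}(F)=F-x-2F^2/(1+F)$ is rational, but recovering $\hat{S}$ itself requires composing with the compositional inverse of $F$, which is in general only algebraic --- so even this favourable case, and even the weaker conclusion of algebraicity, is not immediate, and a general $\C$ is of course not substitution closed.

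The main obstacle --- and the reason the conjecture remains open --- is precisely the ingredient that Theorem~\ref{thm-geom-griddable-all}(iv) provides for free when $\C$ is geometrically griddable: a regular description of the simple permutations of $\C$. A priori we know only that these simple permutations form a pwo set, being a subset of a strongly rational class, and pwo is much too weak, since there are pwo classes that are not geometrically griddable. I would therefore attack the conjecture by trying to prove a structure theorem --- that the simple permutations of a strongly rational class are geometrically griddable --- from which Theorem~\ref{thm-geom-griddable-all}(iv) would finish the proof at once (indeed supplying the regular encoding needed in the previous paragraph and thereby side-stepping the functional-equation difficulty). The natural route to such a theorem is to show that strong rationality forbids the configurations --- long monotone juxtapositions, and long sums or skew sums of obstructions, in the spirit of Theorem~\ref{gridding-characterization} --- that would force a non-forest, hence non-geometrically-griddable, grid among the simples. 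A sensible first test case, as flagged in the discussion of Question~\ref{ques-strong-rat-completion-enum}, is a class $\C$ that is $\O$-griddable, where the oscillatory shape of the simple permutations can be pinned down directly.
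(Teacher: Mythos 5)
The statement you are addressing is stated in the paper as an open conjecture; the paper offers no proof of it, and your text, quite rightly, does not claim to supply one either --- it is a research plan, and the plan's load-bearing step is left unproved. That step is where the proposal genuinely fails. You propose to prove that the simple permutations of a strongly rational class are geometrically griddable and then invoke Theorem~\ref{thm-geom-griddable-all}(iv). But this proposed structure theorem is false, and the paper itself contains the counterexample: take $\C=\O$, the downward closure of the oscillations, which Section~\ref{sec-conclusion} notes is strongly rational via the rank encoding. Its simple permutations are the simple oscillations, and their downward closure contains arbitrarily long direct sums of $21$ (in the increasing oscillating sequence $4,1,6,3,8,5,10,7,12,9,\dots$ the subsequence $4,1,8,5,12,9,\dots$ is order isomorphic to $21\oplus21\oplus21\oplus\cdots$), as well as arbitrarily long skew sums of $12$ on the decreasing side. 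By the Huczynska--Vatter criterion (the monotone specialisation of Theorem~\ref{gridding-characterization}), this set is not even monotone griddable, and since $\Geom(M)\subseteq\Grid(M)$ for every $\zpm$ matrix $M$, it cannot be geometrically griddable. So the route ``strong rationality $\Rightarrow$ simples geometrically griddable $\Rightarrow$ regular encoding $\Rightarrow$ rational generating function'' cannot work in general; note that the conjecture itself survives this example only because the simple oscillations happen to be enumerated by a trivially rational series, which shows that geometric griddability is sufficient but far from necessary, and that the true difficulty lies elsewhere.

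The remaining ingredients of your plan do not close this gap. The framework-and-slot argument in your second paragraph presupposes exactly what is missing, namely a regular (or otherwise effectively enumerable) description of the admissible skeletons; Proposition~\ref{prop-strong-rat-properties} only handles the interval fillings, and moreover it is not clear why the simple permutations of $\C$ should arise as skeletons of a framework system attached to $\C$ at all, since $\C$ need not be (and after your normalisation to the downward closure of its simples, typically is not) substitution closed. Your observation that a subset of a strongly rational class is pwo (Proposition~\ref{prop-strong-rat-pwo}) is correct but, as you say, far too weak. Finally, the functional-equation route via $F=x+2F^2/(1+F)+\hat{S}(F)$ applies only to substitution-closed classes containing $12$ and $21$, and even there recovering $\hat{S}$ requires composing with the compositional inverse of $F$, which does not preserve rationality (nor obviously algebraicity); you acknowledge this yourself. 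In short: no step of the proposal establishes the conjecture, and the one structural lemma that would have made the argument go through is refuted by $\O$.
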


\bigskip

\bibliographystyle{acm}
\bibliography{../refs}

\end{document}